\documentclass[11pt]{amsart}

\textwidth=6.5in
\hoffset  -.75in
\usepackage[arrow,matrix,curve,cmtip,ps]{xy}
\usepackage{amsmath,amssymb,amsfonts,amsthm,amscd}
\usepackage{enumerate}

\newtheorem{theorem}{Theorem}[section]
\newtheorem{proposition}[theorem]{Proposition}
\newtheorem{lemma}[theorem]{Lemma}
\newtheorem{corollary}[theorem]{Corollary} 
\theoremstyle{remark}
 
\theoremstyle{definition}
\newtheorem{definition}[theorem]{Definition}

\numberwithin{equation}{section} 
\newcommand{\ip}[2]{\left\langle #1,#2  \right\rangle}

\newcommand{\norm}[1]{\left \| #1\right\|}
\newcommand{\mnorm}[1]{\left \| \left [ #1\right ]\right \|}
\newcommand{\abs}[1]{\left \lvert #1 \right \lvert}

\newcommand{\Min}[1]{{\rm Min}(#1)}
\newcommand{\Max}[1]{{\rm Max}(#1)}
\newcommand{\Ran}[1]{{\rm Ran}(#1)}
\newcommand{\Ker}[1]{{\rm Ker}(#1)}
\newcommand{\Ball}[1]{{\rm Ball}(#1)}

\newtheorem*{xrem}{Remark}

\DeclareMathOperator{\iotimes}{\check{\otimes}}

\DeclareMathOperator{\Rdb}{{\mathbb R}}

\DeclareMathOperator{\Al}{{\mathcal{A}_{\ell}}}

\DeclareMathOperator{\Ml}{{\mathcal{M}_{\ell}}}

\begin{document}

\title{Real Operator Algebras and Real Completely Isometric Theory} 
\date{\today} 
\author{Sonia Sharma}
\address{Department of Mathematics, SUNY Cortland, Cortland, NY
  13045} \email[Sonia Sharma]{sonia.sharma@cortland.edu}
\begin{abstract}
This paper is a continuation of the program started by Ruan in \cite{RuaR} and \cite{RuaC}, of developing real operator space theory. In particular, we develop the theory of real operator algebras. 
 We also show among other things that the injective envelope, $C^*$-envelope and non-commutative Shilov boundary exist for a real 
operator space. We develop real one-sided $M$-ideal theory and characterize  one-sided $M$-ideals in real $C^*$-algebras and real operator algebras with contractive approximate identity. 
\end{abstract}

\maketitle
\section{Introduction}
In functional analysis, the underlying objects of study are vector spaces over a field, where the field is usually either the field of real numbers, $\mathbb{R}$, or the field of complex numbers, $\mathbb{C}$. The field of complex numbers has been preferred over real numbers, since the field of real numbers is a little more restrictive. 
For instance, every polynomial over the field of reals
has a roots in $\mathbb{C}$, but need not have any root in $\mathbb{R}$, or a $n\times n$ matrix need not have real eigenvalues. Thus, usually most of the theory is developed with the assumption that the underlying field is $\mathbb{C}$. The theory of real spaces, however, occurs naturally in all areas of mathematics and
physics. They come up naturally in the theory of $C^*$-algebras, for
instance the self-adjoint part of every $C^*$-algebra is a real
space, also in graded $C^*$-algebras and in the theory of real TROs in graded
$C^*$-algebras. See {\cite{Goo, Li, Sch, Sto}} for the theory of real $C^*$-algebras and real $W^*$-algebras. 
They also come up in $JB^*$-triples {\cite{Russo}} and 
KK theory {\cite{Bla, Guent}}. Thus it becomes important to study the analogues theory for the case
when the field is the real scalars and know which results hold true and which results fail.

The theory of real operator spaces is the study of subspaces of bounded operators on
real Hilbert spaces. In the general theory of (complex) operator spaces, the underlying Hilbert space
is assumed to be a complex Hilbert space. 

In two recent papers {\cite{RuaR, RuaC}}, Ruan studies the basic theory of real operator spaces. 
He shows that with appropriate modifications, many
complex results hold for real operator spaces. It is shown among
other things, that
Ruan's characterization, Stinespring's theorem, Arveson's extension
theorem, and injectivity of $B(H)$ for real Hilbert space $H$,
 hold true for real operator spaces.  
 In \cite{RuaC}, Ruan defines the notion of complexification of a real operator space
 and studies the relationship between the properties of real operator spaces and their complexification. 
We want to continue this program, and develop more theory of real operator spaces and real
 operator algebras since
there appears to be a gap in the literature here. This is unfortunate because 
a researcher who is familiar with the complex operator space theory and is facing a problem which involves real operator spaces, must then face the daunting task of reconstructing a large amount of the real theory from scratch.

In section 2, we further develop the real operator space theory, investigate operator space structures such as minimal and maximal operator spaces structures on real operator spaces. We also see that unlike in the Banach space theory (see \cite[Proposition 1.1.6]{Li}), if $X$ is a complex operator space then $(X^*)_r$ is not completely isometrically isomorphic to $(X_r)^*$, where $X_r$ denotes $X$ considered as a real operator space.
Also, unlike in the complex operator space theory, $\ell_2(\mathbb{R})$ does not have a unique operator space structure.
In section 3, we briefly consider real operator algebras and their
 complexification.  We show that the BRS characterization theorem and Meyer's Theorem hold for real operator algebras. 
In section 4,  we  study the relation between the real injective envelope and 
the injective envelope of its complexification. We also show among other things that the injective envelope, $C^*$-envelope and non-commutative Shilov boundary exist for a real 
operator space. In Section 5, we begin to develop the theory of real one-sided $M$-ideals and show that several results from one-sided $M$-ideal theory \cite{BEZ}, are true in the real case. We also show that 
a subspace $J$ in a real operator space $X$ is a right $M$-ideal if and only if $J_c$ is a (complex) right $M$-ideal in $X_c$, which allows us to characterize one-sided $M$-ideals in real $C^*$-algebras and in real operator algebras with one-sided contractive approximate identity. We also infer that a real operator space $X$ is $M$-embedded if and only if $X_c$ is $M$-embedded. This facilitates in generalizing results in one-sided $M$-embedded theory from \cite{Sha2} to real operator spaces. For instance, we show that real one-sided $M$-embedded TRO are of the form $A\cong \oplus_{i,j} ^{\circ}\mathbb{K}(H_i, H_j)$ completely isometrically, for some real Hilbert spaces $H_i$, $H_j$. 

Much of the work presented in this paper was done in author's thesis in 2009 (see \cite{Sha1}).

\section{Real Operator Spaces}
A (concrete) {\em real operator space} is a closed subspace of $B(H)$, for some real
Hilbert space $H$. An abstract real operator space is a pair 
$(X,\norm{.}_n)$, where $X$ is a real vector space such that
there is a complete isometry $u: X\longrightarrow B(H)$, for some real
Hilbert space $H$. As in the case of complex operator spaces,
Ruan's norm characterization hold for real operator spaces, and we
say that $(X,\norm{.}_n)$ is an abstract real
operator space if and only if it satisfies 
\begin{enumerate}[\upshape (i)]
\item $\norm{x\oplus y}_{n+m}=\max \{ \norm{x}_n, \norm{y}_m\}$,
\item $\norm{\alpha x\beta}_n \leq \norm{\alpha}\norm{x}_n \norm{\beta}$,
\end{enumerate}
for all $x\in M_n(X)$, $y\in M_m(X)$ and $\alpha, \beta \in M_n(\mathbb{R})$.

 Let $X\subset B(H)$,
then $X_c\subset B(H)_c$ and $B(H)_c\cong B(H_c)$ completely isometrically, where $H_c$ is a complex Hilbert space
(see e.g.\ discussion on page 1051 from \cite{RuaC}).
Thus there is a canonical matrix norm structure on $X_c$ inherited from $B(H_c)$, and
$X_c$ is a
complex operator space with this canonical norm structure. The space $B(H_c)$ can
be identified with a real subspace of $M_2(B(H))$ via 
\begin{equation}\label{eq2}
B(H_c)=B(H)+iB(H)=
                 \left\{\left[\begin{array}{cc} x & -y \\ 
                                               y & x \end{array}\right]\ :\ x,y\in B(H)\right\} \in M_2(B(H)).
\end{equation} 
Thus the matrix norm on the complexification
is given by
$$\mnorm{x_{kl}+iy_{kl}}=
                     \mnorm{\begin{array}{cc} x_{kl} & -y_{kl} \\ y_{kl} & x_{kl} \end{array}},$$
and we have the following complete isometric identification
$$X_c=\left\{\left[\begin{array}{cc} x & -y \\ 
                                               y & x \end{array}\right]\ :\ x,y\in X \right \} \in M_2(X).$$ 

This canonical complex operator space matrix norm structure on its complexification
$X_c=X+iX$ which extends the original norm on $X$, i.e., $\norm{x+i0}_n=\norm{x}_n$ and satisfies the {\em reasonable} (in the sense of {\cite{RuaC}}) condition
$$\norm{x+iy}=\norm{x-iy},$$
for all $x+iy\in M_n(X_c)=M_n(X)+iM_n(X)$ and $n\in \mathbb{N}$. 
By {\cite[Theorem 2.1]{RuaC}}, the operator space structure on $X_c$ is independent of the choice of $H$.  
Moreover, by {\cite[Theorem 3.1]{RuaC}}, any other reasonable {in the above sense} operator space structure on $X_c$ is completely isometric to the canonical operator space structure on $X_c$. 

Let $T$ be a (real linear) bounded operator between real operator spaces $X$ and $Y$, then 
define the complexification of $T$, $T_c:X_c\longrightarrow Y_c$
as $T_c(x+iy)=T(x)+iT(y)$, a complex linear bounded operator. It is shown in {\cite[Theorem 3.1]{RuaC}} that
if $T$ is a complete contraction (respectively, a complete isometry) then $T_c$ is a complete contraction (respectively, complete isometry) with $\norm{T_c}_{cb}=\norm{T}_{cb}$. This is
 not true in the case of a Banach space, that is, the complexification
 of a contraction on a real Banach space is bounded, but is not
 necessarily a contraction, and $\norm{T} \neq \norm{T_c}$, in general.
If $\pi:X_c\longrightarrow Y_c$ is linear, then as in \cite{RuaC}, define a linear map $\overline{\pi}:X_c\longrightarrow Y_c$ 
as $\overline{\pi}(x+iy)=\overline{\pi(x-iy)}$. Let ${\rm Re}(\pi)=\frac{\pi+\overline{\pi}}{2}$ and
let ${\rm Im}(\pi)=\frac{\pi-\overline{\pi}}{2i}$. Then ${\rm Re}(\pi)$ and ${\rm Im}(\pi)$ are (real) linear maps which map $X$
into $Y$ such that $\overline{{\rm Re}(\pi)}={\rm Re}(\pi)$, $\overline{{\rm Im}(\pi)}={\rm Im}(\pi)$, and $\pi={\rm Re}(\pi)+i{\rm Im}(\pi)$.

\subsection{Minimal Real Operator Space Structure}

A real $C^*$-algebra is a norm closed $*$-subalgebra of $B(H)$, where $H$ is a real Hilbert space. By \cite[Proposition 5.13]{Li},
every real $C^*$-algebra $A$ is a fixed point algebra of $(B,-)$, i.e., 
$A=\{b\in B \ :\ \overline{b}=b\}$, where $B$ is a (complex) $C^*$-algebra, and
``-'' is a conjugate linear $*$-algebraic isomorphism of $B$ with period $2$. Moreover,
$B=A+iA$ is the complexification of $A$. 

Let $A$ be a commutative real $C^*$-algebra. Then define the {\em spectral space} of $A$ as,
 $$\Omega=\{\rho|_A \ :\ {\rm \rho \ is \ nonzero\ multiplicative\ linear\ functional\ on\ A_c}\}.$$
 In other words, $\Omega$ is the set of all non-zero complex valued multiplicative real linear functionals
on $A$. Then using the ``$-$'' on $A_c$, define ``$-$'' on $\Omega$ as,
$$\overline{\rho} (a)=\overline{\rho (a)}.$$
 
Then every commutative real $C^*$-algebra $A$ is of the form
$$A\cong C_0(\Omega,-)=\{f\in C_0(\Omega):f(\overline{t})=\overline{f(t)}\; \forall \; t\in \Omega \},$$ 
where $\Omega$ is the spectral space of $A$,
and ``$-$'' is a conjugation on $\Omega$ defined above (see e.g. \cite[5.1.4]{Li}).
Also $A_c=C_0(\Omega)$. 

If $\Omega$ is any compact Hausdorff space then there is a canonical real $C^*$-algebra, 
$$C(\Omega, \mathbb{R})=\{f: \Omega \longrightarrow \mathbb{R}: {\rm f\ is\ continuous}\}.$$ 
 For instance, if $A$ is a commutative real $C^*$-algebra and $a\in A$ is self adjoint, then the real $C^*$-algebra generated by $a$ in $A$, $C^*(a)$, is of the form $C(\Omega, \mathbb{R})$, where $\Omega$ is the spectral space of $A$. 
 But not every
commutative real $C^*$-algebra $A$ is of the form $C(\Omega, \mathbb{R})$. To see this, let $\Omega=S^2\subset \mathbb{R}^3$, the 
3-dimensional sphere. Let $A=\{f:\Omega\longrightarrow \mathbb{C}: \ f(-t)=\overline{f(t)}\ \forall \ t\in \Omega\}$. Then $A_c=C(\Omega)$, so $A$ is a real $C^*$-algebra. But $A$ is not $*$-isomorphic to $C(\Omega,\mathbb{R})$
since $A_{\rm sa}=
\{f: \Omega \longrightarrow \mathbb{R}: \ f(t)=f(-t)\}\ncong C(\Omega,\mathbb{R})$. 

We can define an operator
space structure on $C(\Omega,-)$ by the canonical structure it inherits as a
subspace of $C(\Omega)$.
 Then $C(\Omega)$ is the operator space
complexification of $C(\Omega,-)$, in the sense defined
above (see e.g., \cite[Proposition 5.1.3]{Li}).
Let $E$ be a real Banach space. Then $E$ can be embedded isometrically
into a real commutative $C^*$-algebra $A$ of the form
$C(\Omega,\mathbb{R})$. For instance, take $\Omega=\Ball{E^*}$ where
$E^*=\{ f: E\longrightarrow \mathbb{R}$, $f$ continuous$\}$.
Since commutative real $C^*$-algebras are real operator spaces, there is 
an operator space matrix structure on $E$ via the identification
$M_n(E)\subseteq M_n(A)$. 
This operator space structure is called the minimal operator space structure since it 
is the smallest 
operator space structure on $E$. 
To see this,
let $E$ be the operator space sitting inside
$C(\Omega,\mathbb{R})$ and $F$ denote the Banach space $E$, with a different operator
space structure.  
Let $u:F\longrightarrow E$ be the identity map. So
$u$ is an isometry, $\norm{u(x)}_E=\norm{x}_E=\norm{x}_F$, and for any $[x_{ij}]\in M_n(E)$ and $\Omega=\Ball{E^*}$, 
\begin{eqnarray*}
  \norm{u_{n}[x_{ij}]}_{M_n(E)}&=&\norm{[u(x_{ij})]}_{M_{n(E)}}\\
  &=&\sup \{ \norm{[u(x_{ij})(t)]}_{M_n(\mathbb{R})}:t\in \Omega \}\\
  &=&\sup \{\lvert{\sum_{{i,j}}{u(x_{ij})(t)w_jv_i}}\rvert 
                            :\vec{v},\vec{w}\in l^2_n(\mathbb{R}),\; t\in \Omega \}\\
  &=&\sup\{ \| u(\sum_{i,j}{x_{ij}w_jv_i})\|_{E} :\vec{v},\vec{w}\in l^2_n(\mathbb{R}) \}\\
  &=&\sup\{{\| \sum_{i,j}{x_{ij}w_jv_i}}\|_{F} :\vec{v},\vec{w}\in l^2_n(\mathbb{R}) \} \\
  &\leq & \|{{[x_{ij}]}}\| _{M_{n}(F)}.                         
\end{eqnarray*}
This implies that $\norm{u:F\longrightarrow E}_{cb} \leq 1$.

Let $E$ be a Banach space and let $x, y\in E$. Define
\begin{eqnarray*}
      \norm{x+iy} 
           =\sup\{\| \alpha x+\beta y \| : 
                    \;\alpha^{2}+\beta^{2}\leq 1,\; \alpha,\beta \in \mathbb{R}\}.
\end{eqnarray*}
Then $\norm{x+iy}=\norm{x-iy}$ and
    $\norm{x+i0}=\norm{x}$, and thus with this new norm $E_c$ is a
    complexification of the Banach space $E$. This norm is called the $w_2$-norm in \cite{DF}.
Also note that for any $z+iw\in \mathbb{C}$, 
$$\abs{z+iw} 
           =\sup\{\abs{\alpha z+\beta w} : 
                    \;\alpha^{2}+\beta^{2}\leq 1,\; \alpha,\beta \in \mathbb{R}\}.$$
So, \begin{eqnarray*}
            \norm{x+iy}&=& \sup\{ \abs{\alpha f(x)+\beta f(y)} : 
                    \;\alpha^{2}+\beta^{2}\leq 1,\; \alpha,\beta \in \mathbb{R} \ {\rm and} \ f\in {\rm Ball}(E^*) \}\\
            &=& \sup \{ \abs{f(x)+if(y)} \ : \ f\in {\rm Ball}(E^*) \} \\   
            &=&\sup \left \{ \left \| \left [ 
                                \begin{array}{cc} 
                                       f(x) & -f(y)\\ 
                                       f(y) & f(x)
                                 \end{array}
                            \right ] \right \|  : f\in {\rm Ball}(E^*) \right \}.
\end{eqnarray*}                    

\begin{proposition} 
Let $E$ be a real Banach space and $E_c$ be the complexification of $E$ with the norm defined above. Then $(\Min{E})_c= \Min{E_c}$, completely isometrically.
\end{proposition}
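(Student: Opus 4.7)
The plan is to exhibit a single commutative complex $C^*$-algebra into which both $(\Min{E})_c$ and $\Min{E_c}$ embed as the same map, and to verify that in each case the embedding is completely isometric for the minimal structure. The distinguished ambient algebra will be $C(\Omega,\mathbb{C})$, where $\Omega=\Ball{E^*}$ is the closed unit ball of the real dual equipped with the weak-$*$ topology.

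The computation displayed immediately above the proposition already shows that
$$\norm{x+iy}_{E_c}=\sup_{f\in\Omega}\mnorm{\begin{array}{cc} f(x) & -f(y) \\ f(y) & f(x)\end{array}}_{M_2(\mathbb{R})}=\sup_{f\in\Omega}|f(x)+if(y)|,$$
so the map $\iota\colon E_c\to C(\Omega,\mathbb{C})$ defined by $\iota(x+iy)(f)=f(x)+if(y)$ is a complex-linear isometry (complex linearity is a direct check). First I would upgrade $\iota$ to a complete isometry from $(\Min{E})_c$ into $C(\Omega,\mathbb{C})$. Unfolding definitions,
$$\mnorm{x_{ij}+iy_{ij}}_{M_n((\Min{E})_c)}=\mnorm{\begin{array}{cc}[x_{ij}] & -[y_{ij}] \\ [y_{ij}] & [x_{ij}]\end{array}}_{M_{2n}(\Min{E})}=\sup_{f\in\Omega}\mnorm{\begin{array}{cc}[f(x_{ij})] & -[f(y_{ij})] \\ [f(y_{ij})] & [f(x_{ij})]\end{array}}_{M_{2n}(\mathbb{R})},$$
where the first equality is the definition of the operator space complexification and the second follows from $\Min{E}\subseteq C(\Omega,\mathbb{R})$. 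Applying the matricial case of the identification $B(H)_c\cong B(H_c)$ recorded in \eqref{eq2} with $H=\ell^2_n(\mathbb{R})$, the inner norm collapses to $\norm{[f(x_{ij})+if(y_{ij})]}_{M_n(\mathbb{C})}=\norm{[\iota(x_{ij}+iy_{ij})(f)]}_{M_n(\mathbb{C})}$, so the whole supremum equals $\norm{[\iota(x_{ij}+iy_{ij})]}_{M_n(C(\Omega,\mathbb{C}))}$.

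Next I would invoke the general fact that for any complex Banach space $V$ and any isometric complex-linear embedding $V\hookrightarrow C(\Omega',\mathbb{C})$, the inherited operator space structure is $\Min{V}$. This follows by swapping suprema: for $[v_{ij}]\in M_n(V)$,
$$\sup_{\omega\in\Omega'}\norm{[v_{ij}(\omega)]}_{M_n(\mathbb{C})}=\sup_{a,b\in\Ball{\ell^2_n(\mathbb{C})}}\sup_{\omega}\Big|\sum_{i,j}a_i b_j v_{ij}(\omega)\Big|=\sup_{a,b}\Big\|\sum_{i,j}a_i b_j v_{ij}\Big\|_V,$$
and the right-hand side depends only on the underlying Banach space norm, hence agrees with the corresponding expression computed via the canonical embedding $V\hookrightarrow C(\Ball{V^*},\mathbb{C})$ that defines $\Min{V}$. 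Applying this with $V=E_c$ and the isometric embedding $\iota$ established above, the structure $\iota$ induces on $E_c$ is exactly $\Min{E_c}$. Combined with the previous step, this gives $(\Min{E})_c=\Min{E_c}$ completely isometrically.

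The main point requiring care is the scalar identification $M_{2n}(\mathbb{R})_c\cong M_n(\mathbb{C})$ used to pass from the $2n\times 2n$ real block matrix norm to a complex $n\times n$ matrix norm: this is a special case of $B(H)_c\cong B(H_c)$ already invoked in the paper, but it is the pivot on which the whole calculation turns. Everything else is a direct unfolding of the definitions of $\Min$, of the operator space complexification, and a Fubini-type interchange of suprema.
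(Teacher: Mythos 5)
Your proposal is correct and takes essentially the same route as the paper: both arguments realize $(\Min{E})_c$ and $\Min{E_c}$ inside $C(\Ball{E^*},\mathbb{C})$ via the single map $x+iy\mapsto (f\mapsto f(x)+if(y))$, using the isometry computation for the $w_2$-norm on one side and the fact that an isometric embedding into a commutative $C^*$-algebra induces the minimal structure on the other. The only divergence is that where the paper invokes Ruan's theorem that complexification preserves complete isometries to handle the $(\Min{E})_c$ side, you verify that step directly through the $M_n(\mathbb{C})\hookrightarrow M_{2n}(\mathbb{R})$ block identification, which is a legitimate (and slightly more self-contained) substitute.
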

\begin{proof}
Let $\pi: E\longrightarrow C(\Omega, \mathbb{R})$ be the canonical isometry. Then 
$\Min{\pi}: \Min{E}\longrightarrow C(\Omega, \mathbb{R})$ is a complete isometry, and so, 
$\Min{\pi}_c: \Min{E}_c\longrightarrow  C(\Omega)$ is a complete isometry. Further,
\begin{eqnarray*}
   \norm{\pi_c(x+iy)}
        &=&\sup\{\lvert{\pi(x)(f)+i\pi(y)(f)}\rvert:f\in \Omega=\Ball{E^*}\}\\
        &=&\sup\{\lvert{f(x)+if(y)}\rvert:f\in \Ball{E^*}\}\\
        &=&\sup\{\norm{\alpha x+\beta y}: 
           \ \alpha^{2}+\beta^{2}\leq 1, \ \alpha,\beta \in \mathbb{R}\}\\
        &=& \norm{x+iy}_{E_c}.
\end{eqnarray*}
So $\pi_c: E_c\longrightarrow C(\Omega)$ is an isometry, and hence $\Min{\pi_c}: \Min{E_c}\longrightarrow C(\Omega)$ 
is a complete isometry. So we have the following diagram which commutes.
$$\xymatrix {\Min{E}_c \ar[r]^{\rm c.i.} \ar[d]_{\rm Id} & C(\Omega) \ar[d]^{\rm Id} \\
  \Min{E_c} \ar[r]_{{\rm c.i.}} & C(\Omega) } $$
Hence $(\Min{E})_c= \Min{E_c}$, completely isometrically. 
\end{proof}

\begin{lemma}\label{lemma1}
 Let $A$ and $B$ be real $C^*$-algebras, and let $\pi: A \longrightarrow B$ be a homomorphism. Then $\pi$ is a $*$-homomorphism if and only if 
it is completely contractive. Further, $\pi$ is a complete isometry if and only if it is one-one.
\end{lemma}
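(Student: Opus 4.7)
The plan is to reduce both equivalences to the classical complex versions by means of the complexification functor $A \mapsto A_c$, which the earlier part of the section has already shown behaves well both as a $C^*$-algebra and as an operator space. Given a real homomorphism $\pi:A\to B$, extend it complex-linearly to $\pi_c:A_c\to B_c$ by $\pi_c(a+ib)=\pi(a)+i\pi(b)$. A direct calculation using $(a+ib)(c+id)=(ac-bd)+i(ad+bc)$ and real-multiplicativity of $\pi$ shows that $\pi_c$ is a complex algebra homomorphism. The involution on $A_c$ is $(a+ib)^*=a^*-ib^*$, so $\pi_c$ is a $*$-homomorphism iff $\pi(a^*)=\pi(a)^*$ for all $a\in A$, i.e., iff $\pi$ is a $*$-homomorphism. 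By Ruan's theorem cited in this section (\cite[Theorem~3.1]{RuaC}), $\|\pi_c\|_{cb}=\|\pi\|_{cb}$ and $\pi_c$ is completely contractive (respectively, a complete isometry) iff $\pi$ is.

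With these dictionary entries in place, the first equivalence follows from the classical complex fact: a (complex) algebra homomorphism $\pi_c$ between complex $C^*$-algebras is a $*$-homomorphism if and only if it is contractive, if and only if it is completely contractive. The forward direction is standard (the spectrum shrinks under $*$-homomorphisms, giving a contraction, which lifts to matrices since $M_n(A_c)$ and $M_n(B_c)$ are again complex $C^*$-algebras and $M_n(\pi_c)$ is a $*$-homomorphism between them). The converse — that a contractive homomorphism between complex $C^*$-algebras is automatically $*$-preserving — is a classical result that I would simply cite.

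For the "further" statement, working under the hypothesis that $\pi$ is a $*$-homomorphism (equivalently, completely contractive), I would observe that $\pi$ is injective iff $\pi_c$ is injective: the forward direction is immediate from $\pi_c(a+ib)=\pi(a)+i\pi(b)$, and the reverse is the restriction. Then the classical complex result — an injective $*$-homomorphism between complex $C^*$-algebras is a complete isometry — applied to $\pi_c$ gives that $\pi_c$ is a complete isometry, hence $\pi$ is too by the dictionary from paragraph~1. The reverse implication (complete isometry implies one-one) is trivial. The main potential obstacle is conceptual rather than technical: one must be careful that the complexification really does preserve the $C^*$-algebra structure needed to invoke the complex results, and that the matrix-norm identifications $M_n(A)\subseteq M_n(A_c)$ are genuinely isometric. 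Both facts are built into the material recalled earlier in Section~2, so no further work is required there.
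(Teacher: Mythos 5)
Your proposal is correct and follows essentially the same route as the paper: complexify $\pi$ to $\pi_c$, invoke the classical complex facts (a homomorphism between complex $C^*$-algebras is a $*$-homomorphism iff completely contractive, and a one-one $*$-homomorphism is a complete isometry), and transfer back via Ruan's theorem that complete contractivity and complete isometry pass between $\pi$ and $\pi_c$. The paper's proof is just a terser version of yours, citing \cite[Proposition 1.2.4]{BLM} for the complex results.
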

\begin{proof}
Let $\pi: A \longrightarrow B$ be a $*$-homomorphism, then $\pi_c: A_c \longrightarrow B_c$ is 
 a $*$-homomorphism. Hence $\pi_c$ is a complete contraction, by \cite[Proposition 1.2.4]{BLM}, so
$\pi=\pi_c|_{A}$ is a complete contraction. A similar argument using the complexification proves the converse. The last assertion follows from \cite[Proposition 1.2.4]{BLM}
and that $\pi_c$ is one-one if $\pi$ is. 
\end{proof}

The following proposition has been noted in \cite{RuaC}.  
\begin{proposition}\label{dualcomp}
Let $X$ be a real operator space, then $(X_c)^*=(X^*)_c$, completely isometrically.
\end{proposition}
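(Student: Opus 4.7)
The plan is to construct an explicit completely isometric isomorphism $\Phi : (X^*)_c \to (X_c)^*$ and verify it is a complete isometry by tying both sides to a norm computation on matrix spaces of real cb maps.

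Define $\Phi$ by sending $f + ig \in X^* + iX^*$ to the unique complex linear extension of the $\mathbb{C}$-valued real linear map $x \mapsto f(x) + ig(x)$ on $X$; explicitly
$$\Phi(f+ig)(x+iy) \;=\; \bigl(f(x)-g(y)\bigr) + i\bigl(g(x)+f(y)\bigr).$$
One checks routinely that $\Phi$ is complex linear. The inverse is restriction: any $\varphi \in (X_c)^*$ restricts to $\varphi|_X : X \to \mathbb{C}$, which splits uniquely as $f+ig$ with $f,g \in X^*$, and $\Phi(f+ig) = \varphi$ by complex linearity on $X_c$. So $\Phi$ is a bijection.

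For the complete isometry, fix $n$ and $\xi = [f_{ij}] + i[g_{ij}] \in M_n((X^*)_c)$. By the canonical complexification formula recalled in (2.1),
$$\|\xi\|_{M_n((X^*)_c)} \;=\; \left\|\begin{bmatrix} [f_{ij}] & -[g_{ij}] \\ [g_{ij}] & [f_{ij}]\end{bmatrix}\right\|_{M_{2n}(X^*)},$$
which, under the standard real duality $M_{2n}(X^*) \cong CB(X, M_{2n}(\mathbb{R}))$, is the real cb norm of the map $T : X \to M_{2n}(\mathbb{R})$, $T(x) = \bigl[\begin{smallmatrix}[f_{ij}(x)] & -[g_{ij}(x)] \\ [g_{ij}(x)] & [f_{ij}(x)]\end{smallmatrix}\bigr]$. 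The image of $T$ lies in $M_n(\mathbb{C}) \subset M_{2n}(\mathbb{R})$ via (2.1), and the two ambient norms agree on this subspace. On the other hand $\|\Phi_n(\xi)\|_{M_n((X_c)^*)}$ equals the \emph{complex} cb norm of $T^{\#}:X_c \to M_n(\mathbb{C})$, where $T^{\#}(x+iy) = T(x)+iT(y)$ is exactly the complex linear extension of $T$ viewed as landing in $M_n(\mathbb{C})$.

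Thus the proposition reduces to the following key step: for any real linear $T:X \to M_n(\mathbb{C})$, its real cb norm equals the complex cb norm of its extension $T^{\#}:X_c \to M_n(\mathbb{C})$. One direction is immediate, since $X \hookrightarrow X_c$ is completely isometric. For the other, given $x+iy \in \mathrm{Ball}(M_m(X_c))$, the block $\bigl[\begin{smallmatrix} x & -y \\ y & x\end{smallmatrix}\bigr] \in M_{2m}(X)$ has norm exactly $\|x+iy\|_{M_m(X_c)}$; applying $T_{2m}$ produces $\bigl[\begin{smallmatrix} T_m(x) & -T_m(y) \\ T_m(y) & T_m(x)\end{smallmatrix}\bigr]$, and conjugating by the complex unitary $\tfrac{1}{\sqrt{2}}\bigl[\begin{smallmatrix} I & iI \\ iI & I\end{smallmatrix}\bigr]$ diagonalizes this to $\operatorname{diag}(T_m(x)+iT_m(y),\, T_m(x)-iT_m(y))$, whose norm is $\|T^{\#}_m(x+iy)\|$. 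This yields $\|T^{\#}_m(x+iy)\| \le \|T\|_{cb}$, hence the reverse inequality. The main obstacle is precisely this block-matrix manipulation bridging the real and complex cb norms; once it is set up, the proposition follows by chasing the identifications above.
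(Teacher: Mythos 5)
Your argument is correct. Note that the paper itself supplies no proof of this proposition; it only records that the fact ``has been noted in'' Ruan's complexification paper, so there is no in-text argument to compare against, and what you have written is a legitimate self-contained proof. Your reduction to the key lemma --- that the real cb norm of a real linear map $T:X\to M_n(\mathbb{C})$ equals the complex cb norm of its complex linear extension $T^{\#}:X_c\to M_n(\mathbb{C})$ --- is exactly the right pivot, and the conjugation by $U=\tfrac{1}{\sqrt2}\bigl[\begin{smallmatrix} I & iI\\ iI & I\end{smallmatrix}\bigr]$ is the standard device for it. One small overstatement: since $a=T_m(x)$ and $b=T_m(y)$ are in general \emph{complex} matrices, the diagonal matrix ${\rm diag}(a+ib,\,a-ib)$ has norm $\max\{\|a+ib\|,\|a-ib\|\}=\max\{\|T^{\#}_m(x+iy)\|,\|T^{\#}_m(x-iy)\|\}$, which need not equal $\|T^{\#}_m(x+iy)\|$ (take $X=\mathbb{C}_r$, $T={\rm id}$, $m=1$, $x=1$, $y=i$: then $a+ib=0$ while $a-ib=2$). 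This does not damage the proof: you only need $\|T^{\#}_m(x+iy)\|\le \|T_{2m}(\cdot)\|\le \|T\|_{cb}\,\|x+iy\|$, and the maximum is an upper bound for its first entry; alternatively, since $\|x+iy\|=\|x-iy\|$ by reasonableness of the complexification norm, taking suprema over the unit ball recovers the exact equality. With that phrase repaired, every identification you invoke (duality $M_{2n}(X^*)\cong CB(X,M_{2n}(\mathbb{R}))$, the isometric agreement of $M_n(\mathbb{C})$ with its image in $M_{2n}(\mathbb{R})$ at all matrix levels, and the computation that $\Phi_n(\xi)$ corresponds to $T^{\#}$) checks out.
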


\begin{proposition}\label{cdembedd}
 If $X$ is a real operator space then $X\subset X^{**}$ completely isometrically
via the canonical map $i_X$.
\end{proposition}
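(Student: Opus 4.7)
The plan is to bootstrap from the complex case via complexification, using Proposition \ref{dualcomp} as the bridge. The complex result that $i_Z : Z \to Z^{**}$ is a complete isometry for every complex operator space $Z$ is standard, so I would try to deduce the real statement by applying this to $Z = X_c$ and then restricting.

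First, I would apply Proposition \ref{dualcomp} twice. Since $X$ is a real operator space, $(X_c)^* = (X^*)_c$ completely isometrically. Now $X^*$ is itself a real operator space, so a second application yields
\[
(X_c)^{**} \;=\; ((X^*)_c)^* \;=\; (X^{**})_c
\]
completely isometrically as complex operator spaces. Under this identification, $X^{**}$ sits inside $(X_c)^{**}$ as a real operator subspace (with its canonical dual operator space structure), since $X^{**}\subset (X^{**})_c$ completely isometrically.

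Next I would appeal to the (complex) fact that $i_{X_c}:X_c\to (X_c)^{**}$ is a completely isometric embedding. Forgetting complex scalars, $i_{X_c}$ is in particular a real complete isometry; restricting it to the real subspace $X\subset X_c$ yields a real complete isometry from $X$ into $(X_c)^{**}=(X^{**})_c$. The remaining task is bookkeeping: I would check that under the chain of identifications above, $i_{X_c}|_X$ is exactly the canonical map $i_X : X\to X^{**}$, and in particular has image inside $X^{**}\subset (X^{**})_c$. Concretely, for $x\in X$ and $\phi=f+ig\in (X^*)_c=(X_c)^*$, one computes
\[
i_{X_c}(x)(f+ig)=(f+ig)(x)=f(x)+ig(x),
\]
which under the identification $(X_c)^{**}=(X^{**})_c$ corresponds to the element $i_X(x)+i\cdot 0\in X^{**}+iX^{**}$.

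The main obstacle is not any single hard estimate but rather making sure the two applications of Proposition \ref{dualcomp} are compatible, in the sense that the natural embedding $X\hookrightarrow X_c$ is carried by the bi-dual identification to the natural embedding $X^{**}\hookrightarrow (X^{**})_c$. Once this compatibility is verified, the conclusion that $i_X$ is a complete isometry follows immediately from the fact that the restriction of a real complete isometry to a real operator subspace is again a real complete isometry.
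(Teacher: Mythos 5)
Your proposal is correct and follows essentially the same route as the paper: both use the complex canonical embedding $X_c\hookrightarrow (X_c)^{**}$, identify $(X_c)^{**}$ with $(X^{**})_c$ via Proposition \ref{dualcomp}, and then restrict to $X$ while verifying that the restriction agrees with $i_X$ (the paper does this bookkeeping via ${\rm Re}(\pi)$, you do it by evaluating against $f+ig\in (X^*)_c$, which amounts to the same computation).
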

\begin{proof}
Let $X$ be a real operator space and let $\pi: X_c\hookrightarrow (X_c)^{**}$ be the canonical embedding. By Proposition 
\ref{dualcomp}, $(X_c)^{**}=(X^{**})_c$, completely isometrically via, say, $\theta$. Then 
$\theta \circ \pi$ is a complete isometry such that $(\theta \circ \pi)(z)={\rm Re}(\pi(z))+i{\rm Im}(\pi(z))$, for
all $z\in X_c$. So
the restriction of $\theta \circ \pi$ to $X$ is a complete isometry on $X$ such that, for all $f\in X^*$ and $x\in X$, 
$\overline{(\theta \circ \pi)}(x)=(\theta \circ \pi)(x)$, and
$((\theta \circ \pi)(x))(f)=({\rm Re}(\pi(x)))(f)=f(x)=i_X(x)$. Thus $i_X$ is a complete isometry.
\end{proof}

The maximal operator space structure is the largest operator space
structure that can be put on a real operator space, and its matrix
norms are defined exactly as in the complex case.
$$ \mnorm{x_{{i}{j}}}= 
     \sup \{ \mnorm{u(x_{ij})} : \ u\in \Ball{B(E,Y)},\ Y \; \text{a real operator space} \}.$$

 If we put the maximal operator space structure on $E$, then it has the
universal property that for any real operator space $Y$, and
$u:E\longrightarrow Y$ bounded linear, we have
$$\norm{u:E\longrightarrow Y}=\norm{u: \Max{E}\longrightarrow Y}_{cb}$$
i.e., $B(E,Y)=CB(\Max{E},Y)$

\begin{lemma}\label{rcdual}
Let $K$ be a compact Hausdorff space then $C(K, \mathbb{R})^{**}$ is a (real) commutative 
$C^*$-algebra of the form $C(\Omega,\mathbb{R})$. 
\end{lemma}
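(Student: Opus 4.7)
The plan is to use complexification to transfer to the classical fact that $C(K)^{**}$ is a commutative complex von Neumann algebra, hence of the form $C(\Omega)$, and then identify $C(K,\mathbb{R})^{**}$ with its real-valued part.

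First I would apply Proposition \ref{dualcomp} twice to obtain $(C(K,\mathbb{R})^{**})_c \cong (C(K,\mathbb{R})_c)^{**} = C(K)^{**}$ completely isometrically. Since $C(K)^{**}$ is the bidual of a commutative complex $C^*$-algebra, it is a commutative complex von Neumann algebra with the Arens product, so by Gelfand--Naimark there is a $*$-isomorphism $C(K)^{**} \cong C(\Omega)$ for some compact Hausdorff space $\Omega$.

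The crucial observation is that the $C^*$-involution on $C(K) = C(K,\mathbb{R}) + iC(K,\mathbb{R})$ coincides with the real-structure conjugation $a + ib \mapsto a - ib$, because every element of $C(K,\mathbb{R})$ is self-adjoint. Both maps are $\mathbb{R}$-linear and bounded, so their second duals on $C(K)^{**}$ are weak$^*$-continuous and agree on the weak$^*$-dense subset $C(K)$ (by Goldstine); hence they coincide on all of $C(K)^{**}$. Now the fixed points under the bidual conjugation are precisely $C(K,\mathbb{R})^{**}$ (because the identification $(X^{**})_c \cong (X_c)^{**}$ respects real parts, placing $X^{**}$ as the fixed-point set of the canonical conjugation on $(X_c)^{**}$), while the fixed points under the involution are $(C(K)^{**})_{\mathrm{sa}} \cong C(\Omega)_{\mathrm{sa}} = C(\Omega,\mathbb{R})$. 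Combining these identifications yields $C(K,\mathbb{R})^{**} \cong C(\Omega,\mathbb{R})$, and since $C(K,\mathbb{R})^{**}$ sits inside $C(K)^{**}$ as a real $*$-subalgebra, the Arens multiplication corresponds to pointwise multiplication on $C(\Omega,\mathbb{R})$, so the identification is an isomorphism of real $C^*$-algebras.

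The main delicate point is verifying that the two conjugations on $C(K)^{**}$ coincide. This hinges on showing they are both weak$^*$-continuous bidual extensions of $\mathbb{R}$-linear bounded maps on $C(K)$ that agree there, and on correctly identifying $C(K,\mathbb{R})^{**}$ with the fixed-point set of the bidual of the real-structure conjugation under the isomorphism of Proposition \ref{dualcomp} — essentially a bookkeeping issue, but one that must be made explicit for the conclusion $C(K,\mathbb{R})^{**} \cong C(\Omega,\mathbb{R})$ (rather than the weaker $C(K,\mathbb{R})^{**} \cong C(\Omega,-)$) to follow.
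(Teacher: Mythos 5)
Your proof is correct and follows essentially the same route as the paper: both identify $C(K)^{**}\cong C(\Omega)$ and then use weak$^*$-density of $C(K,\mathbb{R})$ (Goldstine) together with weak$^*$-continuity of the conjugation/involution to pin down $C(K,\mathbb{R})^{**}$ as the self-adjoint part $C(\Omega,\mathbb{R})$. The paper phrases this via the second adjoint of the inclusion $u:C(K,\mathbb{R})\hookrightarrow C(K)$ and an explicit net argument (plus a separate surjectivity step), while you package the same density argument as the coincidence of two weak$^*$-continuous conjugations and read off both injectivity and surjectivity from the fixed-point identification; the difference is only one of bookkeeping.
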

\begin{proof}
Let $u:C(K,\Rdb)\longrightarrow C(K,\Rdb)_c=C(K)$ be the inclusion
  map. Then $u^{**}:C(K,\Rdb)^{**}\longrightarrow C(K)^{**}$ is a
  $*$-monomorphism. The second dual of a (real or complex) commutative 
$C^*$-algebra is a commutative $C^*$-algebra. Let $C(K)^{**}\cong C(\Omega)$, $*$-isomorphically.
 Then
  $C(\Omega ,\Rdb)$ sits inside $C(\Omega)$ as a real space, in fact, as
  the real part such that, $C(\Omega,\Rdb)_c=C(\Omega)$. It is enough to show that
  $u^{**}(f)=\overline{(u^{**}(f))}$ for all $f\in C(K,\mathbb{R})^{**}$. We use a
  ${\rm weak}^*$-density argument. First, note that $u^{**}|_{C(K,\Rdb)}=u$ and $u$ is
  selfadjoint, i.e., $u(g)=\overline{u(g)}$ $\forall$ $g\in
  C(K,\Rdb)$. Let $f\in C(K,\Rdb)^{**}$, then there exists a net
  $\{f_{\lambda }\}$ in $C(K,\Rdb)$ converging ${\rm weak}^*$ to $f$. Then
  $u^{**}(f_\lambda )\buildrel {\rm weak}^*\over\longrightarrow
  u^{**}(f)$. This implies that $u^{**}(f_\lambda )(\omega)$ converges
  pointwise to $u^{**}(f)(\omega)$ in $\mathbb{C}$ for all $\omega \in
  \Omega$. Hence 
$\overline{u^{**}(f_\lambda )(\omega)}\longrightarrow \overline{u^{**}(f)(\omega)}$ 
in $\mathbb{C}$
  for all $\omega \in \Omega$. So $\overline{u^{**}(f_\lambda
    )}\buildrel {\rm weak}^*\over \longrightarrow \overline{u^{**}(f)}$. But
  $u^{**}(f_{\lambda })=u(f_{\lambda })=\overline{u^{**}(f_{\lambda
    })}=\overline{u(f_{\lambda })}$. So ${u^{**}(f_\lambda
    )}\buildrel {\rm weak}^*\over \longrightarrow \overline{u^{**}(f)}$. Hence,
  by uniqueness of limit, $u^{**}(f)=\overline{u^{**}(f)}$. This shows
  that the map $u^{**}$ is real, and hence it maps into $C({\Omega,\Rdb})$. 
Let $f\in C(\Omega, \mathbb{R})=(C(K)^{**})_{\rm sa}$. Let $\{f_{\lambda}\}\in C(K)$ be a net
which converges ${\rm weak}^*$ to $f$. Then $\{\overline{f_{\lambda}}\}$ also converges 
${\rm weak}^*$ to $f$, and so does $g_{\lambda}=\frac{f_{\lambda}+\overline{f_{\lambda}}}{2}\in C(K, \mathbb{R})$.
Thus $f\in \overline{\Ran{u}}^{{\rm weak}^*}\subset \Ran{u^{**}}$. Hence $u^{**}$ maps onto 
$C(\Omega, \mathbb{R})$.
\end{proof}

\begin{proposition}{\label{maxmin}}
  Let $E$ be a real Banach space, then 
$$\Min{E^*}=\Max{E}^* \ \ {\rm and }\ \ \Min{E}^*=\Max{E^*}, \ {\rm completely \ isometrically}.$$
\end{proposition}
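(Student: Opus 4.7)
The plan is to handle the two identities in turn. For $\Min{E^*} = \Max{E}^*$, the strategy is to compute the $n$-th matrix norms on both sides and see that they agree with the operator norm on $B(E, M_n(\mathbb{R}))$. On the one side, the standard operator-space identification $M_n(X^*) \cong CB(X, M_n(\mathbb{R}))$ combined with the universal property of $\Max{\cdot}$ gives $M_n(\Max{E}^*) = CB(\Max{E}, M_n(\mathbb{R})) = B(E, M_n(\mathbb{R}))$ isometrically. On the other side, the definition of the minimal matrix norm yields
\[
\mnorm{f_{ij}}_{M_n(\Min{E^*})} = \sup\{\mnorm{\phi(f_{ij})}_{M_n(\mathbb{R})} : \phi \in \Ball{E^{**}}\},
\]
and by weak-$*$ density of $\Ball{E}$ in $\Ball{E^{**}}$ together with continuity of the evaluations involved, this reduces to $\sup\{\mnorm{f_{ij}(e)}_{M_n(\mathbb{R})} : e \in \Ball{E}\} = \norm{T_{[f_{ij}]}}_{B(E, M_n(\mathbb{R}))}$ where $T_{[f_{ij}]}(e) = [f_{ij}(e)]$. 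The two matrix norms thus coincide.

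For $\Min{E}^* = \Max{E^*}$, the easy inequality $\norm{\mathrm{id} : \Max{E^*} \to \Min{E}^*}_{cb} \leq 1$ follows immediately from the universal property of $\Max{\cdot}$, since the underlying Banach space identity $E^* \to E^*$ is an isometry. For the reverse I would use operator-space duality: the map $\mathrm{id} : \Min{E}^* \to \Max{E^*}$ is completely contractive if and only if its transpose $\mathrm{id} : \Max{E^*}^* \to \Min{E}^{**}$ is, and by the first identity (already established) applied to the Banach space $E^*$, the source $\Max{E^*}^*$ equals $\Min{E^{**}}$. So it suffices to verify that the identity $\Min{E^{**}} \to \Min{E}^{**}$ on the underlying Banach space $E^{**}$ is completely contractive. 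But this is automatic: $\Min{E^{**}}$ is by construction the smallest real operator space structure on $E^{**}$, while $\Min{E}^{**}$ is simply another operator space structure on the same Banach space.

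To conclude, dualizing back I get $\mathrm{id} : (\Min{E}^*)^{**} \to \Max{E^*}^{**}$ completely contractive, using that $(\Min{E}^*)^{**} = (\Min{E}^{**})^*$ by iterated duality and that the dual of the first identity applied to $E^*$ identifies $(\Min{E^{**}})^* = \Max{E^*}^{**}$, both as the identity on $E^{***}$. Restricting via the canonical complete isometries $\Min{E}^* \hookrightarrow (\Min{E}^*)^{**}$ and $\Max{E^*} \hookrightarrow \Max{E^*}^{**}$ from Proposition \ref{cdembedd} to the common canonical image of $E^*$ in $E^{***}$ then yields $\mathrm{id} : \Min{E}^* \to \Max{E^*}$ as a completely contractive map, which combined with the easy direction gives the equality. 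The main care needed is the bookkeeping of the duality argument, verifying that the identification $\Max{E^*}^* = \Min{E^{**}}$ from the first identity really does act as the identity on the underlying Banach space, so that applying the dual functor preserves the identity on $E^{***}$; once this is tracked, the rest of the argument is a routine combination of the first identity, Proposition \ref{cdembedd}, and the universal property of $\Max{\cdot}$.
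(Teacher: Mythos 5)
Your treatment of $\Min{E^*}=\Max{E}^*$ is correct and is essentially the paper's argument: the paper phrases the computation of $M_n(\Min{E^*})$ via the injective tensor product $M_n(\mathbb{R})\check{\otimes}E^*\cong B(E,M_n(\mathbb{R}))$, which amounts to your Goldstine/weak-$*$ density reduction. The overall architecture of your second half --- dualize, apply the first identity to $E^*$, and descend via the canonical embeddings into the biduals (Proposition \ref{cdembedd}) --- also mirrors the paper's.

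The gap is the step you call ``automatic.'' You need $\mathrm{id}:\Min{E^{**}}\to\Min{E}^{**}$ to be completely contractive, i.e.\ that the matrix norms of $\Min{E}^{**}$ are dominated by those of $\Min{E^{**}}$. Minimality gives exactly the opposite: since $\Min{E^{**}}$ carries the \emph{smallest} matrix norms on the Banach space $E^{**}$, what is automatic is that maps \emph{into} the minimal structure are completely contractive, so $\mathrm{id}:\Min{E}^{**}\to\Min{E^{**}}$ is the free direction, not the one you need. Because $\Min{E^{**}}$ is already minimal, the inequality you want is equivalent to the full equality $\Min{E}^{**}=\Min{E^{**}}$, which is precisely the nontrivial content of this half of the proposition and cannot be obtained from the universal property alone. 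The paper supplies it as follows: Lemma \ref{rcdual} shows that $C(K,\mathbb{R})^{**}$ is again a commutative real $C^*$-algebra of the form $C(\Omega,\mathbb{R})$, whence $\Min{C(K,\mathbb{R})}^{**}=C(\Omega,\mathbb{R})=\Min{C(K,\mathbb{R})^{**}}$; then for a general $E$ one embeds $\Min{E}$ completely isometrically into some $C(K,\mathbb{R})$, takes second adjoints, and reads off $\Min{E}^{**}=\Min{E^{**}}$ from the resulting commuting square of complete isometries. With that ingredient restored, the rest of your duality bookkeeping (the first identity applied to $E^*$, followed by the passage back down from the biduals) goes through and recovers the paper's conclusion that $\Max{E^*}$ and $\Min{E}^*$ have identical duals and hence coincide completely isometrically.
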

\begin{proof}
We have that $M_n(\Max{E}^*)\cong
  CB(\Max{E}, M_n(\Rdb))\cong B(E, M_n(\Rdb ))$, isometrically, for each $n$. 
On the other hand,
  $$M_n(\Min{E^*})\cong M_n(\Rdb){\iotimes} E^* \cong B(E, M_n(\Rdb)),$$
  isometrically, where $\hat{\otimes}$ denotes the Banach space injective tensor
  product. Thus $\Min{E^*}=\Max{E}^*$.

  Let $K$ be a compact Hausdorff space, then
 by Lemma \ref{rcdual}, $$\Min{C(K,\Rdb)}^{**}=C(K,\Rdb)^{**}=C(\Omega,\Rdb).$$ On the
  other hand, $\Min{C(K, \Rdb)^{**}}=\Min{C(\Omega,\Rdb)}=C(\Omega,\Rdb)$. Hence
  $\Min{C(K,\mathbb{R})}^{**}$ $=\Min{C(K,\mathbb{R})^{**}}$.

  Let $E$ be a real Banach space, and suppose that $\Min{E}\hookrightarrow C(K,\Rdb)$
  completely isometrically. By taking the duals, we get the following commuting diagram
$$
\xymatrix{
  C(K,\Rdb)^{**}   & \ar[l] \Min{C(K,\Rdb)^{**}} \\
  \Min{E^{**}} \ar[u]& \ar[l] \Min{E}^{**}. \ar[u] }$$ Let $u$ denote the map from $\Min{E}^{**}$ to $\Min{E^{**}}$. Since all the maps except $u$,
in the above diagram are complete isometries and since the diagram commutes, it forces $u$
to be a complete isometry. Hence
$\Min{E}^{**}=\Min{E^{**}}$. Applying the first identity, we proved above, to $E^*$, we get 
$\Min{E^{**}}=\Max{E^*}^*$. Hence,
$\Max{E}^{**}=\Min{E}^{**}$. 
Let
$X=\Max{E^*}$ and $Y=\Min{E}^*$, then since $X^*=Y^*$, this
implies $X^{**}=Y^{**}$ completely isometrically. By the
commuting diagram below
$$\xymatrix {X \ar[r]^{Id} \ar@{^{(}->}[d] & Y \ar@{^{(}->}[d] \\
  X^{**} \ar[r] & Y^{**} } $$ it is clear that $X=Y$,
completely isometrically. 
\end{proof}

We write $\ell^1_2(\mathbb{R})$ for the two-dimensional real Banach space $\mathbb{R}\oplus_1 \mathbb{R}$, and 
$\ell^{\infty}_2(\Rdb)$ for $\mathbb{R}\oplus_{\infty} \mathbb{R}$. Then $\ell^1_2(\mathbb{R})$ is isometrically isomorphic to
$\ell^{\infty}_2(\Rdb)$ via $(x,y)\mapsto (x+y, x-y)$. We also have that $(\ell^{\infty}_2(\Rdb))^*\cong \ell^{1}_2(\Rdb)$ and 
$(\ell^{1}_2(\Rdb))^*\cong \ell^{\infty}_2(\Rdb)$, isometrically. From \cite{Pau} we know that there is a unique operator space structure
on the two-dimensional complex Banach space, $\ell^1_2(\mathbb{C})$. We see next that this is not true in the case of real operator spaces.

\begin{proposition}
The operator space structure on $l^1_2(\mathbb{R})$ is not unique.
\end{proposition}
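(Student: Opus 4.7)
The plan is to show directly that $\Min{\ell^1_2(\mathbb{R})}$ and $\Max{\ell^1_2(\mathbb{R})}$ are distinct operator space structures on $\ell^1_2(\mathbb{R})$. By the universal property of $\Max$, it is enough to exhibit a contractive real-linear map $u : \ell^1_2(\mathbb{R}) \to Y$ into some real operator space $Y$ which is not completely contractive once $\ell^1_2(\mathbb{R})$ carries the $\Min$ structure.

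I would take $Y = M_2(\mathbb{R})$ (a real $C^*$-algebra) and set $u(a, b) = a I + b J$, where $J = \left[\begin{smallmatrix} 0 & 1 \\ -1 & 0\end{smallmatrix}\right]$ realises multiplication by $i$ on $\mathbb{R}^2$. Since $\|aI + bJ\| = \sqrt{a^2 + b^2} \le |a| + |b|$, the map $u$ is a contraction. Morally, in the complex setting the analogous map is just scalar multiplication and is invisible; the extra real-linear isometry $J$ available in the real category is exactly what will force non-uniqueness.

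To verify that $u$ is not $2$-contractive on $\Min{\ell^1_2(\mathbb{R})}$, I would test $u_2$ on $X = \left[\begin{smallmatrix} e_1 & e_2 \\ -e_2 & e_1 \end{smallmatrix}\right] \in M_2(\ell^1_2(\mathbb{R}))$. The isometry $\ell^1_2(\mathbb{R}) \cong \ell^\infty_2(\mathbb{R}) = C(\{1,2\},\mathbb{R})$ given by $(a, b) \mapsto (a+b, a-b)$ implements the $\Min$ structure, and pointwise evaluation at either of the two points of $\{1,2\}$ turns $X$ into a $\sqrt{2}$-scaled rotation matrix of operator norm $\sqrt{2}$, so $\|X\|_{M_2(\Min{\ell^1_2(\mathbb{R})})} = \sqrt{2}$. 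On the other hand, $u_2(X) = \left[\begin{smallmatrix} I & J \\ -J & I \end{smallmatrix}\right]$ is a symmetric $4\times 4$ real matrix whose spectrum is $\{2,2,0,0\}$ (the vectors $(1,0,0,1)^T$ and $(0,1,-1,0)^T$ are eigenvectors for the eigenvalue $2$, while $(1,0,0,-1)^T$ and $(0,1,1,0)^T$ span its kernel), giving $\|u_2(X)\| = 2 > \sqrt{2}$. Hence $u$ is contractive but not $2$-contractive on $\Min{\ell^1_2(\mathbb{R})}$, while by the universal property of $\Max$ it is automatically completely contractive on $\Max{\ell^1_2(\mathbb{R})}$. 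Therefore the identity map from $\Max{\ell^1_2(\mathbb{R})}$ to $\Min{\ell^1_2(\mathbb{R})}$ is not a complete isometry, and the two operator space structures on $\ell^1_2(\mathbb{R})$ are genuinely distinct.

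The only conceptual input is the choice of the test map $u$ together with the witness matrix $X$; all subsequent checks are routine linear algebra. The main obstacle, or perhaps the main trick, is recognising that multiplication by $i$ on $\mathbb{R}^2$, an operation built into the scalars in the complex theory, is precisely the extra piece of data needed to separate the real $\Min$ and $\Max$ structures.
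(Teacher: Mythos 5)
Your argument is correct, and it is in essence the same strategy as the paper's: both proofs separate $\Min{\ell^1_2(\mathbb{R})}$ from $\Max{\ell^1_2(\mathbb{R})}$ at the $2\times 2$ matrix level by exhibiting an element whose minimal norm is $\sqrt{2}$ but whose maximal norm is at least $2$, and both compute the minimal norm via the isometry $\ell^1_2(\mathbb{R})\cong\ell^\infty_2(\mathbb{R})$, $(a,b)\mapsto(a+b,a-b)$. The difference lies in how the lower bound for the maximal norm is obtained. You invoke the universal property $B(E,Y)=CB(\Max{E},Y)$ directly, with the explicit contraction $u(a,b)=aI+bJ$ into $M_2(\mathbb{R})$ and the witness $X=\left[\begin{smallmatrix} e_1 & e_2\\ -e_2 & e_1\end{smallmatrix}\right]$; your eigenvalue computation for $\left[\begin{smallmatrix} I & J\\ -J & I\end{smallmatrix}\right]$ checks out, giving $\|u_2(X)\|=2>\sqrt{2}=\|X\|_{M_2(\Min{\ell^1_2})}$. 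The paper instead first proves the duality $\Max{\ell^1_2(\mathbb{R})}\cong\Min{\ell^\infty_2(\mathbb{R})}^*$ (its Proposition on $\Min{E^*}=\Max{E}^*$), extracts from it an explicit formula for the maximal matrix norms as a supremum over pairs of contractive real matrices, and then evaluates that supremum at the pair $A=\mathrm{diag}(1,-1)$, $B=\left[\begin{smallmatrix}0&1\\1&0\end{smallmatrix}\right]$. The two witnesses are interchangeable (in both cases one is really bounding $\|C\otimes C+D\otimes D\|$ from below by $2$ for suitable orthogonal $C,D$), but your route is leaner in its dependencies: it needs only the definition and universal property of $\Max$, not the $\Min$--$\Max$ duality, which in the paper requires the second-dual analysis of $C(K,\mathbb{R})$. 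What the paper's longer route buys is the duality statement itself, which is of independent interest, and a closed-form description of both matrix norms rather than a single separating functional. Your observation that the map $J$, i.e.\ multiplication by $i$ on $\mathbb{R}^2$, is exactly the real-linear contraction unavailable in the complex category is a nice conceptual gloss on why Paulsen's uniqueness result for $\ell^1_2(\mathbb{C})$ fails over $\mathbb{R}$.
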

\begin{proof}
We consider the maximal and the minimal operator space structures on
  $l^2_1(\mathbb{R})$. Using the facts stated above and Proposition \ref{maxmin}, we have that
$$\Max{l_2^1(\Rdb)}\cong \Max{l_2^{\infty}(\mathbb{R})^*}\cong {\Min{l_2^{\infty}(\Rdb)}}^*={l_2^{\infty}(\Rdb)}^*,$$
completely isometrically.
So the maximal operator space matrix norm on $l_2^1(\Rdb)$ is given by
$$\mnorm{(a_{{i}{j}},b_{{i}{j}})}_{\rm max}=
               \sup \{\mnorm{a_{{i}{j}}d_{{k}{l}}+b_{{i}{j}}e_{{k}{l}}} :
               \ [d_{{k}{l}}],[e_{{k}{l}}]\in \Ball{M_m(\mathbb{R})}, \; m\in \mathbb{N} \}.$$ 
On the other hand,
$\Min{l_2^1(\Rdb)}\cong \Min{l_2^{\infty}(\Rdb)}=l_2^{\infty}(\Rdb)$, completely isometrically via the map $(x,y)\mapsto (x+y,x-y)$. So the matrix
norm on $\Min{l_2^1(\Rdb)}$ is 
$$\mnorm{(a_{{ij}},b_{{ij}})}_{\rm min}= 
                        {\rm max}\{\mnorm{(a_{{i}{j}}+b_{{i}{j}})},\mnorm{(a_{{i}{j}}-b_{{i}{j}})}\}.$$
 It is clear that $\mnorm{(a_{{i}{j}},b_{{i}{j}})}_{\rm min} \leq \mnorm{(a_{{i}{j}},b_{{i}{j}})}_{\rm max}.$ 
Let $A=\left[ \begin{array}{cc} 1 & 0 \\ 0 & -1 \end{array} \right]$ and $B=\left[ \begin{array}{cc} 0 & 1 \\ 1 & 0 \end{array} \right]$. Then
\begin{eqnarray*}
\norm{(A,B)}_{M_{2}(\Min{l_2^1)}}&=&\max\{\norm{A+B},\norm{A-B}\}\\
                          &=&\max\left \{\norm{\left[ 
                             \begin{array}{cc} 1 & 1 \\ 1 & -1 
                             \end{array} \right]},\norm{\left[ 
                             \begin{array}{cc} 1 & -1 \\ -1 & -1 
                             \end{array} \right]}\right \}\\
                          &=&\sqrt{2}.
\end{eqnarray*}
Let $[d_{kl}]=A$ and $[e_{kl}]=B$, then 
 \begin{eqnarray*}
\norm{(A,B)}_{M_{2}(\Max{\ell_2^1})} \geq \mnorm{ \begin{array}{cc} {\left [ \begin{array}{cc} 1 & 0 \\ 0 & -1 \end{array} \right ]} & 0 \\ 0 & -{\left [ \begin{array}{cc} 1 & 0 \\ 0 & -1 \end{array} \right]} \end{array} \right ]  + \left [ \begin{array}{cc} 0 & {\left[ \begin{array}{cc} 0 & 1 \\ 1 & 0 \end{array} \right ]} \\ {\left[ \begin{array}{cc} 0 & 1 \\ 1 & 0 \end{array} \right]} & 0 \end{array}}
 \end{eqnarray*}
 On adding and rearranging the rows and columns we see that this norm is the same as
 \begin{eqnarray*}
\mnorm {\begin{array}{cc} \left[ \begin{array}{cc} 1 & 1 \\ 1 & 1 \end{array} \right] & 0 \\ 0 & {\left[ \begin{array}{cc} 1 & -1 \\ -1 & 1 \end{array} \right]}\end{array} }
&=& \max \left \{ \mnorm{\begin{array}{cc} 1 & 1 \\ 1 & 1 \end{array} },\mnorm{ \begin{array}{cc} 1 & -1 \\ -1 & 1 \end{array}}\right \}\\
&=&2.
\end{eqnarray*}
So $\norm{(A,B)}_{M_{2}(\Max{l_2^1})}\geq 2 >
\sqrt{2}=\norm{(A,B)}_{M_{2}(\Min{l_2^1})}$. Hence, there are
two different operator space structures on
$l_2^1(\Rdb)$.
\end{proof}

If X is a complex operator space then, it is also a real operator
space, and hence we can talk about the dual of $X$ both as a real operator
space $X_r^*$, as well as a complex operator space $X^*$, and ask the
question, whether these two spaces are the same real operator
spaces. Then by {\cite{Li}}, $(X^*)_r$ is isometrically isomorphic to $(X_r)^*$. We see next that these spaces need not be completely isometrically isomorphic.
\begin{proposition}
 Let $X=\mathbb{C}$, be a (complex) operator space with the canonical operator space structure. Then $(X_r)^*$ and $(X^*)_r$ are isometrically isomorphic but not necessarily completely isometrically
isomorphic.
\end{proposition}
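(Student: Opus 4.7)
The plan is to reduce the question to comparing the complex operator space complexifications of $(X_r)^*$ and $(X^*)_r$, using Proposition \ref{dualcomp}. First I would identify $X_r$ concretely: the canonical operator space structure on $X=\mathbb{C}$ is the one obtained by viewing $\mathbb{C}$ as scalar-multiplication operators on the complex Hilbert space $\mathbb{C}$, which as a real Hilbert space is $\mathbb{R}^2$. Hence $X_r$ embeds into $B(\mathbb{R}^2)=M_2(\mathbb{R})$ via the real $*$-monomorphism $a+bi\mapsto\bigl[\begin{smallmatrix} a & -b \\ b & a \end{smallmatrix}\bigr]$. Using the description of $B(\mathbb{R}^2)_c$ in (\ref{eq2}), the complexification $(X_r)_c$ then sits inside $B(\mathbb{R}^2)_c \cong M_2(\mathbb{C})$ as the subspace of matrices $\bigl[\begin{smallmatrix} \alpha & -\beta \\ \beta & \alpha \end{smallmatrix}\bigr]$ with $\alpha,\beta\in\mathbb{C}$. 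Conjugation by the unitary $V=\tfrac{1}{\sqrt 2}\bigl[\begin{smallmatrix} 1 & 1 \\ i & -i \end{smallmatrix}\bigr]$ simultaneously diagonalizes this family to $\mathrm{diag}(\alpha-i\beta,\,\alpha+i\beta)$, so $(X_r)_c \cong \ell^{\infty}_2(\mathbb{C})$ completely isometrically.

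Next, Proposition \ref{dualcomp} yields
$$((X_r)^*)_c \;\cong\; ((X_r)_c)^* \;\cong\; (\ell^{\infty}_2(\mathbb{C}))^* \;=\; \ell^{1}_2(\mathbb{C})$$
completely isometrically. Meanwhile $X^*=\mathbb{C}$ with its canonical operator space structure equals $X$, so $(X^*)_r=X_r$ and therefore $((X^*)_r)_c=(X_r)_c\cong\ell^{\infty}_2(\mathbb{C})$. The isometric part of the statement is then immediate, since $(X_r)^*$ and $(X^*)_r$ are each just $\mathbb{R}^2$ equipped with the (self-dual) Euclidean norm.

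For the failure of complete isometry, I argue by contradiction: a complete isometry $(X_r)^*\cong(X^*)_r$ of real operator spaces would complexify (by \cite[Theorem 3.1]{RuaC}, cited earlier in the excerpt) to a complex complete isometry $\ell^{1}_2(\mathbb{C})\cong\ell^{\infty}_2(\mathbb{C})$, and in particular to an isometry of the underlying complex Banach spaces. This fails: $\Ball{\ell^{\infty}_2(\mathbb{C})}$ is the bidisc, whose extreme points form the $2$-dimensional torus $\mathbb{T}\times\mathbb{T}$, whereas the extreme points of $\Ball{\ell^{1}_2(\mathbb{C})}$ form the disjoint union $\{(e^{i\theta},0)\}\cup\{(0,e^{i\theta})\}$ of two circles, a $1$-dimensional set. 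Since isometries of Banach spaces preserve extreme points of the unit ball, this is a contradiction.

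The principal subtlety I anticipate is the explicit diagonalization identifying $(X_r)_c$ with $\ell^{\infty}_2(\mathbb{C})$; once that step is in hand, the rest is a clean duality argument via Proposition \ref{dualcomp} combined with a standard extreme-point obstruction at the Banach space level.
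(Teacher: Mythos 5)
Your proof is correct, but it takes a genuinely different route from the paper's. The paper stays entirely on the real side: it identifies $\mathbb{C}^*$ with $\mathbb{C}$, argues that any candidate complete isometric isomorphism must be unitarily equivalent to the canonical map $\theta(z)(y)={\rm Re}(y\bar z)$, and then exhibits the explicit matrix $\left[\begin{smallmatrix}1 & i\\ 0 & 0\end{smallmatrix}\right]$, of norm $\sqrt2$ in $M_2(\mathbb{C}^*)$, whose image under $\theta_2$ has norm $1$. You instead push the whole question through the complexification functor: the identification $(\mathbb{C}_r)_c\cong\ell^\infty_2(\mathbb{C})$ (your diagonalization of $\left[\begin{smallmatrix}\alpha & -\beta\\ \beta & \alpha\end{smallmatrix}\right]$ is correct, and is the crux), together with Proposition \ref{dualcomp}, shows that the two spaces in question have complexifications $\ell^1_2(\mathbb{C})$ and $\ell^\infty_2(\mathbb{C})$, which are not even isometric as complex Banach spaces by the extreme-point count; since a surjective real complete isometry would complexify to a surjective complete isometry between these, none can exist. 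Your approach buys two things: it disposes of \emph{all} candidate isomorphisms at once, sidestepping the paper's somewhat delicate classification step (which must implicitly account for reflections such as conjugation as well as rotations), and it reduces the obstruction to a classical Banach-space fact, so no matrix-norm computation is needed beyond identifying $(\mathbb{C}_r)_c$. What the paper's computation buys in exchange is concreteness: it pinpoints a specific $2\times 2$ matrix at which, and the exact amount by which, the two matrix norms differ. Both arguments are sound.
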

\begin{proof}
It follows from \cite[Proposition 1.1.6]{Li} that $(\mathbb{C}_r)^*\cong (\mathbb{C}^*)_r$, isometrically.

 Note
  that $\mathbb{C}^*$ is completely isometrically isomorphic to $\mathbb{C}$ via
  the map $\phi_z\longrightarrow z$. 

  Consider
  the canonical map $\theta:\mathbb{C}^*\longrightarrow
  \mathbb{C}_r^*$ given by $\theta(\phi)={\rm Re}(\phi$). By the
  identification $\mathbb{C}\cong\mathbb{C}^*$, we can view the
  above map as $\theta(z)(y)={\rm Re}(y\bar{z})$. If there
  is any complete isometric isomorphism, say $\psi$, then since $\psi$
  is an onto isometry between 2-dimensional real Hilbert spaces, it is
  unitarily equivalent to $\theta$. Any unitary from
  $\mathbb{C}$ to $\mathbb{C}$, is a rotation by an angle $\alpha$. 
  So, $u$ is multiplication by $e^{i\alpha}$, which is a complete isometry
  with the canonical operator space matrix norm structure on $\mathbb{C}$. Then 
  $\theta=u^{-1}\psi u$ is a complete isometry. Thus $\psi$ is a complete isometry if and only
if $\theta$ is a complete isometry. Hence it is enough to show that $\theta$
   is not a complete isometry.

   Consider
  $x=\left[ \begin{array}{cc} 1 & i \\ 0 & 0 \end{array} \right ].$
  Then $\norm{x}=\sqrt{2}$. Since $\theta_2(x)\in
  M_{2}(\mathbb{C}^*)\cong CB(\mathbb{C}, M_{2}(\Rdb)),$ we have that

  $$\norm{\theta_2(x)}=\sup\{\norm{\theta_2(x)([z_{k l}])}\; :[z_{k
    l}=x_{k l}+iy_{k l}]\in M_n(\mathbb{C})\}.$$

  Consider
 \begin{eqnarray*}
\norm{\theta_2(x)[x_{kl}+iy_{kl}]}&=&\mnorm{ \begin{array} {cc} {\rm Re}[x_{k l}+iy_{k l}] & {\rm Re}[ix_{k l}-y_{k l}]\\ 0 & 0 \end{array}}\\
&=&\mnorm{ \begin{array} {cc} [x_{k l}] & [-y_{k l}]\\ 0 & 0 \end{array}}.
\end{eqnarray*} 

 Let $\vec{v}$ be a row vector of length $2n$, whose first $n$ entries are $\alpha_i$ and the last $n$ entries are $\beta_i$. Then the norm of the square of $\vec{v}$ produced by the action of $\left [ \begin{array} {cc} [x_{k l}] & [-y_{k l}]\\ 0 & 0 \end{array}\right ]$ is given by
\begin{eqnarray*}
  \sum_{k=1}^{n}\abs{\sum_{l=1}^{n}(x_{k l}\alpha_{l}-y_{k l}\beta_{l })}^2 
& \leq& \sum_{k=1}^{n} \abs{\sum_{l=1}^{n}(x_{k l}+iy_{k l})(\alpha_l+i\beta_l)}^2\\
& \leq& \mnorm{x_{k l}+iy_{k l}}^2 .
\end{eqnarray*}
So $ \mnorm{ \begin{array} {cc} [x_{k l}] & [-y_{k l}]\\ 0 &
    0 \end{array}} \leq \mnorm{x_{k l}+iy_{k l}}$, and hence
$\norm{\theta_2(x)}\leq 1$. In fact $\norm{\theta_2(x)}$ is equal to 1, for instance if $[z_{kl}]=I_{M_2(\mathbb{R})}$, then  $\norm{\theta_2(x)}\geq 1$. Thus $\norm{\theta_2(x)}= 1
\leq \sqrt{2}=\norm{x}$. 
\end{proof}

\begin{xrem}
 We end this section with a list of several results from the operator space theory which can be generalized for
the real operator spaces using the exact same proof as in the complex setting. Various constructions  
using real operator spaces like taking the quotient, infinite direct sums, $c_0$-direct sums, mapping spaces $CB(X,Y)$ and
matrix spaces $\mathbb{M}_{I,J}(X)$ can be defined analogously, and are real operator spaces. All the results and properties of matrix spaces hold true for the real operator spaces (see e.g. \cite[1.2.26]{BLM}).  Further, we can define
Hilbert row and Hilbert column operator space structure on a real Hilbert space by replacing $\mathbb{C}$ with $\mathbb{R}$, in the usual definition. Then $B(H,K)\cong CB(H^c, K^c)$ and $B(H,K)\cong CB(K^r, H^r)$ completely isometrically, for real Hilbert spaces $H,K$. Also $(H^c)^*\cong H^r$ and $(H^r)^*\cong H^c$. 
We can show that if $u: X\longrightarrow Z$ is completely bounded between real operator spaces $X$ and $Z$, and $Y$ is any subspace of 
$\Ker{u}$, then the canonical map $\tilde{u}: X/Y\longrightarrow Z$ induced by $u$ is completely bounded. If $Y=\Ker{u}$ then $u$ is a complete
quotient if and only if $\tilde{u}$ is a completely isometric isomorphism. The duality of subspaces and quotients hold in the real case, i.e., $X^*\cong Y^*/X^{\perp}$
and $(Y/X)^*\cong X^{\perp}$ completely isometrically, where $Y$ is a subspace of the real operator space $X$. It is also true that the trace class operator $S^1(H)$ is the predual of $B(H)$ for
every real Hilbert space $H$. If $X$ is a real operator space then $M_{m,n}(X)^{**}\cong M_{m,n}(X^{**})$ completely isometrically for all $m,n\in \mathbb{N}$. If $X$ and $Y$ are real operator spaces and if $u:X\longrightarrow Y^*$ is completely bounded, then its (unique) $w^*$-extension 
$\tilde{u}:X^{**}\longrightarrow Y^*$ is completely bounded with $\norm{\tilde{u}}=\norm{u}$. Hence $CB(X,Y^*)=w^*CB(X^{**},Y^*)$ completely  isometrically.
\end{xrem}

\section{Real Operator Algebras}

\begin{definition}
  An (abstract) real operator algebra $A$ is an algebra which is also an operator space, such that $A$ is completely isometrically
  isomorphic to a subalgebra of $B(H)$ for some (real) Hilbert space $H$, i.e., there exists a (real) completely
  isometric homomorphism $\pi :A\longrightarrow B(H)$.
For any $n$, $M_n(A)\subset M_n(B(H))=B(H^n)$ is a real operator algebra
with product of two elements, $[a_{ij}]$ and $[b_{ij}]$ of $M_n(A)$, given by
$$ [a_{ij}][b_{ij}]=[\sum_{k=1}^n a_{ik}b_{kj}].$$
\end{definition}

Every real operator algebra can be embedded (uniquely up to a complete
isometry) into a complex operator algebra via Ruan's `reasonable' complexification.
Let $A$ be a real operator algebra and $A_c=A+iA$
be the operator space complexification of $A$. Then $A_c$ is an algebra with a natural product
$$(x+iy)(v+iw)=(xv-yw)+i(xw+yv).$$

Suppose that 
$\pi :A\longrightarrow B(H)$ is a complete isometric homomorphism, 
for some real Hilbert space $H$. Then $\pi_c:A_c\longrightarrow B(H)_c$ is a (complex) complete isometry, and 
it is easy to see
that $\pi_c$ is also a homomorphism. Thus $A_c$ is a complex operator algebra if $A$ is a real operator algebra.
As in {\cite{RuaC}}, $B(H_c)=B(H)+iB(H)$ has a reasonable norm extension 
$\{\norm{.}_n\}_{n\in \mathbb{N}}$, these norms are inherited by $A_c$ via 
the complete isometric homomorphism $\pi_c :A_c\longrightarrow B(H)_c$. Thus the matrix norms on $A_c$ satisfy
$\norm{x+i0}_n=\norm{x}_n$ and $\norm{x+iy}=\norm{x-iy},$
for all $x+iy\in M_n(X_c)=M_n(X)+iM_n(X)$ and $n\in \mathbb{N}$.
The
conjugation ``-'' on $A_c$ satisfies $\overline{xy}=\bar{x}\bar{y}$, for all $x,y\in A_c$.

{\bf Remarks.}
1) The complexification of a real operator algebra is unique, up to
 complete isometry by \cite[Theorem 3.1]{RuaC}.

2) If $A$ is approximately unital then so is $A_c$. Indeed if $e_t$ is an approximate
unit for $A$, then for any $x+iy\in A_c$, 
$\norm{e_t(x+iy)-(x+iy)}\leq \norm{e_tx-x}+\norm{e_ty-y}$. Thus $e_t$ is an approximate
identity for $A_c$.

Now we show that there is a real version of the BRS theorem which characterizes the real operator algebras.
\begin{theorem}{(BRS Real Version)}
  Let $A$ be a real operator space which is also an approximately unital Banach space. Then the following are equivalent:
\begin{enumerate}[\upshape (i)]
\item The multiplication map $m: A\otimes_h A \longrightarrow A$ is completely contractive.
\item For any $n$, $M_n(A)$ is a Banach algebra. That is,
$$\mnorm{\sum_{k=1}^{n}{a_{{i}{k}}b_{{k}{j}}}}_{M_{n}(A)}\leq \mnorm{a_{{i}{j}}}_{M_{n}(A)} \mnorm{b_{{i}{j}}}_{M_{n}(A)},$$
for any $[a_{{i}{j}}]$ and $[b_{{i}{j}}]$ in $M_{n}(A).$
\item $A$ is a real operator algebra, that is, there exist a real
Hilbert space $H$ and a completely isometric homomorphism
$\pi:A\longrightarrow B(H)$.
\end{enumerate}
\end{theorem}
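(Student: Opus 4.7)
The plan is to cycle (iii) $\Rightarrow$ (i) $\Rightarrow$ (ii) $\Rightarrow$ (iii), using the complexification machinery already developed in the paper to funnel the nontrivial implication into the complex BRS theorem.

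The direction (iii) $\Rightarrow$ (i) is standard: multiplication on $B(H)$ is completely contractive on its Haagerup tensor square (this is a direct consequence of the factorization definition of the Haagerup norm, and the argument transfers from the complex setting to the real one unchanged). Since $A \hookrightarrow B(H)$ is a completely isometric subalgebra, multiplication on $A$ inherits complete contractivity from $B(H)$. The direction (i) $\Rightarrow$ (ii) is immediate: writing $[a_{ij}][b_{ij}] = m_n([a_{ij}] \odot [b_{ij}])$, where $[a_{ij}] \odot [b_{ij}]$ denotes the matrix $[\sum_k a_{ik} \otimes b_{kj}]$ in $M_n(A \otimes_h A)$, the $M_n(A)$-norm of the product is bounded by the Haagerup norm of this elementary pairing, which in turn is at most $\norm{[a_{ij}]}\,\norm{[b_{ij}]}$.

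For the main implication (ii) $\Rightarrow$ (iii), I would pass to the complexification $A_c$ and invoke the complex BRS theorem. Using the block-matrix description (\ref{eq2}), an element of $M_n(A_c)$ corresponds to a block in $M_{2n}(A)$ of the form $\begin{bmatrix} x & -y \\ y & x \end{bmatrix}$, and the matrix identity
$$\begin{bmatrix} x_1 & -x_2 \\ x_2 & x_1 \end{bmatrix} \begin{bmatrix} y_1 & -y_2 \\ y_2 & y_1 \end{bmatrix} = \begin{bmatrix} x_1y_1 - x_2y_2 & -(x_1y_2 + x_2y_1) \\ x_1y_2 + x_2y_1 & x_1y_1 - x_2y_2 \end{bmatrix}$$
shows that the natural multiplication on $M_n(A_c)$ is realized as ordinary block-matrix multiplication in $M_{2n}(A)$. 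Hence submultiplicativity of the $M_{2n}(A)$-norm (given by (ii)) transfers to submultiplicativity of $M_n(A_c)$. Combined with the approximate identity for $A_c$ from Remark 2, the complex BRS theorem produces a completely isometric complex algebra homomorphism $\sigma: A_c \to B(K)$ for some complex Hilbert space $K$. Restricting $\sigma$ to $A$ and viewing $B(K)$ as a real operator subalgebra of $B(K_r)$ (where $K_r$ denotes $K$ as a real Hilbert space) yields the required completely isometric real homomorphism $\pi = \sigma|_A: A \to B(K_r)$, since $A \hookrightarrow A_c$ is itself a completely isometric real embedding.

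The step that requires care is the block-matrix verification: that the multiplication on $A_c$, expressed on its real block representation, coincides with ordinary matrix multiplication in $M_{2n}(A)$. Once this identity is in hand, transferring submultiplicativity and approximate unitality to $A_c$ is automatic, and the rest of the argument is a clean reduction to the complex BRS theorem via the complexification tools already established.
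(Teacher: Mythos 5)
Your proposal is correct and follows essentially the same route as the paper: the easy directions via the Haagerup tensor product, and the main implication (ii)~$\Rightarrow$~(iii) by transporting submultiplicativity to $M_n(A_c)$ through the isometric block embedding $x+iy\mapsto\left[\begin{smallmatrix} x & -y\\ y & x\end{smallmatrix}\right]$ into $M_{2n}(A)$, invoking the complex BRS theorem, and restricting the resulting representation back to $A$ via the identification of $B(K)$ inside $M_2(B(H))$. The only difference is cosmetic: the paper additionally writes out the decomposition $\pi=\pi_1+i\pi_2$ and verifies the homomorphism property of the restriction by an explicit block computation, which your argument subsumes in the observation that $A\hookrightarrow A_c$ and $B(K)\hookrightarrow B(K_r)$ are completely isometric real embeddings.
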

\begin{proof}
The equivalence between ${\rm (i)}$ and ${\rm (ii)}$, and that ${\rm (iii)}$ implies these, follows from the property that the Haagerup tensor product of real operator spaces linearizes completely bounded bilinear maps, and the fact that each $M_n(A)$ is an operator algebra.

  ${\rm (iii)} \Rightarrow {\rm (ii)}$ \  Let $\pi: A\longrightarrow B(H)$. Then by {\cite[Theorem 2.1]{RuaC}}, 
$\pi_c:A_c\longrightarrow B(H)_c$ is a complete isometric homomorphism. 
Let $[a_{ij}],[b_{ij}] \in M_n(A)\subset M_n(A_c)$. Then by the BRS theorem for complex operator algebras,
$$\mnorm{\sum_{k=1}^{n}{a_{{i}{k}}b_{{k}{j}}}}_{M_{n}(A)}\leq \mnorm{a_{{i}{j}}}_{M_{n}(A)} \mnorm{b_{{i}{j}}}_{M_{n}(A)}.$$

${\rm (ii)} \Rightarrow {\rm (iii)}$ \  Since $A$ is approximately unital, by the above remark, $A_c$ is also approximately unital.
 Let $\theta:A_c\longrightarrow
  M_2(A)$ be $\theta(x+iy)= \left[ \begin{array}{cc} x & y \\ -y &
      x \end{array} \right]$. Then 
 $\theta$ is a complete isometric homomorphism and each amplification, $\theta_n$ is an isometric homomorphism.
Let 
$a=[a_{ij}], \ b=[b_{ij}]\in M_n(A_c)$. Then
$$\norm{ab}=\norm{\theta_n(ab)}=\norm{\theta_n(a)\theta_n(b)}\leq \norm{\theta_n(a)}\norm{\theta_n(b)}=
\norm{a}\norm{b}.$$
Thus by the BRS theorem for complex operator algebras, there exists a completely isometric homomorphism 
 $\pi: A_c\longrightarrow B(K)$, for some complex Hilbert space $K$. Let $K=H_c$. 
Define 
$\pi_1=\frac{\pi+\overline{\pi}}{2}$ and $\pi_2=\frac{\pi-\overline{\pi}}{2i}$. Then $\pi_1, \ \pi_2$ are (complex) linear maps such that $\pi_1=\overline{\pi_1}$, $\pi_2=\overline{\pi_2}$, and
 $\pi(x+iy)=(\pi_1(x)-\pi_2(y))+i(\pi_1(y)+\pi_2(x))$. Let $\tilde{\pi}$ be the composition
of $\pi$ with the canonical identification $B(K)\hookrightarrow M_2(B(H))$ (see e.g. (\ref{eq2})), so
$$\tilde{\pi}(x+iy)=\left [ \begin{array}{cc}
                            \pi_1(x)-\pi_2(y) & -\pi_1(y)-\pi_2(x) \\
                            \pi_1(y)+\pi_2(x) &  \pi_1(x)-\pi_2(y)
                           \end{array} \right ]
\in M_2(B(H)).$$
The restriction of $\tilde{\pi}$ to $A$, say $\pi_{\circ}$, is a complete isometric inclusion from $A$ into $M_2(B(H))$. Also, for $x,v\in A$
\begin{eqnarray*}
\pi_{\circ}(x) \pi_{\circ}(v) &=& \left [ \begin{array}{cc}
                            \pi_1(x) & -\pi_2(x) \\
                            \pi_2(x) &  \pi_1(x)
                           \end{array} \right ]
                            \left [ \begin{array}{cc}
                            \pi_1(v) & -\pi_2(v) \\
                            \pi_2(v) &  \pi_1(v)
                           \end{array} \right ]\\
                           &=& \left [ \begin{array}{cc}
                            \pi_1(x)\pi_1(v)-\pi_2(x)\pi_2(v) & -\pi_1(x)\pi_2(v)-\pi_2(x)\pi_1(v) \\
                             \pi_1(x)\pi_2(v)+\pi_2(x)\pi_1(v) &  \pi_1(x)\pi_1(v)-\pi_2(x)\pi_2(v)
                           \end{array} \right ]\\
                           &=& \left [ \begin{array}{cc}
                            \pi_1(xv) & -\pi_2(xv) \\
                            \pi_2(xv) &  \pi_1(xv)
                           \end{array} \right ] = \pi_{\circ}(xv)
\end{eqnarray*}
Thus $\pi_{\circ}$ is a completely isometric homomorphism from $A$ into $M_2(B(H))\cong B(H^2)$.
\end{proof} 

\begin{theorem}
Let $A$ be a complex operator algebra. Then $A$ is a complexification of a real operator algebra $B$, i.e.,
$A=B_c$ completely isometrically if and only if there exists a complex conjugation ``$-$'' on $A$ such that
\begin{enumerate}[\upshape (i)]
\item ``$-$'' is a complete isometry, i.e., $\mnorm{x_{ij}}_n=\mnorm{\overline{x_{ij}}}_n$ for all $[x_{ij}]\in M_n(A)$ and $n\in \mathbb{N}$,
\item $\overline{xy}=\bar{x} \bar{y}$ for all $x,y\in A$. 
\end{enumerate}
\end{theorem}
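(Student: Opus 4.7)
The forward direction is essentially built into the construction of the complexification from Section 2 and the beginning of Section 3. If $A = B_c$ for a real operator algebra $B$, then $A = B + iB$ carries the canonical conjugation $\overline{x+iy} = x - iy$. Condition (i) is precisely the ``reasonable'' matrix-norm identity $\|[x_{ij}+iy_{ij}]\|_n = \|[x_{ij}-iy_{ij}]\|_n$ that was built into the definition of the complexification (as noted after equation (\ref{eq2}) and recalled at the start of Section 3). Condition (ii) follows by a direct calculation from the formula $(x+iy)(v+iw) = (xv-yw) + i(xw+yv)$: expanding $\overline{(x+iy)(v+iw)} = (xv-yw) - i(xw+yv)$ and comparing with $(x-iy)(v-iw)$ shows they agree.

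For the reverse direction, assume a conjugation ``$-$'' satisfying (i) and (ii) is given, and set $B = \{a \in A : \bar{a} = a\}$, a norm-closed real subspace of $A$. Using that ``$-$'' is conjugate-linear and period two, each $a\in A$ decomposes uniquely as $a = \operatorname{Re}(a) + i\operatorname{Im}(a)$ with $\operatorname{Re}(a) = (a+\bar a)/2$ and $\operatorname{Im}(a) = (a-\bar a)/(2i)$ both lying in $B$, giving $A = B \oplus iB$ as real vector spaces. Property (ii) applied to $x,y \in B$ yields $\overline{xy} = \bar{x}\bar{y} = xy$, so $B$ is closed under the multiplication inherited from $A$, and hence is a (real) algebra. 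Equipping $B$ with the matrix norms inherited from $A$ makes it a real operator space (as a closed subspace of a real operator space $A$), and since $M_n(B) \subset M_n(A)$ is a subalgebra of the Banach algebra $M_n(A)$, it satisfies the submultiplicativity condition (ii) of the BRS theorem just proved. Thus $B$ is a real operator algebra.

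It remains to identify $A$ with $B_c$ \emph{completely isometrically}. Consider the identity map $\iota : B + iB \to A$, viewed as a map from $B_c$ (with its canonical complexification matrix norms built from $B$) to $A$ (with its given matrix norms). The map $\iota$ is clearly a unital algebra isomorphism: the formula $(x+iy)(v+iw) = (xv-yw) + i(xw+yv)$ that defines multiplication on $B_c$ is precisely the multiplication in $A$. On $A$, the given matrix norms restrict to the inherited norms on $B$ (so $\|x + i\cdot 0\|_n = \|x\|_n$ for $x \in M_n(B)$) and satisfy $\|x+iy\|_n = \|x-iy\|_n$ by hypothesis (i). Hence the operator space structure that $A$ pulls back through $\iota$ is a \emph{reasonable} complex operator space structure on $B_c$ in the sense of \cite{RuaC}. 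The uniqueness statement in \cite[Theorem 3.1]{RuaC} then forces this structure to coincide completely isometrically with the canonical one on $B_c$, so $\iota$ is a completely isometric isomorphism of operator algebras.

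The main conceptual step is the last one: verifying that the given matrix norms on $A$ constitute a ``reasonable'' operator space complexification of $B$. Everything hinges on condition (i) being phrased at every matrix level (so that the hypothesis of Ruan's uniqueness theorem is met), and on the fact that the real subspace $B$ of fixed points is large enough to satisfy $A = B + iB$, which in turn depends on ``$-$'' being a conjugate-linear involution. Once these are in place the proof reduces to invoking \cite[Theorem 3.1]{RuaC}, with no further bare-handed matrix-norm estimates required.
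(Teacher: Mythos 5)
Your overall strategy is the same as the paper's: the forward direction is immediate from the construction of $B_c$, and for the converse one extracts $B$ as the fixed-point set of the conjugation, uses (ii) to see it is a subalgebra, and then appeals to Ruan's uniqueness of reasonable complexifications to identify $A$ with $B_c$ completely isometrically. (The paper reaches the same endpoint by citing \cite[Theorem 3.2]{RuaC}, which packages the ``fixed points of a completely isometric conjugation give a real operator space whose canonical complexification is $A$'' step in one stroke, rather than your two-step route of building $B$ by hand and then invoking the uniqueness theorem; the two are equivalent in substance.)

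There is, however, one step in your argument that does not go through as written: the appeal to the real BRS theorem to conclude that $B$ is a real operator algebra. The BRS theorem, both in its complex form and in the real version proved in this paper, is stated for \emph{approximately unital} algebras, and nothing in the hypotheses guarantees that $A$, let alone $B$, has a contractive approximate identity. Submultiplicativity of the matrix norms on $M_n(B)$ alone is therefore not enough to invoke it. The repair is easy and is exactly what the paper does: since $A$ is a complex operator algebra, it admits a completely isometric homomorphism into $B(H)$ for a complex Hilbert space $H$, and viewing $H$ as a real Hilbert space exhibits $A$, and hence its norm-closed real subalgebra $B$, as a concrete real operator algebra by definition --- no abstract characterization theorem is needed. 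With that substitution your proof is correct.
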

\begin{proof}
If $A=B_c$, for a real operator algebra $B$, then clearly $A$ satisfies the conditions in (i) and (ii) above.
Suppose that $A$ is a complex operator algebra such that (i) and (ii) hold. Since $A$ is a complex operator space such that the matrix norms satisfy (i), by \cite[Theorem 3.2]{RuaC} there exists a real operator space $B$ such that $A=B+iB$ completely
isometrically. Now the conjugation on $A$ is $\overline{x+iy}=x-iy$, and $B={\rm Re}(A)=\{x\in A: x=\bar{x}\}$. So if $x, y\in B$, then $xy=\bar{x}\bar{y}=\overline{xy}$. Thus $B$ is a subalgebra. Since $A$ is a complex operator algebra, it is also a real operator algebra, and $B$ is a (real) closed subalgebra of $A$. Thus $B$ is a (real) 
operator algebra. 
\end{proof}

Let $A\subset B(H)$ be a real operator algebra, for some real Hilbert space $H$. Define the unitization
 of $A$ as $A^1={\rm Span}_{
  \mathbb{R}}\{ A,I_H \}\subset B(H)$. Then $A\subset A^1\subset B(H)$ is a
closed subalgebra, and $A^1$ is a unital real operator algebra.

\begin{lemma}
 Let $A\subset B(H)$ be a real operator algebra. Then $(A_c)^1=(A^1)_c\subset B(H)_c$, completely isometrically.
\end{lemma}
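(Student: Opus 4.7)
The plan is to identify both $(A_c)^1$ and $(A^1)_c$ as concrete subspaces of the single ambient space $B(H)_c \cong B(H_c)$ and verify that they coincide as sets with the same inherited operator space structure, so that the complete isometry is literally the identity map.

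First I would unwind the definitions. Since $A \subset B(H)$, the canonical complexification (as described on page 1051 of \cite{RuaC} and the surrounding discussion in Section 2) gives $A_c = A + iA$ sitting inside $B(H)_c \cong B(H_c)$ with the operator space structure inherited from $B(H)_c$. Similarly, applying the same construction to the real operator algebra $A^1 = \text{Span}_{\mathbb{R}}\{A, I_H\} \subset B(H)$, we obtain $(A^1)_c = A^1 + iA^1 \subset B(H)_c$ with the inherited operator space structure. On the other side, $(A_c)^1 = \text{Span}_{\mathbb{C}}\{A_c, I_{H_c}\} = A_c + \mathbb{C}\, I_{H_c} \subset B(H)_c$, again with the inherited structure (noting that under the identification of (\ref{eq2}), the identity $I_{H_c}$ corresponds to $I_H + i\cdot 0$).

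Next I would perform the set-theoretic computation inside $B(H)_c$:
\begin{align*}
(A^1)_c &= A^1 + iA^1 = (A + \mathbb{R}\, I_H) + i(A + \mathbb{R}\, I_H) \\
        &= (A + iA) + (\mathbb{R}\, I_H + i\mathbb{R}\, I_H) = A_c + \mathbb{C}\, I_{H_c} = (A_c)^1.
\end{align*}
Thus the two spaces are literally equal as subsets of $B(H)_c$. Since both carry the operator space structure inherited from $B(H)_c$, the identity map between them is a complete isometry, which is exactly what the lemma asserts.

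There is no real obstacle here; the only subtle point worth stating explicitly is that the concrete unitization construction $(A_c)^1$ as a subspace of $B(H_c)$ agrees with the canonical complexification of the concrete real unitization $A^1 \subset B(H)$, which is immediate once both are viewed inside the common ambient space $B(H)_c$ via the identification from \textsection 2. Note in particular that no appeal to Meyer's theorem is needed for the \emph{statement} of the lemma, since both $A^1$ and $(A_c)^1$ are defined here as concrete spans; Meyer's theorem (established earlier in this section for the real case) ensures afterwards that the resulting structure is independent of the representation of $A$.
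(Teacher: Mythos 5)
Your proof is correct and takes essentially the same route as the paper: both unitizations are realized as concrete subsets of the common ambient space $B(H)_c$, a direct span computation shows they coincide as sets, and the complete isometry is then the identity map since both inherit their matrix norms from $B(H)_c$. The paper phrases the set equality as two inclusions rather than a one-line chain of equalities, but the content is identical.
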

\begin{proof}
Clearly both $(A_c)^1$ and $(A^1)_c$ are subsets of $B(H)_c$. 
Since $A\subset A^1$, $A_c\subset (A^1)_c$ and $I_{H}\in (A^1)_c$. So 
$(A_c)^1={\rm Span}\{A_c, I_H \}\subset (A^1)_c$. If $x \in (A^1)_c$, then 
\begin{eqnarray*}
 x &=&( \alpha a+  \alpha^{'} I_H) +i( \beta b +  \beta^{'} I_H)\\
&=&( \alpha a+i \beta b)+( \alpha^{'}+ i\beta^{'}) I_H \ \in \ {\rm Span}\{A_c +I_H\}\subset (A_c)^1.  
\end{eqnarray*}
Thus $(A_c)^1=(A^1)_c$.
\end{proof}
The following result shows that the unitization of real operator algebras is independent of the
choice of the Hilbert space $H$.
\begin{theorem}{(Real Version of Meyer's Theorem)}  
  Let $A\subseteq B(H)$ be a real operator algebra, and suppose that $I_H\notin
  A$. Let $\pi:A\longrightarrow B(K)$ be a
   completely contractive homomorphism, where $K$ is a
  real Hilbert space. We 
  extend $\pi$ to $\pi^{\circ}:A^1\longrightarrow B(K)$ by
  $\pi^{\circ}(a+\lambda I_H)=\pi(a)+\lambda I_K$, $a\in A$, $\lambda
  \in \mathbb{C}$. Then $\pi^{\circ}$ is a 
  completely contractive homomorphism.
\end{theorem}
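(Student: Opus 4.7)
The plan is to complexify and invoke the (complex) version of Meyer's theorem. First, by \cite[Theorem 2.1]{RuaC}, the complexification $\pi_c : A_c \to B(K)_c \cong B(K_c)$ is a complete contraction, and a direct computation using $\pi_c(x+iy) = \pi(x) + i\pi(y)$ verifies that $\pi_c$ is a homomorphism. Before invoking complex Meyer I would check that $I_H \notin A_c$: if $I_H = a + ib$ with $a,b\in A$, then applying the conjugation on $B(H)_c$ (which fixes $I_H$) forces $b = 0$ and so $I_H = a \in A$, contrary to hypothesis.

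Next, by the preceding lemma $(A^1)_c = (A_c)^1$ completely isometrically inside $B(H_c)$. So applying complex Meyer to $\pi_c$ yields a completely contractive homomorphic extension
$$(\pi_c)^\circ : (A_c)^1 \longrightarrow B(K_c), \qquad (\pi_c)^\circ(z + \mu I_H) = \pi_c(z) + \mu I_{K_c},$$
for $z\in A_c$ and $\mu \in \mathbb{C}$.

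Finally, I would restrict $(\pi_c)^\circ$ to $A^1 \subseteq (A^1)_c = (A_c)^1$. For $a + \lambda I_H \in A^1$ with $a\in A$ and $\lambda \in \mathbb{R}$,
$$(\pi_c)^\circ(a + \lambda I_H) = \pi(a) + \lambda I_{K_c} \in B(K) \subseteq B(K_c),$$
which coincides with $\pi^\circ(a + \lambda I_H)$. Since the inclusion $B(K) \hookrightarrow B(K)_c$ is a complete isometry and $(\pi_c)^\circ$ is a completely contractive homomorphism, the restriction is itself a completely contractive homomorphism from $A^1$ into $B(K)$; this restriction is exactly $\pi^\circ$. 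The only nontrivial technical ingredient is the identification $(A^1)_c = (A_c)^1$, supplied by the preceding lemma, so after that is in hand the complex Meyer theorem does all the remaining work and no serious obstacle remains.
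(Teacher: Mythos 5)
Your proposal is correct and follows essentially the same route as the paper: complexify $\pi$, identify $(A^1)_c$ with $(A_c)^1$ via the preceding lemma, apply the complex Meyer theorem to $(\pi_c)^{\circ}$, and restrict to $A^1$. The extra verification that $I_H\notin A_c$ is a sensible (if minor) addition the paper leaves implicit.
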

\begin{proof}
  Consider $\pi_c:A_c\longrightarrow B(K)_c\cong B(K_c),$ which is a
   completely contractive homomorphism. Now extend
  $\pi_c$ to $(\pi_c)^{\circ}:( A_c)^{1}\longrightarrow B(K_c)$ by
  $(\pi_c)^{\circ}(a+\lambda I_{H_c})=\pi_c(a)+\lambda I_{K_c}$, 
$a\in
  A_c$, $\lambda \in \mathbb{C}$. Then by the Meyer's Theorem for complex
  operator algebras (\cite[Corollary 2.1.15]{BLM}), $(\pi_c)^{\circ}$ is a  
  completely contractive homomorphism. 
Let $a+\lambda I_H\in A^1$, then $(\pi_c)^{\circ}(a+\lambda I_H)=\pi_c(a)+\lambda I_K=\pi(a)+\lambda I_K=\pi^{\circ}(a+\lambda I_H)$. Thus $(\pi_c)^{\circ}|_{A^1}=\pi^{\circ}$ and hence $\pi^{\circ}$ is 
a completely contractive homomorphism.
\end{proof}

\section{Real Injective Envelope}\label{rie}
In this section we study in more detail the real injective envelope of real operator spaces, which is mentioned by Ruan in \cite{RuaR}.  

\begin{definition}
  Let $X$ be a real operator space and let $Y$ be a real 
  operator space, such that there is a complete isometry
  $i:X\longrightarrow Y$. Then the pair $(Y,i)$ is called an 
  extension of $X$. An injective extension $(Y,i)$ is a {\em real injective
  envelope} of $X$ if there is no real injective space Z such that
  $i(X)\subset Z\subset Y$. We denote a real injective envelope
  by $(I(X), i)$ or simply by $I(X)$.
\end{definition}

By the Arveson-Wittstock-Hahn-Banach theorem for real operator spaces, {\cite[Theorem 3.1]{RuaR}}, 
$B(H)$ is an injective real operator space for any real Hilbert space $H$. Thus
a real operator
space $X\subset B(H)$ is injective if and only if it is the range of a completely contractive idempotent map from
$B(H)$ onto $X$.

\begin{definition}
If $(Y,i)$ is an extension of $X$, then $Y$ is a $rigid$
extension if $I_Y$ is the only completely contractive map which
restricts to an identity map on $X$. We say that $(Y,i)$ is an $essential$
extension of $X$, if whenever $u:X\longrightarrow Z$ is a completely
contractive map, for some real operator space $Z$, such that $u\circ i$ is a complete isometry, then $u$
is a complete isometry. 
\end{definition}

\begin{theorem}\label{realIE}
If a real operator $X$ is contained in a real injective operator space $W$, then there is an injective envelope
$Y$ of $X$ such that $X\subset Y\subset W$.
\end{theorem}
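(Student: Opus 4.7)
The plan is to adapt Hamana's classical construction of the injective envelope to the real setting, using the remark closing Section 2 that $B(H)=S^1(H)^*$ for every real Hilbert space $H$, so closed balls of $B(H)$ are weak-$*$ compact.

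Since $W$ is real injective, I would fix a completely isometric inclusion $W\subseteq B(H)$ for some real Hilbert space $H$ together with a completely contractive projection $E:B(H)\to W$. Consider
\[
\mathcal{S} \;=\; \bigl\{\phi:B(H)\to W \,:\, \phi\text{ real linear, completely contractive},\ \phi|_X=\mathrm{id}_X\bigr\},
\]
a nonempty (since $E\in\mathcal{S}$) semigroup under composition, pre-ordered by $\phi\preceq\psi$ iff $\|\phi_n(a)\|\le\|\psi_n(a)\|$ for all $n\in\mathbb{N}$ and $a\in M_n(B(H))$.

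Next I would apply Zorn's lemma to produce a minimal $\phi_0\in\mathcal{S}$. For a chain $\{\phi_\lambda\}\subseteq\mathcal{S}$, weak-$*$ compactness of bounded subsets of $B(H)$ together with a standard Tychonoff/diagonal argument yields a subnet along which $\phi_\lambda(a)$ converges weak-$*$ for every $a\in B(H)$ to some real linear $\phi_\infty:B(H)\to B(H)$. Composing with $E$ (i.e.\ replacing $\phi_\infty$ by $E\circ\phi_\infty$) brings the image back into $W$, and weak-$*$ lower semicontinuity of the norm at every matrix level shows the resulting map lies in $\mathcal{S}$ and bounds the chain below. Hamana's iteration/cluster-point trick applied to the iterates $\phi_0^k$ (each of which lies in $\mathcal{S}$ with $\phi_0^{k+1}\preceq\phi_0^k$) then upgrades $\phi_0$ to an idempotent minimal element $\phi\in\mathcal{S}$ satisfying $\phi\circ\phi_0=\phi_0\circ\phi=\phi$ and hence $\phi^2=\phi$. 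Its image
\[
Y \;:=\; \phi(B(H)) \;\subseteq\; W
\]
is then a real operator subspace of $W$ containing $X$ (since $\phi|_X=\mathrm{id}$) and is injective, because $\phi$ is a completely contractive projection of $B(H)$ onto $Y$.

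To see that $(Y,\text{inclusion})$ is a real injective envelope, suppose $X\subseteq Z\subseteq Y$ with $Z$ real injective, and fix a completely contractive projection $Q:Y\to Z$. Then $Q\circ\phi\in\mathcal{S}$ and $Q\circ\phi\preceq\phi$, so minimality of $\phi$ gives $\|Q\phi(a)\|_n=\|\phi(a)\|_n$ for every $n$ and every $a\in M_n(B(H))$. In particular $Q$ is a complete isometry on $Y$; combined with $Q^2=Q$, the identity $Q(Qy-y)=0$ together with $\ker Q=\{0\}$ forces $Qy=y$ for every $y\in Y$, so $Z=Q(Y)=Y$, as required.

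The main obstacle is Hamana's extraction and idempotence argument in the real setting: one must verify that pointwise weak-$*$ limits of completely contractive real linear maps remain completely contractive (weak-$*$ lower semicontinuity at every matrix level, combined with Ruan's matricial axioms), and that the iteration/cluster-point trick really produces an honest idempotent $\phi^2=\phi$—the latter being delicate because arbitrary completely contractive maps on $B(H)$ need not be weak-$*$ continuous, so one must arrange the subnets with care (e.g.\ by choosing an ultrafilter compatible with left and right composition by $\phi_0$). Once these analytic points are in place, the rest is a direct transcription of Hamana's classical scheme to the real category.
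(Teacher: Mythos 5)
Your outline is Hamana's construction, which is exactly the route the paper takes: it sets up $X$-seminorms and $X$-projections on the injective space $W$, produces a minimal $X$-seminorm by Zorn's lemma and weak-$*$ compactness (Lemma \ref{Xproj}, the real analogue of \cite[Lemma 4.2.2]{BLM}), shows the minimal seminorm comes from an $X$-projection whose range is the desired $Y$, and then uses rigidity (Lemma \ref{IEc}) to identify $Y$ as the injective envelope. Your minimality and rigidity arguments (in particular the step $Q(Qy-y)=0$ plus isometry of $Q$ on $Y$ forcing $Q=\mathrm{id}_Y$) are correct and match the paper's scheme; working with maps $B(H)\to W$ rather than $W\to W$ is an immaterial difference.

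The one step that does not work as you describe it is the upgrade to an idempotent. A point-weak-$*$ cluster point $\phi$ of the raw iterates $\phi_0^k$ will not in general satisfy $\phi\circ\phi_0=\phi_0\circ\phi=\phi$: there is no shift-invariant ultrafilter on $\mathbb{N}$, and $\phi_0$ need not be weak-$*$ continuous, so neither composition passes through the limit. The standard repair, and the one implicit in the paper's Lemma \ref{Xproj}(ii), is to take a point-weak-$*$ cluster point $v$ of the Ces\`aro means $\frac1n\sum_{k=1}^n\phi_0^k$. Since $\|\phi_0^k(a)\|\le\|\phi_0(a)\|$ for $k\ge 1$, the seminorm $\|v(\cdot)\|$ is dominated by the minimal seminorm $\|\phi_0(\cdot)\|$ and hence equals it; and since $\frac1n\bigl(\phi_0^{n+1}(a)-\phi_0(a)\bigr)\to 0$ in norm for every $a$ (no continuity of $\phi_0$ needed), one gets $\|v(\phi_0(a)-a)\|=0$, hence $\|\phi_0(\phi_0(a)-a)\|=0$, i.e.\ $\phi_0^2=\phi_0$. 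With that substitution your argument is a faithful transcription of the paper's proof to the real category.
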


To prove this theorem we need to define some more terminology, and we also need the following two lemmas, which are the real analogies of \cite[Lemma 4.2.2]{BLM} and
\cite[Lemma 4.2.4]{BLM}, respectively. The proof of Lemma \ref{Xproj} uses the fact that, if $X$ is a real operator space and $H$ is any real Hilbert space, then a bounded net $(u_t)$ in $CB(X,B(H))$ converges 
in ${\rm weak}^*$-topology to a $u\in CB(X,B(H))$ if and only if 
$$\ip{u_t(x)\zeta}{\eta}\to \ip{u(x)\zeta}{\eta}\ {\rm for\ all\ } x\in X,\; \zeta, \eta \in H.$$
\begin{definition}
Let $X$ is a subspace of a real operator space $W$. An {\em $X$-projection} on $W$ is a completely contractive (real) idempotent map $\phi: W\to W$ which restricts to the identity map on $X$. An {\em $X$-seminorm} on $W$ is a seminorm of the form $p(\cdot)=\norm{u(\cdot)}$, for a completely contractive (real) linear map $u:W \to W$ which restricts
to the identity map on $X$.
Define a partial order $\leq$ on the sets of all $X$-projections, by setting $\phi\leq \psi$ if 
$\phi \circ \psi=\psi \circ \phi=\phi$. This is also equivalent to $\Ran{\phi}\subset \Ran{\psi}$ and 
$\Ker{\psi}\subset \Ker{\phi}$.
\end{definition}

\begin{lemma}\label{Xproj}
 Let $X$ be a subspace of a real injective operator space $W$.
\begin{enumerate}[\upshape (i)]
 \item Any decreasing net of $X$-seminorms on $W$ has a lower bound. Hence there exists a minimal $X$-seminorm on $W$, by Zorn's lemma. Each $X$-seminorm majorizes a minimal $X$-seminorm.
\item If $p$ is a minimal $X$-seminorm on $W$, and if $p(\cdot)=\norm{u(\cdot)}$, for a completely
contractive linear map on $W$ which restricts to the identity map on $X$, then $u$ is a minimal $X$-projection.
\end{enumerate}
 \end{lemma}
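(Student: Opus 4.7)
For part (i), the plan is a weak*-compactness argument. Embed $W\subseteq B(H)$ using injectivity, and view the decreasing net $p_t(\cdot)=\|u_t(\cdot)\|$, with $u_t\colon W\to W$ completely contractive $X$-maps, as a net in the unit ball of $CB(W,B(H))$. By weak*-compactness together with the pointwise-WOT description of weak*-convergence recalled just above the statement, extract a cluster point $u$; composing with a completely contractive projection $B(H)\to W$ if necessary arranges $u\in CB(W,W)$. Then $u$ is a completely contractive $X$-map, and weak*-lower semicontinuity of the norm combined with the decreasing hypothesis yields $\|u(w)\|\le \inf_t p_t(w)$, so $p_u$ is a lower bound for the net. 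Zorn's lemma then produces a minimal $X$-seminorm, and applying Zorn to the set of $X$-seminorms dominated by a given one gives the final assertion of (i).

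For part (ii), assume $p=p_u$ is minimal. The strategy is to construct an $X$-projection $v$ realizing $p$ via Ces\`aro averaging, and then force $u=v$. Each power $u^n$ is a completely contractive $X$-map with $p_{u^n}\le p$, so minimality gives $p_{u^n}=p$; the averages $v_n=\tfrac{1}{n}\sum_{k=1}^n u^k$ are then $X$-maps with $p_{v_n}=p$. Let $v$ be a weak*-cluster point of $(v_n)$ in the unit ball of $CB(W,W)$; by lower semicontinuity $p_v\le p$, hence $p_v=p$. The norm estimate
$$\|u\circ v_n-v_n\|=\tfrac{1}{n}\|u^{n+1}-u\|\le \tfrac{2}{n}\longrightarrow 0$$
forces $u\circ v=v$ on passage to the weak*-limit, so $\Ran{v}\subseteq \{w\in W:u(w)=w\}$; consequently each $u^k$, and hence each $v_n$, acts as the identity on $\Ran{v}$, and a further weak*-limit gives $v^2=v$. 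Thus $v$ is an $X$-projection with $p_v=p$. To conclude $u=v$: the equality $\|u(w)\|=\|v(w)\|$ forces $u=0$ on $\Ker{v}$, while on $\Ran{v}=\{w:v(w)=w\}$ the identity $u\circ v=v$ gives $u(w)=v(w)=w$. Using the direct sum decomposition $W=\Ran{v}\oplus \Ker{v}$ induced by the idempotent $v$, we conclude $u=v$, so $u$ is itself an $X$-projection. Minimality follows by the same decomposition argument applied to any $X$-projection $\phi\le u$: one has $u\phi=\phi u=\phi$ and $p_\phi\le p_u=p$, so $p_\phi=p$, and then $\|u(\cdot)\|=\|\phi(\cdot)\|$ together with $u\phi=\phi$ yields $u=\phi$.

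The main technical delicacy is the weak*-compactness setup and the passage from $\|u\circ v_n-v_n\|\to 0$ to $u\circ v=v$: this requires left-composition by $u$ to be weak*-continuous on the unit ball of $CB(W,W)$, which is afforded precisely by the pointwise-WOT description of weak*-convergence stated in the hint preceding the lemma. Everything downstream is algebraic once the idempotent $v$ is in hand, and the forced equality $u=v$ is the clean payoff of having $p_u=p_v$ as seminorms on all of $W$.
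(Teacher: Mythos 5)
Part (i) of your proposal is correct and is exactly the route the paper intends (it is the argument of \cite[Lemma 4.2.2]{BLM}, for which the pointwise-WOT description of weak* convergence quoted before the lemma is supplied): weak*-compactness of the unit ball of $CB(W,B(H))$, weak* lower semicontinuity of the norm to get the lower bound, and composition with a completely contractive projection of $B(H)$ onto $W$ to land back in $CB(W,W)$.

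Part (ii) has a genuine gap at the one step you yourself single out as delicate. From $\norm{u\circ v_{n}-v_{n}}\to 0$ and $v_{n_\mu}\to v$ weak* you conclude $u\circ v=v$, justifying this by asserting that left-composition by $u$ is weak*-continuous ``because of'' the pointwise-WOT description of weak* convergence. That description only tells you what convergence of the net means, namely $\ip{v_{n_\mu}(w)\zeta}{\eta}\to\ip{v(w)\zeta}{\eta}$ for all $w,\zeta,\eta$; it does not make the \emph{fixed} map $u$ continuous for the WOT on bounded sets. A completely contractive $u:W\to W$ is only norm-continuous, and need not be WOT-continuous (already on $B(H)$, composing a singular state with multiplication by a fixed operator gives a completely bounded map that is not WOT-continuous on the ball). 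So from $v_{n_\mu}(w)\to v(w)$ in WOT you cannot infer $u(v_{n_\mu}(w))\to u(v(w))$ in WOT, and the identity $u\circ v=v$, on which everything downstream in your part (ii) rests ($v^2=v$, $u=v$), is unproved.

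The repair is to discard the cluster point $v$ altogether; no second weak* limit is needed. You already have the two facts that drive the standard argument: minimality forces $\norm{v_n(w)}=\norm{u(w)}$ for all $w$, and $v_n$ commutes with $u$ (both being polynomials in $u$). Apply the first identity to the element $w-u(w)$:
$$\norm{u(w)-u^{2}(w)}=\norm{u\bigl(w-u(w)\bigr)}=\norm{v_n\bigl(w-u(w)\bigr)}=\norm{v_n(w)-u\bigl(v_n(w)\bigr)}\leq \frac{2\norm{w}}{n}\longrightarrow 0,$$
so $u^{2}=u$ directly. Minimality of the projection $u$ then follows by the same trick: if $\phi$ is an $X$-projection with $\phi\leq u$, then $p_\phi=p_u$ by minimality, and $\norm{u(w)-\phi(w)}=\norm{u(w-\phi(w))}=\norm{\phi(w-\phi(w))}=0$, so $\phi=u$. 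This is the proof of \cite[Lemma 4.2.2]{BLM} that the paper is quoting.
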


\begin{lemma}\label{IEc}
Let $(Y,i)$ be an extension of real operator space $X$ such that $Y$ is injective. Then the following are equivalent:
\begin{enumerate}[\upshape (i)]
\item $Y$ is an injective envelope of $X$,
\item $Y$ is a rigid extension of $X$,
\item $Y$ is an essential extension of $X$.
\end{enumerate}
\end{lemma}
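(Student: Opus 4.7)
The plan is to prove the cycle (i) $\Rightarrow$ (ii) $\Rightarrow$ (iii) $\Rightarrow$ (i), following the classical Hamana--Choi--Effros scheme. The first implication is the substantial one and relies on Lemma \ref{Xproj} together with a Ces\`aro / weak*-compactness trick; the other two are short abstract arguments using only injectivity and the definitions.

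For (i) $\Rightarrow$ (ii), let $\phi: Y \to Y$ be completely contractive with $\phi \circ i = i$. The prescription $p(y) = \norm{\phi(y)}$ is an $X$-seminorm on $Y$, so by Lemma \ref{Xproj} it majorizes a minimal $X$-seminorm $q(\cdot) = \norm{\psi(\cdot)}$ realized by a minimal $X$-projection $\psi$. The range $\Ran{\psi}$ is the image of a completely contractive idempotent on the injective space $Y$, hence injective, and satisfies $i(X) \subset \Ran{\psi} \subset Y$; the envelope hypothesis forces $\Ran{\psi} = Y$, so $\psi = I_Y$ and consequently $\norm{\phi(y)} \geq \norm{y}$. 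Running the identical argument at each matrix level shows $\phi$ is a complete isometry. To upgrade to $\phi = I_Y$, form the Ces\`aro averages $\Phi_n = \frac{1}{n}\sum_{k=0}^{n-1} \phi^k$; each $\Phi_n$ is completely contractive and fixes $i(X)$, and a direct computation gives $\norm{\Phi_n \circ \phi - \Phi_n} = \norm{(\phi^n - I_Y)/n} \leq 2/n$. Choosing a weak*-closed embedding $Y \hookrightarrow B(H)$ (possible because an injective real operator space is a dual operator space) and using the point-weak* compactness of bounded nets in $CB(Y, B(H))$ recorded before Lemma \ref{Xproj}, extract a cluster point $\Phi_\infty \in CB(Y, Y)$, still completely contractive and fixing $i(X)$; passing to the weak*-limit in the previous norm estimate yields $\Phi_\infty \circ \phi = \Phi_\infty$. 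The first half of the argument applied to $\Phi_\infty$ makes it a complete isometry, in particular injective, so $\Phi_\infty \circ \phi = \Phi_\infty$ forces $\phi = I_Y$.

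For (ii) $\Rightarrow$ (iii), suppose $u: Y \to Z$ is completely contractive with $u \circ i$ a complete isometry. Injectivity of $Y$ extends the completely contractive map $(u \circ i)^{-1}: u(i(X)) \to i(X) \subset Y$ to a completely contractive $v: Z \to Y$; then $v \circ u: Y \to Y$ is completely contractive and fixes $i(X)$, so rigidity gives $v \circ u = I_Y$. At every matrix level $\norm{u_n([y_{ij}])} \geq \norm{v_n u_n([y_{ij}])} = \norm{[y_{ij}]}$, which together with complete contractivity makes $u$ a complete isometry. For (iii) $\Rightarrow$ (i), if $X \subset Z \subset Y$ with $Z$ injective, the injectivity of $Z$ yields a completely contractive projection $\pi: Y \to Z$; the composition $\pi \circ i$ is the inclusion $X \hookrightarrow Z$, a complete isometry, so essentiality makes $\pi: Y \to Z$ a complete isometry, forcing $Z = \pi(Y) = Y$.

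The main obstacle is the final step of (i) $\Rightarrow$ (ii), upgrading ``$\phi$ is a complete isometry'' to ``$\phi = I_Y$'': the $X$-projection machinery alone yields only the isometric statement, and the Ces\`aro / weak*-cluster-point trick is genuinely needed to promote isometry to the identity. The one delicate technical point is that the cluster point $\Phi_\infty$ must actually land in $Y$ rather than the ambient $B(H)$, which is why a weak*-closed embedding of the injective space $Y$ is used.
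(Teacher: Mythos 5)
Your overall architecture --- minimal $X$-seminorms to show that any complete contraction fixing $i(X)$ is an isometry, a Ces\`aro argument to upgrade isometry to the identity, and then the two short abstract implications --- is exactly the standard Hamana scheme that the paper itself invokes (it gives no proof of this lemma, deferring to \cite[Lemma 4.2.4]{BLM}), and your (ii) $\Rightarrow$ (iii) $\Rightarrow$ (i) and the first half of (i) $\Rightarrow$ (ii) are correct. The one step that does not stand as written is the extraction of the cluster point $\Phi_\infty$: you justify it by asserting that an injective real operator space is a dual operator space, and that is false in general. Already in the complex case, $C(K)$ for a Stonean but non-hyperstonean $K$ is an injective operator space that is not a dual Banach space, so there is no weak*-closed embedding to appeal to. What the compactness fact recorded before Lemma \ref{Xproj} actually provides is a point-weak* cluster point $\Phi_\infty$ in the ball of $CB(Y,B(H))$ for an arbitrary embedding $Y\subset B(H)$; this map takes values in $B(H)$, not in $Y$, so it cannot be fed directly back into the $X$-seminorm machinery, whose maps go from $W$ to $W$. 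The step is repairable --- compose $\Phi_\infty$ with a completely contractive projection of $B(H)$ onto $Y$, which exists precisely because $Y$ is injective, and apply the first half of your argument to $P\circ\Phi_\infty$, which is then injective and still satisfies $P\Phi_\infty\circ\phi=P\Phi_\infty$ --- but the justification you give is wrong.

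More to the point, the limit is unnecessary, and you already hold every ingredient needed to see this. Each Ces\`aro average $\Phi_n$ is itself a complete contraction restricting to the identity on $i(X)$, so the first half of your own argument shows that each $\Phi_n$ is an isometry. Combined with your telescoping identity this gives, for every $y\in Y$ and every $n$,
$$\norm{\phi(y)-y}=\norm{\Phi_n(\phi(y))-\Phi_n(y)}=\frac{1}{n}\norm{\phi^n(y)-y}\le \frac{2}{n}\norm{y},$$
whence $\phi=I_Y$ with no compactness, no cluster point, and no claim about dual structure. This is how \cite[Lemma 4.2.4]{BLM} closes the argument, and it is the version you should use.
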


Using the rigidity property of injective envelopes and a standard diagram chase, we can show that if $(Y_1,i_1)$ and $(Y_2,i_2)$ are two injective envelopes of a real operator space $X$ then $Y_1$ and $Y_2$ are completely isometrically isomorphic via some map $u$ such that $u\circ i_1=i_2$. Hence the real injective envelope, if exists,
is unique.  The argument in \cite[Theorem 4.2.6]{BLM}, and Lemma \ref{Xproj} and Lemma \ref{IEc}, prove Theorem \ref{realIE}. Thus
the real injective envelope exists.

\begin{lemma}\label{lemmainj}
  Let $X$ be a real operator space. Then $X$
  is real injective iff $X_{c}$ is (complex) injective.
\end{lemma}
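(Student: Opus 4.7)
The plan is to treat the two implications separately, passing through the complexification in both directions and exploiting the conjugation ``$-$'' on $X_c$.

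For the forward direction, assume $X$ is real injective. Embed $X$ completely isometrically into $B(H)$ for some real Hilbert space $H$. By the real injectivity (and real Arveson--Wittstock), there is a completely contractive real idempotent $\phi\colon B(H)\to B(H)$ with range $X$. Then $\phi_c\colon B(H)_c\to B(H)_c$ is complex linear, completely contractive (by the preservation of complete contractions under complexification, Theorem~3.1 of \cite{RuaC}), and it is an idempotent since $\phi_c^2(x+iy)=\phi^2(x)+i\phi^2(y)=\phi_c(x+iy)$. Using the identification $B(H)_c\cong B(H_c)$, the range of $\phi_c$ is precisely $X+iX=X_c$. Hence $X_c$ is the range of a completely contractive complex idempotent on the (complex) injective space $B(H_c)$, so $X_c$ is complex injective.

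For the reverse direction, assume $X_c$ is complex injective. Suppose $E\subset F$ is a completely isometric inclusion of real operator spaces and $u\colon E\to X$ is real completely contractive; we must extend $u$ completely contractively to $F$. Complexify to get $E_c\subset F_c$ completely isometrically and $u_c\colon E_c\to X_c$ completely contractive. Using injectivity of $X_c$, pick a complex completely contractive extension $\tilde v\colon F_c\to X_c$. This map need not send $F$ into $X$, so I will symmetrize it. Define $\overline{\tilde v}(z)=\overline{\tilde v(\bar z)}$, which is again complex linear and completely contractive (conjugation is a completely isometric anti-linear involution on both $F_c$ and $X_c$), and which still extends $u_c$ because $u_c$ commutes with conjugation. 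Now put
\[
w=\tfrac{1}{2}\bigl(\tilde v+\overline{\tilde v}\bigr)\colon F_c\longrightarrow X_c.
\]
Then $w$ is complex linear and completely contractive, extends $u_c$, and satisfies $\bar w=w$. The last identity forces $w$ to send the real part of $F_c$, namely $F$, into the real part of $X_c$, namely $X$. Thus $\tilde u:=w|_F\colon F\to X$ is the desired real completely contractive extension of $u$.

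The main subtlety, and the only step that is not a routine complexification, is the averaging argument in the reverse direction: one has to verify that the symmetrization $w$ is still an extension of $u_c$ (not merely of $u$) and that it actually lands in $X$ when restricted to $F$. Both verifications reduce to the defining property of the conjugation on $X_c$ together with the fact that $u_c$ is the complexification of a real map. Once those two checks are in place, the rest is immediate from Ruan's complexification results already invoked in the paper.
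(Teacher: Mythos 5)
Your proof is correct, and both directions rest on the same underlying ideas as the paper's: complexifying a projection in the forward direction, and in the reverse direction realifying a complex map by averaging it with its conjugate $\overline{v}(z)=\overline{v(\bar z)}$. The forward halves are identical. In the reverse half you differ in which object you realify: you take an arbitrary completely contractive extension $\tilde v\colon F_c\to X_c$ of $u_c$ (working with the extension-property formulation of injectivity) and symmetrize it, whereas the paper takes a completely contractive idempotent $Q\colon B(H_c)\to X_c$ and shows that $\mathrm{Re}(Q)=\tfrac{1}{2}(Q+\overline{Q})$ is a real completely contractive idempotent of $B(H)$ onto $X$. Your variant sidesteps the one genuinely fiddly verification in the paper's argument, namely that $\mathrm{Re}(Q)$ is still idempotent, which requires first establishing $Q\overline{Q}=\overline{Q}$ and $\overline{Q}Q=Q$ from the fact that both are idempotents onto $X_c$; in your setting the only things to check are that $w$ still extends $u_c$ (because $\overline{u_c}=u_c$) and that $\overline{w}=w$ forces $w(F)\subset X$, both of which you do. The small price is that you must quantify over all completely isometric inclusions $E\subset F$ and know that $E_c\subset F_c$ is again completely isometric and that $\norm{u_c}_{cb}=\norm{u}_{cb}$; both are supplied by Ruan's complexification theorem, which the paper already invokes. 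Either route is complete.
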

\begin{proof}
  First suppose that $X$ is real injective, then there exists a completely contractive
  idempotent
  $P$, from $B(H)$ onto $Z$, for some real Hilbert space $H$. The
  complexification of $P$, $P_{c}:B(H)_{c}\longrightarrow X_{c}$ is clearly a 
(complex) completely contractive idempotent onto $X_c$. 
Since $B(H)_{c}\cong B(H_{c})$, completely isometrically, $Z_{c}$ is a (complex) injective operator space.
 Conversely, let $X_{c}$ be a (complex) injective space and
  $Q:B(K)\longrightarrow X_{c}$ be a completely contractive (complex) linear surjective idempotent. 
Let $K=H_{c}$ where $H$ is a real Hilbert space, 
so $Q:B(H_{c})\cong
  B(H)_{c}\longrightarrow X_{c}$. 
Consider ${\rm Re}(Q)=\frac{Q+\bar{Q}}{2}$, where
  $\bar{Q}(T+iS)=\overline{Q(T-iS)}$. For any $T+iS\in B(H)_c$, 
$$\overline{Q}^2(T+iS)=\overline{Q}(\overline{Q(T-iS)})=\overline{Q^2(T-iS)}=\overline{Q(T-iS)}=\overline{Q}(T+iS).$$
Let $x+iy\in X_c$ and suppose that $Q(T+iS)=x-iy$ for some $T,S\in B(H)$. Then $\overline{Q}(T-iS)=x+iy$. Thus $\overline{Q}$ is an idempotent onto $X_c$. 
So $\overline{Q}Q(T+iS)=Q(T+iS)$ and $Q\overline{Q}(T+iS)=\overline{Q}(T+iS)$, for all $T+iS\in B(H)_c$. Thus for $T\in B(H)$, 
$({\rm Re}(Q))^2(T)=\frac{Q^2(T)+Q\overline{Q}(T)+\overline{Q}Q(T)+\overline{Q}^2(T)}{4}=\frac{2Q(T)+2\overline{Q}(T)}{4}={\rm Re}(Q)(T)$. If $x\in X\subset X_c$, then $Q(x)=x$ and $\overline{Q}(x)=x$, so ${\rm Re}(Q)(x)=x$. This shows that  
 ${\rm Re}(Q):B(H)\longrightarrow X$ is a
  (real) linear completely contractive idempotent onto $X$. Hence $X$ is real injective.
\end{proof}

The next result is a real analogy of a Choi-Effros theorem (see e.g., \cite[Theorem 1.3.13]{BLM}). It is shown in the last paragraph of \cite[pg. 492]{RuaR}) that the argument in the complex version of the theorem can be reproduced to prove part (i) of the following result. 

\begin{theorem}[Choi-Effros]{\label{ChoiEff}}
  Let $A$ be a unital real $C^*$-algebra and let $\phi: A\longrightarrow A$
  be a selfadjoint, completely positive, unital, idempotent
  map. Then
  \begin{enumerate}[\upshape (i)]
  \item $R=\Ran{\phi}$ is a unital real $C^*$-algebra with respect to the
  original norm, involution, and vector space structure, but new
  product $r_1\circ_{\phi}r_2=\phi(r_1r_2)$,
  \item $\phi(ar)=\phi(\phi(a)r)$ and $\phi(ra)=\phi(r\phi(a))$, for
  $r\in R$ and $a\in A$,
  \item If $B$ is the $C^*$-algebra generated by the set $R$, and if $R$ is given the product $\circ_{\phi}$, then $\phi\mid_B$ is a $*$-homomorphism from $B$ onto $R$.
\end{enumerate}
\end{theorem}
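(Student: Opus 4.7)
The plan is to complexify $\phi$ and reduce the whole statement to the (complex) Choi--Effros theorem \cite[Theorem 1.3.13]{BLM}, following the pattern used throughout the paper. Define $\phi_c : A_c \to A_c$ in the usual way. I would first check that $\phi_c$ inherits all the hypotheses of the complex Choi--Effros theorem, then apply that theorem, then descend the resulting identities back to the real case.

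\textbf{Verifying the hypotheses for $\phi_c$.} Unitality and idempotence are immediate from $\phi_c(x+iy) = \phi(x) + i\phi(y)$ together with $\phi(1)=1$ and $\phi^2 = \phi$. Selfadjointness of $\phi_c$ follows from $(x+iy)^* = x^* - iy^*$ and $\phi(x^*)=\phi(x)^*$. The only delicate point is complete positivity of $\phi_c$: I would use the canonical $*$-monomorphism $M_n(A_c) \hookrightarrow M_{2n}(A)$ appearing in equation (\ref{eq2}), which is isometric and preserves positivity in both directions, to identify $(\phi_c)_n$ with the restriction of $\phi_{2n}$ to the image. Since $\phi_{2n}$ sends positive elements to positive elements, so does $(\phi_c)_n$.

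\textbf{Descent to the real case.} Applying the complex Choi--Effros theorem to $\phi_c$ produces the corresponding complex statements for $R_c := \Ran{\phi_c} = R + iR$, the complexification of $R$, with new product $\circ_{\phi_c}$. For (ii): given $r \in R$ and $a \in A$, both $ar$ and $\phi(a)r$ lie in $A$, so $\phi_c$ acts as $\phi$ on them, and reading off the complex identity $\phi_c(ar) = \phi_c(\phi_c(a)r)$ gives the real identity $\phi(ar) = \phi(\phi(a)r)$; the other identity is symmetric. For (iii): let $B$ be the real $C^*$-algebra generated by $R$ in $A$; its complexification $B_c$ is the complex $C^*$-algebra generated by $R$ in $A_c$, since any complex $*$-subalgebra containing $R$ automatically contains $iR$, hence $R_c$, hence $B_c$. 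By the complex part (iii), $\phi_c|_{B_c}$ is a $*$-homomorphism from $B_c$ onto $R_c$ with respect to $\circ_{\phi_c}$. Restricting to the real $*$-subalgebra $B$, on which $\phi_c$ agrees with $\phi$ and on which $\circ_{\phi_c}$ restricts to $\circ_\phi$, yields the real $*$-homomorphism statement in (iii). Part (i), already noted above to be in \cite{RuaR}, can also be seen as an immediate consequence: $R$ is a norm-closed real $*$-subalgebra of the complex $C^*$-algebra $(R_c,\circ_{\phi_c})$ because $\phi(r_1 r_2)\in R \subset A$ for $r_1,r_2 \in R$ and $R$ is involution-closed by selfadjointness of $\phi$.

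\textbf{Main obstacle.} The one step that requires genuine attention is the complete positivity of $\phi_c$; the rest of the argument is a routine unwinding of definitions, exploiting the fact that a complex identity applied to real inputs produces a real output, so that $\phi_c$ can be replaced by $\phi$ throughout. A secondary minor issue is being careful that the two products $\circ_{\phi_c}$ and $\circ_\phi$, and the two norms on $R$ and on $R \subset R_c$, match under the identifications; these are straightforward from the construction of the complexification but need to be explicitly recorded when passing between $R$ and $R_c$.
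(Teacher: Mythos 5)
Your proposal is correct and follows essentially the same route as the paper: complexify $\phi$, apply the complex Choi--Effros theorem to $\phi_c$ on $A_c$, and descend the identities to $R$, $B$, and $\circ_\phi$ by restriction. The only real divergence is in the one step you flag as delicate --- the paper gets complete positivity of $\phi_c$ indirectly (a selfadjoint unital completely positive map is completely contractive by Ruan, hence $\phi_c$ is a unital complete contraction, hence completely positive on the complex $C^*$-algebra $A_c$), whereas you verify it directly through the real $*$-embedding $M_n(A_c)\hookrightarrow M_{2n}(A)$; both work, and your two-inclusion identification of the complex $C^*$-algebra generated by $R$ with $B_c$ is in fact tidier than the paper's span-and-limits argument.
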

\begin{proof}
Let $\phi: A\longrightarrow A$ be a selfadjoint, completely positive, unital idempotent map. Then $\phi$ is 
completely contractive, by \cite[Proposition 4.1]{RuaR}, and hence $\phi_c:A_c\longrightarrow A_c$ is a completely
contractive, unital idempotent onto $\Ran{\phi}_c$. By the Choi-Effros Lemma for complex operator systems, \cite[Theorem 1.3.13]{BLM}, 
 $\Ran{\phi}_c$ is a $C^*$-algebra with a new product given by 
$(r_1+ir_2)\circ (s_1+is_2)=\phi_c((r_1+ir_2) (s_1+is_2))$, $r_1,r_2,s_1,s_2\in R$. For $r,s\in R$, $r\circ s=\phi_c(rs)=\phi(rs)\in R$. By \cite[Proposition 5.1.3]{Li}, $R$ is a real $C^*$-algebra with this product. Further, 
$\phi(ar)=\phi_c(ar)=\phi_c(\phi_c(a)r)=\phi(\phi(a)r)$, and similarly $\phi(ra)=\phi(r\phi(a))$, for all $a\in A, r\in R$. 

Let $C=C^*(R_c)$ be the (complex) $C^*$-algebra generated by $R_c$ in $A_c$, then by \cite[Theorem 1.3.13 (iii)]{BLM}, $(\phi_c) |_C$ is a $*$-homomorphism
from $C$ onto $R_c$. Let $B=C^*(R)$ be the real $C^*$-subalgebra of $A$ generated by $R$. 
It is easy to see that $C^*(R_c)=C^*(R)_c$. Clearly, since $C^*(R)\subset C^*(R_c)$, $C^*(R)_c\subset C^*(R_c)$. 
Also, 
$${\rm \bf S}_{\mathbb{C}}\{ s_1s_2\hdots s_n\; :\; n\in \mathbb{N}\}={\rm \bf S}_{\mathbb{R}}\{r_1r_2\hdots r_n\; :\; n\in \mathbb{N}\}+i {\rm \bf S}_{\mathbb{R}}\{r_1^{'}r_2^{'}\hdots r_n^{'}\; :\; n\in \mathbb{N}\},$$ 
where $s_i\in R_c$,   and $r_i, r_i^{'} \in R$, and ${\rm \bf S}$ means ``Span''.
If
$a\in C^*(R_c)\subset A_c $ then $a=x+iy$ is the limit of $a_t\in {\rm \bf S}_{\mathbb{C}}\{ s_1s_2\hdots s_n\;  :\; n\in \mathbb{N}\}$. Then $a_t=x_t+iy_t$, where $x_t\in {\rm \bf S}_{\mathbb{R}}\{r_1r_2\hdots r_n\; :\; n\in \mathbb{N}\}$,
$y_t\in {\rm \bf S}_{\mathbb{R}}\{r_1^{'}r_2^{'}\hdots r_n^{'}\; :\; n\in \mathbb{N}\}$. Also, if we suppose that $A_c\subset B(H)_c$, for some real Hilbert space $H$, then it is easy to see that $x_t\longrightarrow x$, $y_t\longrightarrow y$. 
Hence, $(\phi_c)_B=\phi |_B$ is a $*$-homomorphism
from $B$ onto $R$. 
\end{proof}

\begin{xrem}
 Let $A$ and $B$ be real $C^*$-algebras, and let $\phi: A\longrightarrow B$ be a unital completely contractive map. Then $\phi_c$ is a (complex) 
completely contractive linear map between complex $C^*$-algebras $A_c$ and $B_c$. So $\phi_c$ is completely positive and hence selfadjoint. Since $\phi=\phi_c|_A$, $\phi$ is also selfadjoint. Thus a completely contractive unital map between real $C^*$-algebras is selfadjoint.     
As a result, we can replace the completely positive
  and selfadjoint condition in Theorem \ref{ChoiEff} above, with the condition
  that $\phi$ is completely contractive.
\end{xrem}
\begin{theorem}\label{IE}
  $X$ be a unital real operator space, then there is an injective envelope
  $I(X)$ which is a unital real $C^*$-algebra.
\end{theorem}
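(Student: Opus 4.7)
The plan is to follow the complex blueprint (Theorem 4.2.8 of \cite{BLM}) using only the real tools already developed above: the existence of the real injective envelope (Theorem \ref{realIE}), its realization as the range of a minimal $X$-projection (Lemma \ref{Xproj}(ii)), and the real Choi--Effros theorem (Theorem \ref{ChoiEff}) together with the remark that follows it.

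First I would use the assumption that $X$ is unital to embed $X$ completely isometrically and unitally as a subspace of $B(H)$ for some real Hilbert space $H$, so that $1_X = I_H$. Since $B(H)$ is a real injective operator space by the Arveson--Wittstock--Hahn--Banach theorem, Theorem \ref{realIE} produces an injective envelope $I(X)$ sitting between $X$ and $B(H)$. By the construction underlying Theorem \ref{realIE} (i.e.\ Lemma \ref{Xproj}(ii)), we may take $I(X) = \Ran(\phi)$ for a minimal $X$-projection $\phi \colon B(H) \to B(H)$; in particular $\phi$ is a completely contractive idempotent whose restriction to $X$ is the identity, and since $I_H \in X$ we get $\phi(I_H) = I_H$, i.e.\ $\phi$ is unital.

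Next I would invoke the remark following Theorem \ref{ChoiEff}: a unital completely contractive map between real $C^*$-algebras is automatically selfadjoint and completely positive (the remark proves this by complexifying and using the complex fact). Hence $\phi \colon B(H) \to B(H)$ is a unital, selfadjoint, completely positive, completely contractive idempotent, and Theorem \ref{ChoiEff}(i) applies: $R = \Ran(\phi) = I(X)$ is a unital real $C^*$-algebra under the original norm, involution and vector space structure, with product $r_1 \circ_\phi r_2 = \phi(r_1 r_2)$. Since the underlying real operator space structure of $I(X)$ as a subspace of $B(H)$ is unchanged, this gives the desired $C^*$-algebra structure on the injective envelope.

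The only point that needs a word of care is the standard verification that the new multiplication on $R$ is consistent with its operator space structure in the sense needed to call $R$ a real $C^*$-algebra (rather than just a $C^*$-algebra abstractly): Theorem \ref{ChoiEff}(iii) produces a $*$-homomorphism from the real $C^*$-algebra $B = C^*(R) \subset B(H)$ onto $R$ (with the Choi--Effros product), and by Lemma \ref{lemma1} this homomorphism is automatically completely contractive, so the operator space norms on $R$ coming from $B(H)$ and from any faithful $*$-representation of $R$ agree. I do not expect any genuinely new obstacle beyond this bookkeeping, because every nontrivial ingredient (existence and rigidity of the real injective envelope, the automatic self-adjointness/positivity from the remark, and the real Choi--Effros construction) has already been established in the excerpt.
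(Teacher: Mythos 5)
Your proof is correct and follows essentially the same route as the paper: embed $X$ unitally in $B(H)$, obtain a unital completely contractive idempotent $\phi$ of $B(H)$ onto the injective envelope $I(X)$, note via the remark after Theorem \ref{ChoiEff} that $\phi$ is automatically selfadjoint and completely positive, and apply the real Choi--Effros theorem to equip $I(X)=\Ran(\phi)$ with a unital real $C^*$-algebra structure. The only cosmetic difference is that the paper produces $\phi$ by extending the identity map on $I(X)$ using injectivity, whereas you take a minimal $X$-projection; both yield the same kind of idempotent and the rest of the argument is identical.
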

\begin{proof}
  Let $X\subset B(H)$ for some real Hilbert space $H$. Since $B(H)$
  is injective, we can find an injective envelope of $X$ such that
  $X\subset{I(X)\subset B(H)}$. As
  $I(X)$ is injective, so the identity map on
  $I(X)$ extends to $\phi: B(H)\longrightarrow
  B(H)$ such that $\phi$ is a completely contractive idempotent
  onto $I(X)$. By Theorem \ref{ChoiEff} and the remark above, 
$\Ran{\phi}=I(X)$ becomes a unital real
  $C^*$-algebra with the new product.
\end{proof}

\begin{proposition} 
  Let $X$ be a real (or complex) Banach space, then 
  $\Min{I(X)}=I(\Min{X})$, completely isometrically.
\end{proposition}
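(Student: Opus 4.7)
The plan is to show that $\Min{I(X)}$ is an injective and rigid extension of $\Min{X}$, and then invoke the uniqueness of the real operator space injective envelope (Lemma~\ref{IEc} and the remark that follows it). The statement for complex $X$ is essentially the analogous one from Blecher-Le Merdy; the new content is the real case, and the argument below handles both uniformly.

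First, I invoke the classical real Goodner-Nachbin-Kelley theorem: the Banach-space injective envelope $I(X)$ of a real Banach space $X$ exists, and every $1$-injective real Banach space is isometrically isomorphic to $C(K,\mathbb{R})$ for some extremally disconnected (Stonean) compact Hausdorff space $K$. In particular, $I(X)\cong C(K,\mathbb{R})$ isometrically, so $I(X)_c\cong C(K)$ has Stonean spectrum.

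Second, I verify that $\Min{I(X)}$ is real operator space injective. By the earlier proposition on $\Min$ and complexification applied to $I(X)$, and by the fact that a commutative $C^*$-algebra carries its canonical (minimal) operator space structure, one has
\[
(\Min{I(X)})_c \;=\; \Min{I(X)_c} \;=\; \Min{C(K)} \;=\; C(K).
\]
Since $K$ is Stonean, $C(K)$ is a (complex) injective operator space by the standard complex result. Lemma~\ref{lemmainj} then gives that $\Min{I(X)}$ is real operator space injective.

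Third, I check rigidity. Suppose $\phi:\Min{I(X)}\longrightarrow \Min{I(X)}$ is a completely contractive map restricting to the identity on $\Min{X}$. Then, forgetting operator space structure, $\phi$ is a Banach-space contraction $I(X)\to I(X)$ that is the identity on $X$. By the rigidity property of the Banach-space injective envelope $I(X)$, $\phi=\operatorname{Id}$. So $\Min{I(X)}$ is a rigid extension of $\Min{X}$. Applying Lemma~\ref{IEc} (injective $+$ rigid $\Longrightarrow$ injective envelope), $\Min{I(X)}$ is an operator space injective envelope of $\Min{X}$, and uniqueness of the injective envelope yields the identification $\Min{I(X)}=I(\Min{X})$ completely isometrically.

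The main obstacle is the reliance on the real Goodner-Nachbin characterization of $1$-injective real Banach spaces as Stonean $C(K,\mathbb{R})$. If one prefers to avoid this, there is an alternative: embed $X\hookrightarrow C(\Ball{X^*},\mathbb{R})$ isometrically, set $E:=I(C(\Ball{X^*},\mathbb{R}))$ which by Theorem~\ref{IE} is a unital real $C^*$-algebra; a Hamana-style rigidity argument (every left multiplication by an element of a commutative subalgebra extends uniquely, and so must equal the corresponding right multiplication) shows $E$ is commutative, hence a minimal operator space. Then Theorem~\ref{realIE} places $I(\Min{X})$ inside $E$, forcing $I(\Min{X})=\Min{Y}$ for some Banach space $Y\subseteq E$, which one identifies with $I(X)$ by verifying $Y$ is an injective rigid Banach extension of $X$ (injectivity transfers from operator space injectivity of $\Min{Y}$ because the embedding $\Min{V_1}\hookrightarrow \Min{V_2}$ is completely isometric for any Banach subspace $V_1\subseteq V_2$, and rigidity transfers because any Banach contraction into a minimal operator space is automatically completely contractive).
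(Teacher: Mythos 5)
Your proof is correct and shares the paper's overall skeleton: exhibit $\Min{I(X)}$ as an injective, rigid extension of $\Min{X}$ and invoke uniqueness of the operator space injective envelope (Lemma \ref{IEc}). The rigidity step is identical to (and in fact stated more cleanly than) the paper's: a complete contraction fixing $\Min{X}$ is in particular a Banach contraction fixing $X$, hence the identity by rigidity of the Banach space envelope. Where you diverge is the injectivity of $\Min{I(X)}$. The paper gets this in one line: $I(X)$ is a $1$-injective Banach space, and any contraction into a minimal operator space is automatically completely contractive, so Banach space injectivity of $I(X)$ upgrades for free to operator space injectivity of $\Min{I(X)}$. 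You instead route through the real Goodner--Nachbin--Kelley theorem ($I(X)\cong C(K,\mathbb{R})$ with $K$ Stonean), the identity $(\Min{E})_c=\Min{E_c}$, the complex fact that $C(K)$ is operator space injective for Stonean $K$, and Lemma \ref{lemmainj}. This works (the $w_2$-complexification of $C(K,\mathbb{R})$ is indeed $C(K)$ with its sup norm, which you should perhaps note explicitly), but it imports a classical structure theorem and the complexification machinery where the paper's ``contractions into $\Min{}$ are complete contractions'' observation suffices; the payoff of your version is that it makes the link to the complex theory explicit, while the paper's is shorter and self-contained. Your closing alternative sketch (commutativity of $I(C(\Ball{X^*},\mathbb{R}))$ via a Hamana-style argument) is not needed and is the least rigorous part of the write-up, but since it is offered only as an option it does not affect the validity of the main argument.
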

\begin{proof}
  Let $X$ be a real Banach space. Since $I(X)$ is an injective Banach space, and contractive maps into $\Min{X}$ are completely contractive, it clear that  $\Min{I(X)}$ is a real injective operator space. 
Let $i:X\longrightarrow I(X)$ be the canonical isometry,  and let $j:I(X)\longrightarrow C(\Omega,\mathbb{R})$ be an isometric embedding of $I(X)$, for some compact, Hausdorff space $\Omega$. Then $j:\Min{I(X)}\longrightarrow C(\Omega,\mathbb{R})$ and  $j\circ i:\Min{X}\longrightarrow C(\Omega,\mathbb{R})$ are complete isometries. Thus $(\Min {I(X)}, i)$ is a real injective extension of $\Min{X}$. Further suppose that $u:\Min{I(X)}\longrightarrow \Min{I(X)}$ is a complete contraction which restricts to the identity map on $\Min{X}$. Then  by the rigidity of $I(X)$, $u$ is an isometry into $\Min{I(X)}$, and hence a complete isometry. Thus $(\Min{I(X)}, i)$ is a rigid extension of $\Min{X}$, and hence $I(\Min{X})=\Min{(I(X))}$, completely isometrically.
\end{proof}

\begin{definition}
  Let $X$ be a real unital operator space. Then we define a
  {\em $C^*$-extension} of $X$ to be a pair $(B,j)$ consisting of a unital
  real $C^*$-algebra $B$, and a complete isometry $j:X\longrightarrow
  B$, such that $j(X)$ generates $B$ as a $C^*$-algebra.
  A $C^*$-extension $(B,i)$ is a {\em $C^*$-envelope} of
  $X$ if it has the the following
  universal property: Given any $C^*$-extension $(A,j)$ of $X$, there
  exists a (necessarily unique and surjective) real $*$-homomorphism
  $\pi:A\longrightarrow B$, such that $\pi\circ j=i$.
\end{definition}

Using Theorem \ref{ChoiEff}, Theorem \ref{IE}, and the argument in \cite[4.3.3]{BLM}, we can show that
 the $C^*$-subalgebra of $I(X)$ generated by 
$i(X)$ is a $C^*$-envelope of $X$, where the pair $(I(X), i)$ is an injective envelope of $X$.
Thus the $C^*$-envelope exists for every unital real operator space $X$.

\medskip

A {\em real operator system} is a (closed) subspace $\mathcal{S}$ of B(H), $H$ a real Hilbert space, such that $\mathcal{S}$ contains $I_H$, and $\mathcal{S}$ is selfadjoint, i.e., $x^* \in \mathcal{S}$ if and only if $x \in \mathcal{S}$. 
Note that a positive element in $B(H)$, $H$ a real Hilbert space, need not be selfadjoint. 
For instance, consider the $2\times 2$ matrices over $\mathbb{R}$, then $x=\left[ \begin{array}{cc} 2 & -1 \\ 1 & 2 \end{array} \right] $ is positive, i.e., $\ip{x\zeta}{\zeta}\geq 0$ for all $\zeta\in \mathbb{R}^2$, but $x\neq x^*$. Thus, we say that an element $x\in \mathcal{S}(X)\subset B(H)$ is {\em positive}, if for all $\zeta, \eta\in H$,
$\ip{x\zeta}{\eta}=\ip{\zeta}{x\eta}$ (selfadjoint), and $\ip{x\zeta}{\zeta}\geq 0$. 
 If $x\in B(H,K)$, $H$ and $K$ real Hilbert spaces, then
\begin{equation}\label{eqn1}
\left[ \begin{array}{cc} 1 & x \\ x^* & 1 \end{array} \right]\geq 0 \;\; \iff  \norm{x}\leq 1.
\end{equation}
In \cite{RuaR}, Ruan considers real operator systems and shows that a unital selfadjoint map between two real operator systems is completely contractive if and only if it is completely positive.  It is also shown that the Stinespring theorem, the Arveson's Extension Theorem, and the Kadison-Schwarz inequality hold true, with an added hypothesis that the maps be selfadjoint. 
We can show using the  Stinespring theorem that  Proposition 1.3.11 and Proposition 1.3.12 from \cite{BLM}, are also true in the real setting.

If $X\subset B(H)$ is a real operator space, then we can define the {\em real Paulsen system} as
$$\mathcal{S}(X)=\left [ \begin{array}{cc} 
                         \mathbb{R} I_H & X \\
                        X^{\star} & \mathbb{R} I_H 
                         \end{array}
                  \right]
=           \left \{ \left [ \begin{array}{cc}
                          \lambda & x \\
                            y^* & \mu
                         \end{array}
                  \right]\ :\ x,y \in X\ {\rm and}\ \lambda, \mu \in \mathbb{R} \right \} \subset M_2(B(H)).
$$
The next lemma is the real version of Paulsen lemma, and it can be proved using the argument in \cite[Lemma 1.3.15]{BLM}, Equation (\ref{eqn1}), and that the map $\phi$, defined below, is selfadjoint. This lemma shows that as a real operator system (i.e., up to complete order isomorphism) $\mathcal{S}(X)$ only depends on the operator space structure of $X$, and not on its representation on $H$.
\begin{lemma}\label{Pau}
 For $i=1,2$, let $H_i$ and $K_i$ be real Hilbert spaces, and $X_i\subset B(K_i,H_i)$. Suppose that $u:X_1\to X_2$
is a real linear map. Let $\mathcal{S}_i$ be the real Paulsen systems associated with $X_i$ inside $B(H_i\oplus K_i)$. If $u$ is contractive (resp.\ completely contractive, completely isometric), then
$$\phi: \left [ \begin{array}{cc}
                          \lambda & x \\
                            y^* & \mu
                         \end{array}
                  \right]\
\to 
\left [ \begin{array}{cc}
                          \lambda & u(x) \\
                            u(y)^* & \mu
                         \end{array}
                  \right]  $$
is positive (resp.\ completely positive and completely contractive, a complete order injection) as a map from $\mathcal{S}_1$ to $\mathcal{S}_2$.
\end{lemma}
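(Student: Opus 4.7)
My plan is to adapt the proof of the complex Paulsen lemma (BLM Lemma~1.3.15) to the real setting, using Equation~(\ref{eqn1}) at each stage and exploiting the self-adjointness of $\phi$. First I would record two elementary but essential observations: $\phi$ is unital, sending $I_{H_1\oplus K_1}$ to $I_{H_2\oplus K_2}$, and $\phi$ is self-adjoint, since a direct computation gives
$$\phi\left(\left[\begin{array}{cc}\lambda & x\\ y^* & \mu\end{array}\right]^*\right)=\phi\left[\begin{array}{cc}\lambda & y\\ x^* & \mu\end{array}\right]=\left[\begin{array}{cc}\lambda & u(y)\\ u(x)^* & \mu\end{array}\right]=\phi\left[\begin{array}{cc}\lambda & x\\ y^* & \mu\end{array}\right]^*.$$

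For the positivity claim, given $u$ contractive, a self-adjoint positive element of $\mathcal{S}_1$ must take the form $T=\left[\begin{array}{cc}\lambda & x\\ x^* & \mu\end{array}\right]$ with $\lambda,\mu\geq 0$. When $\lambda,\mu>0$, scaling and Equation~(\ref{eqn1}) applied to $(\lambda\mu)^{-1/2}x$ give $T\geq 0$ iff $\|x\|\leq\sqrt{\lambda\mu}$; then $\|u(x)\|\leq\|x\|\leq\sqrt{\lambda\mu}$ yields $\phi(T)\geq 0$ by the same equation. The degenerate cases $\lambda=0$ or $\mu=0$ follow by replacing $T$ with $T+\varepsilon I$ and letting $\varepsilon\to 0^+$.

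For the completely contractive case, I would use the canonical ``shuffle'' identification that views $M_n(\mathcal{S}_1)$ as a subspace of $M_2(B(H_1^n\oplus K_1^n))$, under which a self-adjoint element takes the form $\tilde T=\left[\begin{array}{cc} A & X\\ X^* & B\end{array}\right]$ with $A,B\in M_n(\mathbb{R})$ symmetric and $X\in M_n(X_1)$, and $\phi_n$ becomes $\tilde T\mapsto\left[\begin{array}{cc} A & u_n(X)\\ u_n(X)^* & B\end{array}\right]$. When $A,B>0$, positivity of $\tilde T$ is equivalent to $\|A^{-1/2}XB^{-1/2}\|\leq 1$; since $A^{-1/2},B^{-1/2}\in M_n(\mathbb{R})$ and $u_n$ is $\mathbb{R}$-linear, $A^{-1/2}u_n(X)B^{-1/2}=u_n(A^{-1/2}XB^{-1/2})$, and complete contractivity of $u$ gives $\|A^{-1/2}u_n(X)B^{-1/2}\|\leq 1$, so $\phi_n(\tilde T)\geq 0$; a perturbation handles the general case. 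Thus $\phi$ is completely positive. Since $\phi$ is unital, self-adjoint, and completely positive, for any $T\in M_n(\mathcal{S}_1)$ with $\|T\|\leq 1$ Equation~(\ref{eqn1}) gives $\left[\begin{array}{cc} I & T\\ T^* & I\end{array}\right]\geq 0$; applying $\phi_{2n}$ preserves this and then Equation~(\ref{eqn1}) again forces $\|\phi_n(T)\|\leq 1$. Hence $\phi$ is completely contractive.

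For the complete order injection case, I would apply the previous step both to $u$ and to its inverse $u^{-1}:u(X_1)\to X_1$ (also completely isometric, hence completely contractive), obtaining that $\phi$ and its inverse on $\phi(\mathcal{S}_1)$ are both completely positive. The main technical obstacle will be the careful bookkeeping behind the shuffle identification and the verification that $\phi_n$ really corresponds to the Paulsen-type map for $u_n$; the key intertwining $u_n(AXB)=Au_n(X)B$ for real scalar matrices $A,B$ is what lets Equation~(\ref{eqn1}) be applied at each matrix level and what bridges complete positivity to complete contractivity in the real setting, where one lacks the automatic ``unital $+$ completely positive $\Rightarrow$ completely contractive'' of the complex theory without separately invoking self-adjointness.
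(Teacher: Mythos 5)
Your proposal is correct and follows essentially the same route as the paper, which proves this lemma by adapting the complex argument of \cite[Lemma 1.3.15]{BLM} using Equation (\ref{eqn1}) and the selfadjointness of $\phi$ — precisely the three ingredients you identify and work out in detail (the shuffle to reduce matrix-level positivity to the $2\times 2$ criterion, the intertwining $u_n(AXB)=Au_n(X)B$ for real scalar matrices, and the unital-selfadjoint-completely-positive route to complete contractivity).
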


Let $X\subset B(H)$ be a real operator space and let $\mathcal{S}(X)\subset M_2(B(H))$ be the associated real Paulsen system. Then  $ I(\mathcal{S}(X))\subset M_2(B(H))$ is a unital $C^*$-algebra, by Theorem \ref{IE}, and there is a completely positive idempotent map $\phi$ from $M_2(B(H))$ onto $I(\mathcal{S}(X))$.
Let $p$ and $q$ be the canonical projections $I_H\oplus 0$ and $0\oplus I_H$, then $\phi(p)=p$ and $\phi(q)=q$. So, $$I(\mathcal{S}(X))=\left [ \begin{array}{cc}
                          pI(\mathcal{S}(X))p & pI(\mathcal{S}(X))q\\
                           qI(\mathcal{S}(X))p & qI(\mathcal{S}(X))q
                         \end{array}
                  \right]\ .$$ 
Using Lemma \ref{IEc} and Lemma \ref{Pau}, and the argument in \cite[Theorem 4.4.3]{BLM}, we can show that the 1-2-corner,
$pI(\mathcal{S}(X))q$, of $I(\mathcal{S}(X))$ is an injective envelope of $X$. As a corollary, we get the following which is the real analogue of the Hamana-Ruan characterization of injective operator spaces.

\begin{theorem}
 A real operator space $X$ is injective if and only if $X\cong pA(1-p)$ completely isometrically, for a projection $p$ in an injective real $C^*$-algebra $A$.
\end{theorem}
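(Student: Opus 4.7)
The plan is to read off both directions from the construction developed in the preceding paragraph, which established that the $1$-$2$ corner $pI(\mathcal{S}(X))(1-p)$ of the injective real $C^*$-algebra $I(\mathcal{S}(X))$ is an injective envelope of $X$, where $p = I_H \oplus 0$ inside $M_2(B(H))$ (so that $1-p = q = 0 \oplus I_H$ inside the unital $C^*$-algebra $I(\mathcal{S}(X))$).

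For the forward direction, assume $X$ is injective and embed $X \subset B(H)$ for some real Hilbert space $H$. Since $X$ is injective, the pair $(X,\mathrm{id}_X)$ is itself an injective extension of $X$ with no strictly smaller injective intermediate space, so $X$ is its own injective envelope. By uniqueness of the injective envelope (noted just after Lemma \ref{IEc}), $X \cong pI(\mathcal{S}(X))(1-p)$ completely isometrically. Setting $A := I(\mathcal{S}(X))$, which is a unital real $C^*$-algebra by Theorem \ref{IE} and is injective as an injective envelope, yields the desired representation.

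For the converse, suppose $X \cong pA(1-p)$ completely isometrically for a projection $p$ in an injective real $C^*$-algebra $A$. Embed $A \subset B(H)$ and let $\Phi: B(H) \to A$ be a completely contractive (real linear) idempotent onto $A$, which exists by injectivity of $A$. The corner map $\psi: A \to pA(1-p)$ defined by $\psi(a) = pa(1-p)$ is completely contractive (as left and right multiplication by the contractions $p$ and $1-p$), satisfies $\psi^2 = \psi$, and has range $pA(1-p)$. The composition $\psi \circ \Phi: B(H) \to pA(1-p)$ is then itself a completely contractive idempotent, since its image lies in $A$, where $\Phi$ is the identity. Hence $pA(1-p) \cong X$ is the range of a completely contractive idempotent on $B(H)$, and is therefore injective. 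The substantive work was already carried out in the preceding construction of the injective envelope via the Paulsen system; this theorem is a short bookkeeping corollary, with no real obstacle beyond invoking uniqueness of the injective envelope in one direction and the standard corner-of-injective fact in the other.
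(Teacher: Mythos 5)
Your proof is correct and follows essentially the same route the paper intends: the forward direction reads off the representation $X\cong pI(\mathcal{S}(X))q$ from the preceding Paulsen-system construction together with uniqueness of the injective envelope (since an injective $X$ is its own injective envelope), and the converse uses the standard fact that the range of the completely contractive idempotent $T\mapsto p\Phi(T)(1-p)$ on $B(H)$ is $pA(1-p)$, which is therefore injective. The paper states the theorem as a corollary of exactly this construction, so your write-up supplies precisely the bookkeeping it leaves implicit.
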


 A {\rm real TRO} is a closed linear subspace $Z$ of $B(K,H)$, for some real Hilbert spaces $K$ and $H$,
satisfying $ZZ^{\star}Z\subset Z$. For $x,y,z \in Z$, $xy^*z$ is called the {\em triple or ternary product} on $Z$, sometimes written as
$[x, y, z]$. A {\em subtriple} of a TRO $Z$ is a closed subspace $Y$ of $Z$ satisfying $YY^{\star}Y\subset Y$. A {\em triple morphism} between TROs is a linear map which respects the triple product: thus $T([x, y, z])=[Tx, Ty, Tz]$. In the construction
of the real injective envelope, discussed above, let $Z=pI(\mathcal{S}(X))q$, then $ZZ^{\star}Z\subset Z$ with the product of the $C^*$-algebra $I(\mathcal{S}(X))$. In terms of the product in $B(H)$, $[x,y,z]=P(xy^*z)$ for $x,y,z\in Z$. So if $X$ is a TRO, then the triple product on $X$ coincides with the triple product on $X$ coming from $I(X)$. Thus $pI(\mathcal{S}(X))q=I(X)$ is a TRO. If two TROs $X$ and $Y$ are completely isometrically isomorphic, via say $u$, then by Lemma \ref{Pau}, we can extend $u$ to a complete order isomorphism between the Paulsen systems. Further, this map extends to a completely isometric unital surjection $\tilde{u}$ between
 the the injective envelopes $I(\mathcal{S}(X))$ and $I(\mathcal{S}(Y))$, which are (real) unital $C^*$-algebras. By Lemma
 \ref{lemma1},  $\tilde{u}$ is a $*$-isomorphism, and hence a ternary isomorphism between when restricted to $X$. Thus $u$ is a triple isomorphism. Thus a real operator space can have at most one triple product (up to complete isometry).

Define $T(X)$ to be the smallest subtriple of $I(X)$ containing $X$. Then it is easy to see that
$$T(X)=\overline{\rm Span}\{ x_1x_2^*x_3x_4^*\hdots x_{2n+1}\ :\ x_1,x_2,\hdots x_{2n+1}\in X \}.$$
Let $B=T(X)^{\star}T(X)$, $T(X)$ regarded as a subtriple of $I(X)$ in $I(\mathcal{S}(X))$. Then $B$ is a $C^*$-subalgebra
of $2$-$2$-corner of $I(\mathcal{S}(X))$, and hence of $I(\mathcal{S}(X))$. Define $\ip{y}{z}=y^*z$ for $y,z\in  T(X)$, a 
$B$-valued inner product. This inner product is called the {\em Shilov inner product} on $X$.

\section{One-Sided Real $M$-Ideals}

Let $X$ be a real operator space. If $P$ is a projection, i.e., $P=P^2$ and $P^*=P$ (equivalently $\norm{P}\leq 1$), then define linear mappings
$$\nu_P^c:X\longrightarrow C_2(X):x\mapsto \left [\begin{array}{c} P(x)\\ x-P(x)\end{array}\right ],$$
$$\mu_P^c:C_2(X)\longrightarrow X:\left [\begin{array}{c} x\\ y\end{array}\right ] \mapsto P(x)+({\rm Id}-P)(y).$$
Then $\mu_P^c\circ \nu_P^c=I$.
\begin{definition}
A {\em complete left $M$-projection} on $X$ is a linear idempotent on $X$ such that the map $\nu_P^c:X\longrightarrow  C_2(X):x\mapsto \left [\begin{array}{c} P(x)\\ x-P(x)\end{array}\right ]$ is a complete isometry.
\end{definition}

\begin{proposition}\label{charac1}
If $X$ is a real operator space and $P:X\longrightarrow X$ is a projection, then $P$ is a complete left $M$-projection if and only if $\mu_P^c$ and $\nu_P^c$ are both completely contractive. 
\end{proposition}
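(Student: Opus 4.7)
The reverse implication is immediate from the identity $\mu_P^c \circ \nu_P^c = \mathrm{id}_X$, which is a direct consequence of $P^2 = P$ and $P(I-P) = 0$. Granting this, if both $\mu_P^c$ and $\nu_P^c$ are completely contractive, then for any $[x_{ij}] \in M_n(X)$ one has
\[
\|[x_{ij}]\|_{M_n(X)} = \|(\mu_P^c)_n((\nu_P^c)_n([x_{ij}]))\|_{M_n(X)} \le \|(\nu_P^c)_n([x_{ij}])\|_{M_n(C_2(X))} \le \|[x_{ij}]\|_{M_n(X)},
\]
which forces equality throughout, so $\nu_P^c$ is a complete isometry.

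For the forward direction, suppose $\nu_P^c$ is a complete isometry. Then $\nu_P^c$ is automatically completely contractive, and the substantive point is to show that $\mu_P^c$ is completely contractive. My plan is to reduce this to the (well-known) complex analogue of the characterization by passing to the complexification. Under the canonical complete isometric identification $(C_2(X))_c \cong C_2(X_c)$ coming from the matrix-space complexification discussed in Section~2, one checks directly from the defining formulas that the complexifications of $\nu_P^c$ and $\mu_P^c$ are precisely $\nu_{P_c}^c$ and $\mu_{P_c}^c$, where $P_c$ is the complex-linear idempotent on $X_c$ induced by $P$. By \cite[Theorem~3.1]{RuaC}, a real linear map between real operator spaces is completely contractive (respectively, a complete isometry) if and only if its complexification is. Thus $\nu_{P_c}^c$ is a complex complete isometry, and applying the complex version of this characterization from $M$-ideal theory (cf.\ \cite{BEZ}) gives that $\mu_{P_c}^c$ is completely contractive. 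Restricting back to $X \subset X_c$ shows that $\mu_P^c$ is completely contractive.

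The main preparatory step, and the one requiring actual care, is verifying the complexification dictionary: that $(C_2(X))_c = C_2(X_c)$ as complex operator spaces and that $(\nu_P^c)_c$, $(\mu_P^c)_c$ coincide with $\nu_{P_c}^c$, $\mu_{P_c}^c$ under this identification. Both are formal consequences of the definition of complexification for column matrix spaces, but should be recorded explicitly. A purely real argument is conceivable: from $\nu_P^c$ being a complete isometry, zeroing out one coordinate of the column immediately yields that both $P$ and $I-P$ are completely contractive, and one would then try to bound $\|\mu_P^c\|_{cb}$ by analysing the idempotent $\nu_P^c \circ \mu_P^c$ on $C_2(X)$. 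However, obtaining the sharp estimate $\|\mu_P^c\|_{cb} \le 1$ along these lines appears to require essentially the Schwarz-type inequality that the complex theorem already packages, so the complexification route is the more efficient path.
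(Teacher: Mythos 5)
Your proof is correct, and your treatment of the reverse implication (via $\mu_P^c\circ\nu_P^c=\mathrm{id}$) is exactly the paper's. For the forward direction, however, you take a genuinely different route: you complexify, invoke the identification $(C_2(X))_c\cong C_2(X_c)$ and the fact that $(\nu_P^c)_c=\nu_{P_c}^c$, $(\mu_P^c)_c=\mu_{P_c}^c$, apply the complex theorem from \cite{BEZ}, and restrict back. All the ingredients check out (Ruan's \cite[Theorem 3.1]{RuaC} gives the ``iff'' for complete contractivity under complexification, and the shuffle identification $C_2(X)_c\cong C_2(X_c)$ is the same one the paper itself uses later when proving that $P$ is a complete left $M$-projection iff $P_c$ is), so your argument essentially front-loads that later material. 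The paper instead gives a short, purely real computation: writing $z=\mu_P^c\bigl[\begin{smallmatrix}x\\ y\end{smallmatrix}\bigr]=P(x)+(I-P)(y)$, one has $\nu_P^c(z)=\bigl[\begin{smallmatrix}P(x)\\ y-P(y)\end{smallmatrix}\bigr]$, whose norm is dominated by that of the four-entry column $\bigl[\begin{smallmatrix}P(x)\\ x-P(x)\\ P(y)\\ y-P(y)\end{smallmatrix}\bigr]$ (deleting rows of a column cannot increase the norm), and that column is the amplification of the complete isometry $\nu_P^c$ applied to $\bigl[\begin{smallmatrix}x\\ y\end{smallmatrix}\bigr]$, hence has norm $\bigl\|\bigl[\begin{smallmatrix}x\\ y\end{smallmatrix}\bigr]\bigr\|$; the same computation runs at every matrix level. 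So your closing speculation is off the mark: the sharp bound $\|\mu_P^c\|_{cb}\le 1$ needs no Schwarz-type inequality, only the idempotent identities and the subcolumn-norm inequality, and this is in fact how the complex result in \cite{BEZ} is proved as well. Your route is valid but outsources to the complex literature a fact whose proof transfers verbatim to the real setting; the direct argument is both shorter and keeps this proposition independent of the complexification machinery.
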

\begin{proof}
 If $\nu_P^c$ is completely isometric, then 
$$\norm{P(x)+y-P(y)}
=\mnorm{\begin{array}{c}
P(x)\\ y-P(x)
\end{array}
}
\leq \mnorm{\begin{array}{c}
P(x)\\
x-P(x)\\
P(y)\\
y-P(y)
\end{array}
}
=\mnorm{\begin{array}{c}
x\\
y
\end{array}
},
$$ 
and thus $\mu_P^c$ is contractive. These calculations work as well for matrices. The converse follows from the fact that $\mu_P^c\circ \nu_P^c=I$.
\end{proof}

\begin{proposition}\label{charac2}
 The complete left $M$-projections in a real operator space $X$ are just the mappings $P(x)=ex$ for a 
completely isometric embedding $X\hookrightarrow B(H)$ and an orthogonal projection $e\in B(H)$.
\end{proposition}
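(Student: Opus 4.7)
The plan is to prove the equivalence with the ``if'' direction handled by a direct $C^*$-identity computation, and the ``only if'' direction reduced to the complex Blecher--Effros--Zarikian characterization via complexification.

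For the \emph{if} direction, suppose $X \hookrightarrow B(H)$ completely isometrically and $P(x) = ex$ for some orthogonal projection $e \in B(H)$. For $[x_{ij}] \in M_n(X)$, the $C^*$-identity gives
\[
\|[\nu_P^c(x_{ij})]\|^2 = \left\| [x_{ij}]^*\bigl(e^*e + (I-e)^*(I-e)\bigr)[x_{ij}] \right\| = \|[x_{ij}]\|^2,
\]
showing $\nu_P^c$ is a complete isometry, so $P$ is a complete left $M$-projection.

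For the \emph{only if} direction, I would assume $P$ is a complete left $M$-projection and pass to the complexification $P_c : X_c \to X_c$. The first step is to show that $P_c$ is a (complex) complete left $M$-projection on $X_c$. Since $C_2(X)_c \cong C_2(X_c)$ completely isometrically and $\nu_{P_c}^c$ corresponds to $(\nu_P^c)_c$ under this identification, and since the complexification of a real complete isometry is a complex complete isometry by \cite[Theorem 3.1]{RuaC}, this is immediate. I would then invoke the complex version of the proposition, i.e., \cite{BEZ}, to obtain a complex Hilbert space $K$, a complex completely isometric embedding $\pi : X_c \to B(K)$, and an orthogonal projection $f \in B(K)$ such that $P_c(z) = f\pi(z)$ for all $z \in X_c$.

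Finally, I would view $K$ as a real Hilbert space $K_r$. The inclusion $B(K) \hookrightarrow B(K_r)$ is real completely isometric: on the $n$th matrix level it corresponds to $B(K^n) \hookrightarrow B(K_r^n)$, and any complex linear operator on $K^n$ has the same norm as its image as a real linear operator on $K_r^n$. The projection $f$, being self-adjoint and idempotent on the complex inner product space $K$, remains an orthogonal projection on $K_r$. Restricting $\pi$ to $X$ then yields a real completely isometric embedding of $X$ into $B(K_r)$ satisfying $P(x) = f\pi(x)$, as desired. The most delicate bookkeeping is in verifying the identification $\nu_{P_c}^c = (\nu_P^c)_c$ under $C_2(X)_c \cong C_2(X_c)$, which is precisely what makes the complex BEZ result directly applicable after complexification.
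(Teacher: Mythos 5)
Your ``if'' direction is exactly the paper's: the $C^*$-identity computation $\norm{x^*ex+x^*(1-e)x}=\norm{x^*x}$ at each matrix level. For the ``only if'' direction you take a genuinely different route. The paper argues directly in the real category: fixing any completely isometric embedding $X\subset B(H)$, the map $\sigma(x)=\left[\begin{smallmatrix} P(x) & 0\\ (I-P)(x) & 0\end{smallmatrix}\right]\in B(H\oplus H)$ is completely isometric (it is just $\nu_P^c$ followed by the inclusion of $C_2(B(H))$ into $B(H\oplus H)$ as the first column), and then $e=\left[\begin{smallmatrix}1&0\\0&0\end{smallmatrix}\right]$ implements $P$ since $\sigma(P(x))=e\sigma(x)$ by idempotency of $P$. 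Your route instead complexifies $P$, verifies via the shuffle $C_2(X)_c\cong C_2(X_c)$ that $P_c$ is a complex complete left $M$-projection, invokes the complex BEZ characterization, and then decomplexifies the resulting Hilbert space $K$ to $K_r$. This is correct: the identification $\nu_{P_c}^c=(\nu_P^c)_c$ is precisely the computation the paper carries out later (phrased with $\tau_P$) when proving that $P$ is a complete left $M$-projection iff $P_c$ is, the inclusion $B(K)\hookrightarrow B(K_r)$ is a real complete isometry, and the real adjoint with respect to ${\rm Re}\ip{\cdot}{\cdot}$ of a complex linear operator agrees with its complex adjoint, so $f$ stays an orthogonal projection. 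What the paper's argument buys is economy and self-containedness -- two lines, no appeal to the complex theory, and it makes clear that this particular characterization needs nothing beyond the real Ruan axioms; what your argument buys is a template for transferring one-sided $M$-structure results from the complex to the real setting, which is exactly the strategy the paper adopts for the harder results later in Section 5, but it is heavier than necessary here.
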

\begin{proof}
 If $P:X\longrightarrow X$ is a complete left $M$-projection, then fix an embedding $X\subset B(H)$ for some
real Hilbert space $H$. By the definition, the mapping 
$$\sigma:X\hookrightarrow B(H\oplus H): x\mapsto \left [ \begin{array}{cc}   P(x) & 0 \\
                                                                            (I-P)(x) & 0
                                                        \end{array} \right ]
$$
is completely isometric. We have that
$$ \sigma (P(x))=\left[ \begin{array}{cc} P(x) & 0 \\
                                          0 & 0
\end{array} \right]=
\left [ \begin{array}{cc} 1 & 0 \\
                                          0 & 0
\end{array} \right ]
\sigma(x),
$$
and thus $e=\left[ \begin{array}{cc} 1 & 0 \\
                                          0 & 0
\end{array} \right] \in B(H\oplus H)$ is the desired left projection relative to the embedding $\sigma$. The converse follows from the following:
\begin{eqnarray*}
 \mnorm{\begin{array}{c}
                 P(x) \\ x-P(x)
\end{array}}^2
&=&\mnorm{\begin{array}{c}
                 ex \\ x-ex
\end{array}}^2
=\norm{\left[ \begin{array}{cc}
x^*e & x^*-x^*e
\end{array} \right]
\left[ \begin{array}{c}
 ex \\ x-ex
\end{array}\right]
}\\
&=&\norm{x^*ex +x^*(1-e)x}
=\norm{x^*x}
=\norm{x}^2.
\end{eqnarray*}
\end{proof}

 Let $X$ be a real operator space. We say a map
$u:X\longrightarrow X$ is a {\em left multiplier} of $X$ if there
exists a linear complete isometry $\sigma: X\longrightarrow B(H)$ for 
some real Hilbert space $H$, and
an operator $S\in B(H)$ such that
$$\sigma(u(x))=S\sigma(x),$$
for all $x\in X$. We denote the set of all left multipliers of $X$ by
$\Ml(X)$. Define the multiplier norm of $u$, to be the
infimum of $\norm{S}$ over all such possible $H, S, \sigma$. 
We define a {\em left adjointable map} of $X$ to be
 a linear map $u:X\longrightarrow X$ such that there exists a linear
 complete isometry $\sigma: X\longrightarrow B(H)$ for some real Hilbert space $H$,
 and an operator
 $A\in B(H)$ such that
 $$\sigma(u(x))=A\sigma(x)\ {\rm for\  all\ } x\in X, \ {\rm and}\ A^*\sigma(X)\subset \sigma(X).$$
 The collection of all left adjointable maps of $X$ is denoted by
 $\Al(X)$. Every left adjointable map of $X$ is a left multiplier of
 $X$, that is, $\Al(X)\subset \Ml(X)$.

\begin{theorem}\label{CharMulti}
Let $X$ be a real operator space and let $u: X \longrightarrow X$ be a linear map. Then the 
following are equivalent:
\begin{enumerate}[\upshape (i)]
\item $u$ is a left multiplier of $X$ with norm $\leq 1$.
\item The map 
$\tau_u: C_2(X)\longrightarrow C_2(X): \left [\begin{array}{c} x \\ y \end{array} \right ]\mapsto \left [\begin{array}{c} u(x) \\ y \end{array} \right ]$, is completely contractive.
\item There exists a unique `$a$' in the $1$-$1$-corner of $I(\mathcal{S}(X))$ such that $\norm{a}\leq 1$ and 
$u(x)=ax$ for all $x\in X$. 
\end{enumerate}
\end{theorem}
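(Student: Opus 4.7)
The plan is to establish the cycle (iii) $\Rightarrow$ (i) $\Rightarrow$ (ii) $\Rightarrow$ (iii), imitating the complex proof of this characterization (cf.\ \cite[Theorem 4.5.2]{BLM}) but substituting the real counterparts of each ingredient developed in Sections 2 and 4. The first two implications are essentially computational; the heart of the argument is (ii) $\Rightarrow$ (iii), which I would produce using the injective envelope $I(\mathcal{S}(X))$ of the real Paulsen system.

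For (iii) $\Rightarrow$ (i): Theorem \ref{IE} gives that $I(\mathcal{S}(X))$ is a unital real $C^*$-algebra, so it embeds completely isometrically in $B(K)$ for some real Hilbert space $K$. Under this embedding $X$ sits inside $B(K)$ via the identification $X \cong pI(\mathcal{S}(X))q$ described after Theorem \ref{IE}, and the given $a$ lies in the 1-1-corner $pI(\mathcal{S}(X))p \subset B(K)$; taking $\sigma$ to be this embedding and $S$ the image of $a$ yields $\sigma(u(x)) = ax = S\sigma(x)$, exhibiting $u$ as a left multiplier of norm $\leq \|a\| \leq 1$. For (i) $\Rightarrow$ (ii): the column amplification $\sigma_2 : C_2(X) \to C_2(B(H))$ is completely isometric and satisfies $\sigma_2 \circ \tau_u = \mathrm{diag}(S, I_H) \cdot \sigma_2$, i.e., it intertwines $\tau_u$ with left multiplication by the block-diagonal operator $\mathrm{diag}(S, I_H) \in B(H \oplus H)$. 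This operator has norm $\max(\|S\|, 1) \leq 1$, so left multiplication by it is completely contractive on $C_2(B(H))$, forcing $\tau_u$ to be completely contractive on $C_2(X)$.

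For (ii) $\Rightarrow$ (iii), the main substantive step, my plan is as follows. Realize $\mathcal{S}(X) \subset M_2(B(H))$, and let $\Phi : M_2(B(H)) \to I(\mathcal{S}(X))$ be a completely contractive (in fact completely positive) projection onto the injective envelope. By the real Arveson--Wittstock extension theorem (\cite[Theorem 3.1]{RuaR}), extend $\tau_u$ to a complete contraction $\tilde\tau : C_2(B(H)) \to C_2(B(H))$, and identify $\tilde\tau$ with left multiplication on the column by some $T \in M_2(B(H))$ of norm at most $1$. The constraint $\tilde\tau|_{C_2(X)} = \tau_u$ forces the $(2,2)$-entry of $T$ to act as $I_H$ on $X$ and the off-diagonal entries to annihilate $X$. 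Define $a$ to be $\Phi$ applied to the $(1,1)$-entry of $T$. Using the bimodular property of $\Phi$ over its range (a consequence of the real Choi--Effros construction in Theorem \ref{ChoiEff}, exactly as in the complex setting), one checks that $a$ lies in the 1-1-corner of $I(\mathcal{S}(X))$, has $\|a\| \leq 1$, and satisfies $ax = u(x)$ for every $x \in X$. Uniqueness is immediate from essentiality of the injective envelope (Lemma \ref{IEc}): another $a'$ in the same corner with $a'x = ax$ for all $x \in X$ would have $(a - a')X = 0$, whence $a = a'$.

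The main obstacle I anticipate is pinning down $a$ from $\tilde\tau$ via the bimodularity step, since the passage from left multiplication by $T$ on the larger space to an element of the $C^*$-algebra $I(\mathcal{S}(X))$ needs care in the real category. If the direct route proves awkward, a natural fallback is to complexify: $\tau_{u_c} = (\tau_u)_c$ is completely contractive on $C_2(X_c) = C_2(X)_c$, so the complex version of the theorem gives a unique $a \in I(\mathcal{S}(X_c))$ in the relevant corner with $u_c(z) = az$ for $z \in X_c$. Combining Lemma \ref{lemmainj} with the rigidity of injective envelopes identifies $I(\mathcal{S}(X_c)) \cong I(\mathcal{S}(X))_c$; since $u_c$ commutes with the conjugation, $\bar a$ also implements $u_c$, so uniqueness in the complex case forces $\bar a = a$, placing $a$ in the real 1-1-corner of $I(\mathcal{S}(X))$ and finishing the proof.
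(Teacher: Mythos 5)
Your cycle and your treatment of (iii) $\Rightarrow$ (i) and (i) $\Rightarrow$ (ii) match the paper, which imports those two implications directly from \cite[Theorem 4.5.2]{BLM}. The problem is your primary argument for (ii) $\Rightarrow$ (iii). After extending $\tau_u$ by the real Arveson--Wittstock theorem to a complete contraction $\tilde\tau$ on $C_2(B(H))$, you ``identify $\tilde\tau$ with left multiplication on the column by some $T\in M_2(B(H))$.'' That identification is false for a general complete contraction on $C_2(B(H))$: for instance $[x\;\;y]^t\mapsto [VxW\;\;y]^t$ with $V,W$ unitary and $W\neq I$ is completely isometric but is not left multiplication by any element of $M_2(B(H))$. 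Left multiplications are exactly the completely contractive \emph{right $B(H)$-module} maps on $C_2(B(H))$, and the Arveson--Wittstock extension gives you no module property to exploit; deciding which complete contractions arise from left multiplication is precisely the content of the theorem, so this step begs the question. The paper instead reproduces Paulsen's argument from \cite[Theorem 4.5.2]{BLM} in the real category: from $\tau_u$ one builds a selfadjoint map $\Phi'$ on an operator system, extends it by the real Arveson extension theorem of \cite{RuaR} to a completely positive selfadjoint map $\Phi$ on a $C^*$-algebra $M$ containing $I(\mathcal{S}(X))$, and uses the real Stinespring theorem to obtain the bimodule property of $\Phi$ over the fixed $C^*$-subalgebra $B$ (the real version of \cite[Proposition 1.3.11]{BLM}); the element $a$ then drops out of bimodularity.

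Your complexification fallback is a genuinely different and in principle viable route (closer in spirit to how much of the rest of the paper operates), but as written it also has a hole: you need $I(\mathcal{S}(X_c))\cong I(\mathcal{S}(X))_c$ compatibly with the corner projections, and Lemma \ref{lemmainj} only tells you that $I(\mathcal{S}(X))_c$ is injective and contains $\mathcal{S}(X_c)=\mathcal{S}(X)_c$. You must still show it is a rigid (equivalently, essential) extension of $\mathcal{S}(X_c)$ in the complex category; real rigidity controls $\mathrm{Re}(v)$ of a complex complete contraction $v$ fixing $\mathcal{S}(X_c)$, but killing $\mathrm{Im}(v)$ needs an extra argument (e.g.\ that $v$ is unital and completely contractive on the unital $C^*$-algebra $I(\mathcal{S}(X))_c$, hence completely positive and compatible with the conjugation). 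Once that identification is in place, your final step --- $\bar a$ also implements $u_c$, so $\bar a=a$ by uniqueness and $a$ lies in the real $1$-$1$-corner --- is correct.
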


 By a direct application of the argument in \cite[Theorem 4.5.2]{BLM}, we get that (i) $\Rightarrow $ (ii) and 
(iii) $\Rightarrow$ (i). For the implication (ii) $\Rightarrow$ (iii), using the machinery we developed for real operator spaces in the last section, we can replicate the elegant proof due to Paulsen mentioned in \cite[Theorem 4.5.2]{BLM}. Note that the map $\Phi^{'}$ in 
\cite[Theorem 4.5.2]{BLM}, is selfadjoint, therefore by the real version of the Arveson's extension theorem from 
\cite{RuaR}, $\Phi^{'}$ extends to a completely positive and selfadjoint map $\Phi$, on the $C^*$-algebra $M$. By the real version of the Stinespring's Theorem \cite[Theorem 4.3]{RuaR}, the argument in \cite[Proposition 1.3.11]{BLM} can be reproduced, and hence \cite[Proposition 1.3.11]{BLM} holds for real $C^*$-algebras.
Since $\Phi$
fixes the $C^*$-subalgebra 
$$ B= \left [ \begin{array}{ccc} \mathbb{C} & 0 & 0 \\
                             0 & I_{11} & I(X) \\
                             0  & I(X)^{\star} & I_{22} \end{array} \right ]$$
of $M$, so $\Phi$ is a $*$-homomorphism on $B$. By \cite[Proposition 1.3.11]{BLM}, $\Phi: M\longrightarrow M$
 is a bimodule map over $B$. The rest of the argument follows verbatim.

\begin{theorem}\label{multialg}
Let $X$ be a real operator space then $\Ml (X)$ is 
a real operator algebra. Further, $\Al(X)$ is a real $C^*$-algebra.   
\end{theorem}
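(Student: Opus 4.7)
The plan is to realize $\Ml(X)$ concretely inside the real $C^*$-algebra $\mathcal{B} := pI(\mathcal{S}(X))p$, where $p = I_H \oplus 0$ is the canonical projection in $M_2(B(H))$, and to inherit the operator algebra structure from there. By Theorem \ref{IE} applied to the unital real operator system $\mathcal{S}(X)$, the injective envelope $I(\mathcal{S}(X))$ is a unital real $C^*$-algebra, so its $(1,1)$-corner $\mathcal{B}$ is a unital real $C^*$-algebra. Theorem \ref{CharMulti} then supplies a map $\Theta : \Ml(X) \to \mathcal{B}$ sending $u$ to the unique $a \in \mathcal{B}$ with $u(x) = ax$ for all $x \in X$; the equivalence (i)$\Leftrightarrow$(iii) in that theorem gives $\|\Theta(u)\|_{\mathcal{B}} = \|u\|_{\Ml(X)}$.

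First, $\Theta$ is an algebra homomorphism: if $\Theta(u) = a$ and $\Theta(v) = b$, then for $x \in X$, $(uv)(x) = u(v(x)) = u(bx) = a(bx) = (ab)x$ (using $bx = v(x) \in X$), and uniqueness in Theorem \ref{CharMulti} yields $\Theta(uv) = ab$. The image $\Theta(\Ml(X)) = \{a \in \mathcal{B} : aX \subseteq X\}$ is a norm-closed subalgebra of $\mathcal{B}$ by continuity of multiplication. Pulling back matrix norms from $\mathcal{B}$ along $\Theta$ then realizes $\Ml(X)$ as a closed subalgebra of a real $C^*$-algebra, and hence as a real operator algebra. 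The technical compatibility to verify is that these inherited matrix norms agree with the intrinsic multiplier matrix norms on $M_n(\Ml(X))$; this follows from an $M_n$-level amplification of Theorem \ref{CharMulti}(ii), since the real versions of the Arveson extension and Stinespring theorems from \cite{RuaR} used in the proof of Theorem \ref{CharMulti} go through at the matricial level without change.

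The second half is to identify $\Theta(\Al(X))$ with $\{a \in \mathcal{B} : aX \subseteq X \text{ and } a^*X \subseteq X\}$, a norm-closed $*$-subalgebra of $\mathcal{B}$. The reverse inclusion is immediate: if both $a$ and $a^*$ preserve $X$, then the canonical inclusion $X \hookrightarrow I(\mathcal{S}(X))$ itself serves as the witnessing complete isometry in the definition of $\Al(X)$, with $a^*$ witnessing adjointability. For the forward inclusion, given $u \in \Al(X)$ with witnessing data $(\sigma, A)$, the map $v : x \mapsto \sigma^{-1}(A^*\sigma(x))$ is a left multiplier of $X$ and so corresponds by Theorem \ref{CharMulti} to a unique $b \in \mathcal{B}$; rigidity of the injective envelope (Lemma \ref{IEc}), applied via a standard diagram chase to the two completely isometric embeddings of $X$, forces $b = a^*$, and hence $a^*X \subseteq X$. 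Being a norm-closed $*$-subalgebra of the real $C^*$-algebra $\mathcal{B}$, $\Theta(\Al(X))$ is a real $C^*$-algebra, and so is $\Al(X)$. The main obstacle is the forward inclusion for $\Al(X)$ together with the matrix-norm compatibility for $\Ml(X)$; both require careful matricial applications of the injective-envelope machinery but rely on no new ingredients beyond those already developed in Section \ref{rie} and Theorem \ref{CharMulti}.
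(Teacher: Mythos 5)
Your proposal follows essentially the same route as the paper: both identify $\Ml(X)$ isometrically (then completely isometrically, via the $M_n(\Ml(X))\cong \Ml(C_n(X))$ amplification of Theorem \ref{CharMulti}) with the subalgebra $\{a\in pI(\mathcal{S}(X))p : aX\subset X\}$ of the real $C^*$-algebra $pI(\mathcal{S}(X))p$, and then obtain $\Al(X)$ as the $*$-stable part $\{a : aX\subset X,\ a^*X\subset X\}$. Your treatment of the forward inclusion for $\Al(X)$ via rigidity is a correct filling-in of a step the paper leaves implicit in the identity $\Al(X)=\Ml(X)\cap \Ml(X)^{\star}$, but it is not a different method.
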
 
\begin{proof}
We use the completely isometric embeddings $X\subset I(X)\subset \mathcal{S}(X)$, and the notation from Section \ref{rie}. Let $$IM_l(X)=\{a\in pI(\mathcal{S}(X))p\ :\ aX\subset X\}.$$
Then  $IM_l(X)$ is a subalgebra of the real $C^*$-algebra $pI(\mathcal{S}(X))p$, and hence is a real operator algebra. Define $\theta:IM_l(X)\longrightarrow \Ml(X)$ as $\theta(a)(x)=ax$ for any $x\in X$. Then $\theta$ is an isometric
isomorphism. Using the canonical identification $M_n(\Ml(X))\cong \Ml(C_n(X))$, define a matrix norm on $M_n(\Ml(X))$ for 
each $n$.   With these matrix norms, and a matricial generalization of the argument after 
Theorem \ref{CharMulti}  (see e.g. \cite[4.5.4]{BLM}),  $\theta$ is a complete isometric isomorphism. Hence all the `multiplier matrix norms' are norms, and
$\Ml(X)\cong IM_l(X)$ is a real operator algebra. Since $\Al(X)=\Ml(X)\cap \Ml(X)^{\star}$, we have that 
$$\Al(X)\cong \{a\in pI(\mathcal{S}(X))p\ :\ aX\subset X {\rm and}\  a^*X\subset X\}.$$ Hence $\Al(X)$ is a real $C^*$-algebra.   
\end{proof}

\begin{theorem}\label{char}
 If $P$ is a projection on a real operator space $X$, then the following are equivalent:
\begin{enumerate}[\upshape (i)]
 \item $P$ is a complete left $M$-projection.
 \item $\tau_P^c$ is completely contractive.
 \item $P$ is an orthogonal projection in the real $C^*$-algebra $\Al(X)$.
\item $P\in \Ml(X)$ with the multiplier norm $\leq 1$.
\item The maps $\nu_P^c$ and $\mu_P^c$ are completely contractive.
\end{enumerate}
\end{theorem}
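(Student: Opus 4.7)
The plan is to close the cycle of implications by reducing everything to the algebraic description of multipliers provided by Theorem \ref{CharMulti} and Proposition \ref{charac2}. Two of the biconditionals are already on the table: (i) $\iff$ (v) is exactly Proposition \ref{charac1}, and (ii) $\iff$ (iv) is just Theorem \ref{CharMulti} specialized to $u = P$. So the task reduces to slotting (iii) into the chain, for which I will argue (i) $\Rightarrow$ (iv), (iv) $\Rightarrow$ (iii), and (iii) $\Rightarrow$ (i).

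For (i) $\Rightarrow$ (iv), I will appeal directly to Proposition \ref{charac2}: a complete left $M$-projection has the form $P(x) = ex$ where $e$ is an orthogonal projection in $B(H)$ relative to some completely isometric embedding $X \hookrightarrow B(H)$. This is exactly the data exhibiting $P$ as a left multiplier of norm at most $\|e\| \le 1$.

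The heart of the argument is (iv) $\Rightarrow$ (iii). By Theorem \ref{CharMulti} there is a unique $a$ in the $1$-$1$-corner of $I(\mathcal{S}(X))$ with $\|a\|\le 1$ and $P(x) = ax$ on $X$. Since $P^2 = P$, the element $a^2$ also implements $P$ as a left multiplier, and the uniqueness clause forces $a^2 = a$. Now $a$ is a contractive idempotent in a real $C^*$-algebra; such an element is automatically self-adjoint, which I will verify by passing to the complexification $I(\mathcal{S}(X))_c$ (where the standard complex $C^*$-algebra fact applies) and restricting. Once $a = a^*$ is established, $a^* X = aX = P(X) \subset X$, so $P \in \Al(X)$, and under the identification $\Al(X) \cong \{a \in p I(\mathcal{S}(X)) p : aX \subset X,\ a^*X \subset X\}$ from Theorem \ref{multialg}, $P$ corresponds to an orthogonal projection. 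This step is where I expect the main (admittedly minor) obstacle to lie, namely pinning down that the contractive-idempotent-implies-projection fact survives over $\mathbb{R}$; complexification handles it cleanly.

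Finally, for (iii) $\Rightarrow$ (i), I will reverse the last identification: if $P$ is an orthogonal projection in $\Al(X)$, it corresponds to an orthogonal projection $a$ in the $1$-$1$-corner $p I(\mathcal{S}(X)) p$, which sits inside $B(H)$ for an appropriate real Hilbert space $H$ coming from the embedding $I(\mathcal{S}(X)) \subset M_2(B(H))$. Using the corner embedding of $X$ into $p I(\mathcal{S}(X)) (1-p)$ and the resulting complete isometry $\sigma: X \hookrightarrow B(H)$, the action of $P$ becomes left multiplication by the orthogonal projection $a$, and Proposition \ref{charac2} immediately concludes that $P$ is a complete left $M$-projection. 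Combining the implications closes the cycle (i) $\Leftrightarrow$ (v), (i) $\Rightarrow$ (iv) $\Leftrightarrow$ (ii), (iv) $\Rightarrow$ (iii) $\Rightarrow$ (i), yielding the full equivalence.
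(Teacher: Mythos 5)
Your proposal is correct and follows exactly the route the paper intends: the paper's proof consists precisely of citing Proposition \ref{charac1} for (i)$\Leftrightarrow$(v), Theorem \ref{CharMulti} for (ii)$\Leftrightarrow$(iv) and the link to the element $a$ in the $1$-$1$-corner of $I(\mathcal{S}(X))$, and Proposition \ref{charac2} for the passage between $M$-projections and orthogonal projections, which is what you flesh out. Your extra details (uniqueness forcing $a^2=a$, and the contractive-idempotent-implies-self-adjoint fact transferred from the complexification) are the standard filling-in and are sound.
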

The above theorem can be easily seen from Proposition \ref{charac1}, Proposition \ref{charac2}, and Theorem \ref{CharMulti}.

\begin{definition}
 A subspace $J$ of a real operator space $X$ is a {\em right $M$-ideal} if $J^{\perp\perp}$ is the range of a 
complete left $M$-projection on $X^{**}$.
\end{definition}

\begin{proposition}
 A projection $P:X\longrightarrow X$ is a complete left $M$-projection if and only if 
$P_c$ is a (complex) complete left $M$-projection on $X_c$.
\end{proposition}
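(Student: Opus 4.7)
The plan is to reduce everything to the defining property of a complete left $M$-projection, namely that the map $\nu_P^c : X \to C_2(X)$ sending $x$ to $[P(x), x - P(x)]^T$ is a complete isometry, and then invoke \cite[Theorem 3.1]{RuaC}, which says that a real linear map between real operator spaces is a complete isometry if and only if its complexification is a complete isometry. The first thing I would check is that $P_c(x + iy) = P(x) + iP(y)$ is a complex linear idempotent on $X_c$, so it is a projection in the sense used by the paper, and the formation of $P_c$ is functorial (preserving compositions and sums).

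The main preliminary identification to record is that the column space commutes with complexification: under the canonical operator space structure on $X_c$ inherited from the embedding $B(H)_c \cong B(H_c)$, one has $C_2(X)_c = C_2(X_c)$ completely isometrically. This is essentially tautological from the definitions, since $M_{2,1}(X)_c$ is defined via $M_{2,1}(X) + i\,M_{2,1}(X) \subset M_2(M_{2,1}(X))$ and this coincides with $M_{2,1}(X + iX) = M_{2,1}(X_c)$ after the obvious reshuffling of matrix entries. The paper already invokes analogous stability statements (for matrix spaces and second duals) in the remarks following the dual section, so I would just cite that these carry over to $C_2$.

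With this identification in hand, the next step is a direct computation showing that $(\nu_P^c)_c = \nu_{P_c}^c$. Indeed, for $x, y \in X$,
\begin{equation*}
(\nu_P^c)_c(x + iy) = \nu_P^c(x) + i\,\nu_P^c(y) = \begin{bmatrix} P(x) + iP(y) \\ (x - P(x)) + i(y - P(y)) \end{bmatrix} = \begin{bmatrix} P_c(x+iy) \\ (x+iy) - P_c(x+iy) \end{bmatrix},
\end{equation*}
which is exactly $\nu_{P_c}^c(x+iy)$ under the identification $C_2(X)_c = C_2(X_c)$.

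Now the result is immediate from \cite[Theorem 3.1]{RuaC}: $\nu_P^c$ is a complete isometry if and only if $(\nu_P^c)_c = \nu_{P_c}^c$ is, which by definition says $P$ is a complete left $M$-projection on $X$ if and only if $P_c$ is a complete left $M$-projection on $X_c$. There is really no serious obstacle here; the only thing to be mildly careful about is the identification $C_2(X)_c \cong C_2(X_c)$ and checking that it intertwines the two column maps, but both steps are straightforward unpackings of the definitions of complexification given in Section 2.
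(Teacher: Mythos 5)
Your proposal is correct and follows essentially the same route as the paper: both arguments rest on the shuffle identification $C_2(X)_c\cong C_2(X_c)$ together with a computation showing that complexification intertwines the relevant column map, followed by Ruan's theorem that complexification preserves (and, by restriction, detects) the relevant complete metric property. The only cosmetic difference is that you work directly with the defining map $\nu_P^c:X\to C_2(X)$ and complete isometries, whereas the paper routes through the equivalent characterization via $\tau_P:C_2(X)\to C_2(X)$ and complete contractions from Theorem 5.8; both are sound.
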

\begin{proof}
We first note that $C_2(X_c)\cong C_2(X)_c$, completely isometrically, via the shuffling map 
$$\left [ \begin{array}{c}
\left[ \begin{array}{cc}  x_1 & -x_2 \\
                           x_2 & x_1 \end{array} \right] \\
\left [ \begin{array}{cc}  y_1 & -y_2\\
                           y_2 & y_1 \end{array} \right ]              
          \end{array}\right ]
       {\mapsto} \left [ \begin{array}{cr}
\left[ \begin{array}{c} x_1 
                        \\ y_1 \end{array} \right ]  & -\left[ \begin{array}{c} x_2 
                                                                               \\ y_2 \end{array} \right ]  \\
  \left[ \begin{array}{c} x_2 
                        \\ y_2 \end{array} \right ]  &  \left[ \begin{array}{c} x_1 
                                                                               \\ y_1 \end{array} \right ]  
                               \end{array}\right ].$$
Also, 
\begin{eqnarray*}
(\tau_P)_c\left( \left [ \begin{array}{c} x\\ v \end{array}\right ]  + i \left [ \begin{array}{c} y\\ w \end{array}\right ] \right )
&=&\left [ \begin{array}{c} P(x)\\ v \end{array}\right ] + i \left [ \begin{array}{c} P(y)\\ w \end{array}\right ] \\
&=&\left [ \begin{array}{c} P(x)+iP(y)\\ v+iw \end{array}\right ] \\
&=&\tau_{(P_c)}\left ( \left [ \begin{array}{c} x+iy\\ v+iw \end{array}\right ]  \right ).
\end{eqnarray*} 
If $P$ is a complete left $M$-projection, then by Theorem \ref{char}, $\tau_P$ and hence, $(\tau_P)_c$ is completely contractive. By the above $\tau_{(P_c)}$ is completely contractive and so, $P_c$ is a complete left $M$-projection. Conversely, if $P_c$ is a complete left $M$-projection, then $\tau_{(P_c)}$ is completely contractive. Since 
$\tau_{(P_c)}|_{C_2(X)}=\tau_P$, $\tau_P$ is a complete contraction and hence $P$ is a complete left $M$-projection, by Theorem \ref{char}.           
\end{proof}

\begin{corollary}\label{corMideal}
 A subspace $J$ in a real operator space $X$ is a right $M$-ideal if and only if $J_c$ is a (complex) right $M$-ideal in $X_c$.
\end{corollary}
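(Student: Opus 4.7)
My plan is to combine the preceding proposition (which matches real and complex complete left $M$-projections via complexification) with two identifications: $(X_c)^{**}\cong (X^{**})_c$ completely isometrically (Proposition \ref{dualcomp}) and $(J_c)^{\perp\perp}=(J^{\perp\perp})_c$ under this identification. The latter is a routine preliminary: since $J_c=J+iJ$, a functional $\phi+i\psi\in (X_c)^{*}=(X^*)_c$ annihilates $J_c$ iff both $\phi$ and $\psi$ vanish on $J$, giving $(J_c)^{\perp}=(J^{\perp})_c$, and a second application yields $(J_c)^{\perp\perp}=(J^{\perp\perp})_c$.

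The forward direction is then immediate. Given a complete left $M$-projection $P:X^{**}\to X^{**}$ with $P(X^{**})=J^{\perp\perp}$, the preceding proposition makes $P_c$ a complete left $M$-projection on $(X^{**})_c=(X_c)^{**}$ with range $P(X^{**})+iP(X^{**})=(J^{\perp\perp})_c=(J_c)^{\perp\perp}$, so $J_c$ is a (complex) right $M$-ideal.

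For the converse, suppose $Q$ is a complete left $M$-projection on $(X_c)^{**}$ with range $(J_c)^{\perp\perp}=(J^{\perp\perp})_c$. I would produce a real complete left $M$-projection by symmetrization. Define $\overline{Q}(z):=\overline{Q(\bar z)}$; since conjugation on $(X^{**})_c$ is a completely isometric antilinear involution and $(J^{\perp\perp})_c$ is conjugation-invariant, $\overline{Q}$ is again a complete left $M$-projection with the same range. Because both projections fix $(J^{\perp\perp})_c$ pointwise, the operator identities $Q\overline{Q}=\overline{Q}$ and $\overline{Q}Q=Q$ hold as maps, so ${\rm Re}(Q):=(Q+\overline{Q})/2$ is an idempotent complex linear map on $(X^{**})_c$ with range $(J^{\perp\perp})_c$. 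Since ${\rm Re}(Q)$ is fixed by conjugation, it restricts to a real linear idempotent $P:X^{**}\to X^{**}$ with $P_c={\rm Re}(Q)$ and $P(X^{**})=J^{\perp\perp}$.

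The main hurdle is to verify that $P$ is a complete left $M$-projection, equivalently (by the preceding proposition) that ${\rm Re}(Q)=P_c$ is one. Via Theorem \ref{char}, $Q$ is left multiplication by a self-adjoint projection $e\in\Al((X^{**})_c)$ and $\overline{Q}$ by $\bar e$; the operator identities above lift to $e\bar e=\bar e$ and $\bar e e=e$ in this $C^*$-algebra, using faithfulness of the multiplier action inherent in the realization $\Al((X^{**})_c)\subset pI(\mathcal{S}((X^{**})_c))p$ from Theorem \ref{multialg}. Then $(e+\bar e)/2$ is a self-adjoint idempotent in $\Al((X^{**})_c)$ that implements ${\rm Re}(Q)$, so Theorem \ref{char} identifies ${\rm Re}(Q)$ as a complete left $M$-projection. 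Alternatively, one can invoke the standard uniqueness of a complete left $M$-projection with prescribed range from one-sided $M$-ideal theory (see \cite{BEZ}) to conclude $Q=\overline{Q}$ outright, in which case $Q=P_c$ directly.
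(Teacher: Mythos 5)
Your proof is correct and follows essentially the same route as the paper: the forward direction via the complexified projection $P_c$ together with the identification $(J_c)^{\perp\perp}=(J^{\perp\perp})_c$, and the converse via the symmetrization ${\rm Re}(Q)=\frac{Q+\overline{Q}}{2}$. The only substantive difference is in checking that ${\rm Re}(Q)$ is a complete left $M$-projection: the paper argues directly with the $\tau$-maps (in effect, $\tau_{{\rm Re}(Q)}$ is an average of the completely contractive $\tau_Q$ and $\tau_{\overline{Q}}$), while you route through the adjointable multiplier algebra or the uniqueness of the left $M$-projection onto a given range --- both work, and your identities $e\bar e=\bar e$, $\bar e e=e$ together with selfadjointness in fact force $e=\bar e$, i.e.\ $Q=\overline{Q}$ outright.
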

\begin{proof}
Since $\left [ \begin{array}{cc} x_t & -y_t \\
                                  y_t & x_t  \end{array} \right ]$ converge ${\rm weak}^*$ in $(X_c)^{**}$ if and only if
both $(x_t)$ and $(y_t)$ converge ${\rm weak}^*$ in $X^{**}$, if $J\subset X$, 
then $(J_c)^{\perp\perp}=\overline{J_c}^{w^*}=(\overline{J}^{w^*})_c=(J^{\perp\perp})_c$. If $J$ is a real right $M$-ideal and if $P: X^{**}\longrightarrow J^{\perp\perp}$ is a (real) left $M$-projection, then by the above corollary $P_c: (X^{**})_c\longrightarrow (J^{\perp\perp})_c$ is a (complex) left $M$-projection. Let $Q$ be the induced map from 
$(X_c)^{**}$ onto $(J_c)^{\perp\perp}$. So the diagram

$$\xymatrix {(X^{**})_c \ar[r]^{P_c} \ar@{<->}[d] & (J^{\perp\perp})_c \ar@{<->}[d] \\
  (X_c)^{**} \ar[r]_Q & (J_c)^{\perp\perp} } $$
commutes and thus $Q$ is an idempotent. Also, since the diagram
 $$\xymatrix {C_2((X^{**})_c) \ar[r]^{\tau_{(P_c)}} \ar@{<->}[d]_{\rm c.i.} & C_2((X^{**})_c) \ar@{<->}[d]^{\rm c.i.} \\
  C_2((X_c)^{**}) \ar[r]_{\tau_Q} &  C_2((X_c)^{**})} $$
  commutes, and $\tau_{(P_c)}$ is a complete contraction, so $\tau_Q$ is a complete contraction. Hence $J_c$ is a right 
$M$-ideal in $X_c$. Conversely, if $P$ is a complete left $M$-projection from $(X_c)^{**}=(X^{**})_c$ onto $(J_c)^{\perp\perp}=(J^{\perp\perp})_c$, 
then let $Q={\rm Re}(P)$. Then a similar argument as in Lemma \ref{lemmainj} shows that $Q$ is an idempotent from 
$X^{**}$ onto $J^{\perp\perp}$. Also since $\tau_Q$ is the restriction of $\tau_{P}$ to $C_2(X^{**})$, $\tau_Q$ is completely contractive. Thus $J$ is a real right $M$-ideal.
\end{proof}

\begin{corollary}
 The right $M$-ideals in a real $C^*$-algebra $A$ are precisely the closed right ideals in $A$.
\end{corollary}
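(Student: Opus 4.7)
The plan is to reduce the statement to its complex analogue via Corollary \ref{corMideal}. Recall that if $A$ is a real $C^*$-algebra then its complexification $A_c = A + iA$ is a (complex) $C^*$-algebra, and the known complex analogue (cf.\ \cite{BEZ}) asserts that the right $M$-ideals in a complex $C^*$-algebra are precisely its closed right ideals.

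For the forward direction, suppose $J$ is a right $M$-ideal in $A$. By Corollary \ref{corMideal}, $J_c = J + iJ$ is a (complex) right $M$-ideal in the complex $C^*$-algebra $A_c$, and hence by the complex result $J_c$ is a closed right ideal in $A_c$. To recover that $J$ itself is a closed right ideal in $A$, I would observe that $J_c \cap A = J$: an element $j_1 + i j_2 \in J_c$ lies in $A$ only if $j_2 = 0$. Thus for $j \in J \subset J_c$ and $a \in A \subset A_c$ the product $ja$ lies in $J_c \cap A = J$, so $J$ is a closed right ideal of $A$.

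For the converse, suppose $J$ is a closed right ideal in $A$. A routine check shows that $J_c = J + iJ$ is a closed right ideal of $A_c$: it is closed because $J$ is, and for $j_1 + i j_2 \in J_c$ and $a_1 + i a_2 \in A_c$ we have
$$(j_1 + i j_2)(a_1 + i a_2) = (j_1 a_1 - j_2 a_2) + i(j_1 a_2 + j_2 a_1) \in J + iJ = J_c.$$
The complex characterization then yields that $J_c$ is a right $M$-ideal in $A_c$, and a second application of Corollary \ref{corMideal} gives that $J$ is a right $M$-ideal in $A$.

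Since the main tools, namely Corollary \ref{corMideal} and the complex version from \cite{BEZ}, are already available, I do not anticipate any genuine obstacle; the only thing to verify is that the complexification operation interchanges closed right ideals of $A$ with closed right ideals of $A_c$, which is elementary.
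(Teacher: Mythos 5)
Your proof is correct and follows exactly the route the paper intends: the corollary is stated immediately after Corollary \ref{corMideal} precisely because it reduces to the complex characterization of right $M$-ideals in $C^*$-algebras from \cite{BEZ} via the observation that complexification interchanges closed right ideals of $A$ with those of $A_c$. The details you supply (that $J_c\cap A=J$ and that $J+iJ$ is a closed right ideal of $A_c$ when $J$ is one of $A$) are the elementary verifications the paper leaves implicit.
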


\begin{corollary}
The right $M$-ideals in an approximately unital real operator algebra are precisely the closed right ideals with a left contractive approximate identity.
\end{corollary}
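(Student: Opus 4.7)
The plan is to reduce to the complex case via Corollary \ref{corMideal} and invoke the analogous theorem for complex operator algebras from \cite{BEZ}. The only nontrivial point is extracting a real left contractive approximate identity (cai) from a complex one, which is handled by the reasonable condition on the complexification.

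For the easier direction, suppose $J \subset A$ is a closed right ideal with a left cai $(e_\lambda)$. Then $J_c = J + iJ$ is a closed right ideal in $A_c$: for $x+iy \in J_c$ and $a+ib \in A_c$, the product $(x+iy)(a+ib) = (xa-yb) + i(xb+ya)$ lies in $J_c$ since $J$ absorbs products from $A$ on the right. Moreover, exactly as in the remark after the definition of the complexified product in Section 3, $(e_\lambda)$ is a left cai for $J_c$, with $\norm{e_\lambda}_{A_c} = \norm{e_\lambda + i0}_{A_c} = \norm{e_\lambda}_A \leq 1$. By the complex version of the result (\cite{BEZ}), $J_c$ is a right $M$-ideal in $A_c$, and Corollary \ref{corMideal} gives that $J$ is a right $M$-ideal in $A$.

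For the converse, suppose $J$ is a right $M$-ideal in $A$. By Corollary \ref{corMideal}, $J_c$ is a right $M$-ideal in $A_c$, and as noted in Section 3, $A_c$ is an approximately unital complex operator algebra. Applying the complex result, $J_c$ is a closed right ideal in $A_c$ with a left cai $(e_\lambda)$. Writing $e_\lambda = u_\lambda + iv_\lambda$ with $u_\lambda, v_\lambda \in J$, I claim $(u_\lambda)$ is a left cai for $J$. First, $J = J_c \cap A$ is a closed right ideal in $A$: if $j \in J$ and $a \in A$, then $ja \in A$ and $ja \in J_c$, so $ja \in J$. Next, the reasonable condition $\norm{x+iy} = \norm{x-iy}$ on $A_c$ yields the key inequality
\[
\norm{x}_A = \tfrac{1}{2}\norm{(x+iy) + (x-iy)}_{A_c} \leq \norm{x+iy}_{A_c}
\]
for any $x,y \in A$. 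Applied with $x = u_\lambda$ and $y = v_\lambda$, this gives $\norm{u_\lambda} \leq \norm{e_\lambda} \leq 1$. Applied with $x = u_\lambda z - z$ and $y = v_\lambda z$ for any $z \in J$, it gives $\norm{u_\lambda z - z} \leq \norm{e_\lambda z - z} \to 0$. Hence $(u_\lambda)$ is a left contractive approximate identity for $J$.

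The main (only) step requiring care is the extraction of the real cai from the complex one in the converse direction. Everything else is routine complexification bookkeeping combined with Corollary \ref{corMideal} and the known complex result.
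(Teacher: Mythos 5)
Your proof is correct and follows the route the paper intends: reduce to the complex case via Corollary \ref{corMideal}, apply the characterization of right $M$-ideals in approximately unital complex operator algebras from \cite{BEZ}, and transfer the left contractive approximate identity between $J$ and $J_c$ using the reasonable condition $\norm{x+iy}=\norm{x-iy}$. The extraction of the real cai via $\norm{x}\leq\norm{x+iy}$ is exactly the right bookkeeping, and the verification that $J=J_c\cap A$ is a right ideal is sound.
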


Note that
by  Corollary \ref{corMideal}, and  Proposition \ref{dualcomp}, it is clear that
$X$ is right $M$-ideal in $X^{**}$ if and only if $X_c$ is right $M$-ideal in $(X_c)^{**}$.

We say that a real operator space $X$ is {\bf right $M$-embedded} if $X$ is a right $M$-ideal in $X^{**}$. 
Thus by the above lines it is clear that a real operator space $X$ is right $M$-embedded if and only if $X_c$ is right $M$-
embedded. 
 
\begin{lemma} Let $X$ be a real operator space and $Y\subset X$. Then $(X/Y)_c\cong X_c/Y_c$, completely isometrically.
\end{lemma}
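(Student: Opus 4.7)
The plan is to show that the complexification of the real quotient map $q:X\to X/Y$ yields a completely isometric isomorphism between $X_c/Y_c$ and $(X/Y)_c$. First I would note that $q$ is a complete quotient map of real operator spaces (in the sense of the remark ending Section 2), so by \cite[Theorem 3.1]{RuaC} its complexification $q_c:X_c\to (X/Y)_c$ is a complete contraction. It is clearly surjective, and its kernel is $Y_c$, since $q(x)+iq(y)=0$ forces $x,y\in Y$. Hence $q_c$ factors through a bijective completely contractive complex-linear map $\tilde{q}:X_c/Y_c\to (X/Y)_c$.

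The crux is to upgrade $\tilde{q}$ to a complete isometry, equivalently to show that $q_c$ is itself a complete quotient map. Fix $[z_{kl}]=[q(x_{kl})+iq(y_{kl})]\in M_n((X/Y)_c)$ with $\|[z_{kl}]\|<1$ and $\epsilon>0$. Under the identification $M_n((X/Y)_c)\hookrightarrow M_{2n}(X/Y)$ coming from the definition of the complexification, $[z_{kl}]$ corresponds to $\Omega=\left[\begin{smallmatrix}[q(x_{kl})] & -[q(y_{kl})]\\ [q(y_{kl})] & [q(x_{kl})]\end{smallmatrix}\right]$, of norm less than $1$ in $M_{2n}(X/Y)$. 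Since $q$ is a complete quotient map, $\Omega$ lifts to some $\Xi=\left[\begin{smallmatrix}X_1 & X_2\\ X_3 & X_4\end{smallmatrix}\right]\in M_{2n}(X)$ with $\|\Xi\|_{M_{2n}(X)}<1+\epsilon$. I would then symmetrize: set $X'=(X_1+X_4)/2$ and $Y'=(X_3-X_2)/2$ in $M_n(X)$. A direct block-by-block check shows that $(q_c)_n(X'+iY')=[z_{kl}]$, and the ambient representation of $X'+iY'$ equals $\left[\begin{smallmatrix}X' & -Y'\\ Y' & X'\end{smallmatrix}\right]=\tfrac12(\Xi+J\Xi J^{-1})$, where $J=\left[\begin{smallmatrix}0 & -I_n\\ I_n & 0\end{smallmatrix}\right]\in M_{2n}(\mathbb{R})$ is orthogonal.

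Since $\|J\|=\|J^{-1}\|=1$, axiom (ii) in the definition of a real operator space (applied at level $2n$) gives $\|J\Xi J^{-1}\|_{M_{2n}(X)}=\|\Xi\|_{M_{2n}(X)}$, so the triangle inequality yields $\|X'+iY'\|_{M_n(X_c)}\leq \|\Xi\|_{M_{2n}(X)}<1+\epsilon$. Thus $q_c$ is a complete quotient map, and $\tilde{q}$ is the desired complete isometric isomorphism. The main obstacle is that an arbitrary lift $\Xi\in M_{2n}(X)$ of $\Omega$ need not have the special ``complexification'' block form $\left[\begin{smallmatrix}X' & -Y'\\ Y' & X'\end{smallmatrix}\right]$ required to come from an element of $M_n(X_c)$; the symmetrization $\tfrac12(\Xi+J\Xi J^{-1})$, combined with the isometric invariance under conjugation by the real orthogonal matrix $J$, is precisely the trick that converts an arbitrary lift into one of this form without inflating its norm.
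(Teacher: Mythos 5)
Your proof is correct and follows the same overall strategy as the paper: complexify the quotient map $q:X\to X/Y$, note that $q_c$ is a completely contractive surjection with kernel $Y_c$, and then show $q_c$ is a complete quotient map by lifting matrices of norm $<1$. The one place where you go beyond the paper is precisely the step you flag as the main obstacle. The paper's own proof simply asserts that since $\phi_2:M_2(X)\to M_2(X/Y)$ is a quotient map, the element $\left[\begin{smallmatrix} x_1+Y & x_2+Y\\ -x_2+Y & x_1+Y\end{smallmatrix}\right]$ of norm $<1$ lifts to an element of the form $\left[\begin{smallmatrix} x_1' & x_2'\\ -x_2' & x_1'\end{smallmatrix}\right]$ of norm $<1$, without addressing the fact that a generic lift in $M_2(X)$ has four unrelated blocks. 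Your symmetrization $\tfrac12(\Xi+J\Xi J^{-1})$ with the orthogonal matrix $J=\left[\begin{smallmatrix}0 & -I_n\\ I_n & 0\end{smallmatrix}\right]$, together with Ruan's axiom (ii) to control the norm, is exactly what is needed to repair this, and your block computation (diagonal average $X'$, off-diagonal antisymmetrization $Y'$) correctly produces an element of $M_n(X_c)$ mapping to $[z_{kl}]$ with norm at most $\|\Xi\|$. So your write-up is, if anything, more complete than the paper's; the only cosmetic remark is that axiom (ii) directly gives $\|J\Xi J^{-1}\|\leq\|\Xi\|$, which is all the triangle inequality requires (equality follows by applying the axiom a second time with $J^{-1}$).
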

\begin{proof}
Let $\phi:X\longrightarrow X/Y$ be the canonical complete quotient map. We claim that
$\phi: X_c\longrightarrow (X/Y)_c$ given by $x_1+ix_2 \mapsto (x_1+Y)+i (x_2+Y)$ is a complete quotient. Since $\phi$ is a complete contraction, so is $\phi_c$, by \cite[Theorem 2.1]{RuaC}. Thus if we denote the open unit ball of an operator space $Z$ by $\mathcal{U}_Z$, then $\phi_c(\mathcal{U}_{X_c})\subset \mathcal{U}_{(X/Y)_c}$. Let $(x_1+Y)+i (x_2+Y)\in \mathcal{U}_{(X/Y)_c}$, then $\mnorm{\begin{array}{cc} x_1+Y  & x_2 +Y \\ -x_2 +Y  & x_1 +Y\end{array}}< 1$. Since $\phi$ is a complete quotient, $\phi_2: M_2(X)\longrightarrow M_2(X/Y)$ is a quotient and $\phi: \left[\begin{array}{cc} x_1 &  x_2\\ -x_2  & x_1 \end{array}\right]\mapsto \left[\begin{array}{cc} x_1 +Y &  x_2+Y\\ -x_2+Y  & x_1+Y \end{array}\right]$. Thus $\norm{x_1+ix_2}< 1$. Similar argument at each matrix level proves the claim. Since \text{Ker}$(\phi_c)=Y_c$, $(X/Y)_c\cong X_c/Y_c$ completely isometrically.   
\end{proof}

\begin{theorem} Let $X$ be a real right $M$-embedded operator space and $X\subset Y$, then $Y$ and $X/Y$ are right $M$-embedded. 
\end{theorem}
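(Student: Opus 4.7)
The plan is to reduce everything to the already-known complex one-sided $M$-embedded theory from \cite{Sha2} via complexification. (I read the hypothesis as $Y\subset X$, so that the quotient $X/Y$ makes sense.) The three tools I will combine are: the equivalence ``$Z$ is real right $M$-embedded iff $Z_c$ is complex right $M$-embedded'' (established in the discussion just before the statement, using Corollary \ref{corMideal} together with Proposition \ref{dualcomp} which identifies $(Z_c)^{**}=(Z^{**})_c$); the complete isometric identification $(X/Y)_c\cong X_c/Y_c$ from the preceding lemma; and the complex analogue of the present theorem from \cite{Sha2}, which says that subspaces and quotients of complex right $M$-embedded operator spaces are again right $M$-embedded.

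First I would set up: since $X$ is real right $M$-embedded, the complexification $X_c$ is complex right $M$-embedded. For the subspace assertion, observe that $Y_c$ sits inside $X_c$ as a (complex) closed subspace in the canonical way, so the complex theorem from \cite{Sha2} applies and gives that $Y_c$ is right $M$-embedded in $(Y_c)^{**}=(Y^{**})_c$. Applying the real/complex equivalence again (this time in the reverse direction), $Y$ is real right $M$-embedded.

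For the quotient assertion, I would invoke the lemma immediately preceding the theorem to identify $(X/Y)_c$ with $X_c/Y_c$ completely isometrically. The complex theorem applied to the pair $Y_c\subset X_c$ then yields that $X_c/Y_c$ is complex right $M$-embedded, hence so is $(X/Y)_c$. One more application of the equivalence shows that $X/Y$ is real right $M$-embedded, completing the proof.

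The step that needs the most care is verifying that the complex right $M$-embeddedness really does transfer back to the real space in both directions; but this is precisely the content noted between Corollary \ref{corMideal} and the statement: the equivalence $J\subset X$ is a real right $M$-ideal iff $J_c\subset X_c$ is a complex right $M$-ideal, applied with $J=X$ inside $X^{**}$ (respectively $J=Y$ inside $Y^{**}$), together with the fact that complexification commutes with second duals. No new computation is required beyond citing these. I do not foresee a genuine obstacle, since all the heavy lifting (the real Paulsen-type constructions, the $M$-ideal/complexification dictionary, and the quotient/complexification compatibility) has already been carried out earlier in the paper.
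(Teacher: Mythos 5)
Your proposal is correct and follows essentially the same route as the paper's own proof: complexify, apply Theorem 2.6 of \cite{Sha2} to $Y_c\subset X_c$, use the lemma $(X/Y)_c\cong X_c/Y_c$, and transfer back via the equivalence between real and complex right $M$-embeddedness. You also correctly read the hypothesis as $Y\subset X$ (the statement's ``$X\subset Y$'' is evidently a typo, as the paper's own proof confirms by writing $Y_c\subset X_c$).
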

\begin{proof}
Since $Y_c\subset X_c$, by \cite[Theorem 2.6]{Sha2}, $Y_c$ is right $M$-embedded and hence $Y$ is right $M$-embedded. 
Again by \cite[Theorem 2.6]{Sha2}, $X_c/Y_c$ is right $M$-embedded. By the above lemma, $(X/Y)_c$ is right $M$-embedded, and thus, $X/Y$ is right $M$-embedded. 
\end{proof}

\begin{lemma} Let $X_k$ be real operator spaces, then $(\oplus_k^{\circ}X_k)_c\cong \oplus_k^{\circ}(X_k)_c$ completely isometrically.
\end{lemma}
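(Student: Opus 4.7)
The plan is to exhibit the natural identification
\[
\phi:(\oplus_k^{\circ}X_k)_c\longrightarrow \oplus_k^{\circ}(X_k)_c,\qquad (x_k)_k+i(y_k)_k\mapsto (x_k+iy_k)_k,
\]
as a complete isometry at every matrix level. First I would check that $\phi$ is a well-defined bijection: if $(x_k),(y_k)$ lie in $\oplus_k^{\circ}X_k$, then $(x_k+iy_k)_k$ satisfies the appropriate $c_0$-type decay condition because the complexification norm is bounded above by $2\max(\|x_k\|,\|y_k\|)$; conversely, taking real and imaginary parts is contractive on each $(X_k)_c\to X_k$ and so preserves the decay. Linearity over $\mathbb{R}$ is obvious, and $\phi$ is $\mathbb{C}$-linear.

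To verify the complete isometry property at level $n$, I would fix $[z_{ij}]=[x_{ij}]+i[y_{ij}]\in M_n((\oplus_k^{\circ}X_k)_c)$ with $x_{ij}=(x_{ij,k})_k$ and $y_{ij}=(y_{ij,k})_k$, and apply the $2\times 2$ realization of the complexification norm (as in the identification following (\ref{eq2})):
\[
\mnorm{[z_{ij}]}_{M_n((\oplus_k^{\circ}X_k)_c)} \;=\; \mnorm{\begin{array}{cc} [x_{ij}] & -[y_{ij}] \\ [y_{ij}] & [x_{ij}] \end{array}}_{M_{2n}(\oplus_k^{\circ}X_k)}.
\]
The defining property of $\oplus_k^{\circ}$ is that the $M_{2n}$-norm equals the supremum over $k$ of the $M_{2n}(X_k)$-norms of the $k$-th coordinate projections. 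Taking these projections componentwise gives
\[
\sup_k\,\mnorm{\begin{array}{cc} [x_{ij,k}] & -[y_{ij,k}] \\ [y_{ij,k}] & [x_{ij,k}] \end{array}}_{M_{2n}(X_k)}.
\]

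Applying the $2\times 2$ formula again, this time in the opposite direction for each $X_k$, each summand equals $\mnorm{[x_{ij,k}+iy_{ij,k}]}_{M_n((X_k)_c)}$, so the expression becomes
\[
\sup_k\,\mnorm{[x_{ij,k}+iy_{ij,k}]}_{M_n((X_k)_c)} \;=\; \mnorm{\phi_n([z_{ij}])}_{M_n(\oplus_k^{\circ}(X_k)_c)},
\]
which is exactly the matrix norm on the $c_0$-sum of complexifications. This yields the complete isometry.

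There is no serious obstacle here; the argument is essentially bookkeeping. The only point requiring care is to confirm that the ``$c_0$-type'' condition defining $\oplus^{\circ}$ transfers along $\phi$ and $\phi^{-1}$ (handled by the two-sided norm estimates between $x_k+iy_k$ and its real and imaginary parts), and that the supremum over coordinates commutes with the $2\times 2$ realization of the complexification, which is immediate since both are pointwise-in-$k$ constructions.
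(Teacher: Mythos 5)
Your proposal is correct and follows essentially the same route as the paper: the same canonical map, the same two-sided norm estimates between $(x_k)+i(y_k)$ and $(x_k+iy_k)$ to handle well-definedness and surjectivity, and the same interchange of the $2\times 2$ realization of the complexification norm with the coordinatewise supremum defining the $c_0$-sum. The only cosmetic difference is that you carry out the computation at a general matrix level $n$ from the start, where the paper does level one explicitly and appeals to "a similar argument at each matrix level."
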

\begin{proof}
Let $\phi: (\oplus_k^{\circ}X_k)_c\longrightarrow \oplus_k^{\circ}(X_k)_c$ be the canonical map $(x_k)+i(y_k)\mapsto (x_k+iy_k)$. Since $\norm{(x_k)}, \norm{(y_k)}\leq \norm{(x_k+iy_k)}\leq \norm{(x_k)}+\norm{(y_k)}$, $\phi$ is well defined and onto. Also, 
\begin{eqnarray*}
\norm{(x_k)+i(y_k)}&=& \mnorm{\begin{array}{cc}(x_k) & (y_k)\\ -(y_k)& (x_k) \end{array}}_{M_2(\oplus_k^{\infty}X_k)}\\
&=& \norm {\left ( \left [ {\begin{array}{cc} x_k & y_k \\ -y_k & x_k \end{array}}\right ]\right )}_{\oplus_k^{\infty}M_2(X_k)}=\norm{(x_k+iy_k)}_{\oplus_k^{\infty}(X_k)_c}.
\end{eqnarray*}  
Thus, $\phi$ is an isometry. Similar argument at each matrix level shows that $\phi$ is a complete isometry.
\end{proof}

The following result follows from above the lemma, \cite[Proposition 2.12]{Sha2}, and the fact that a real operator space $X$ is right $M$-embedded if and only if $X_c$ is right $M$-embedded.

\begin{theorem} Let $A$ be a real right $M$-embedded TRO, then $A\cong \oplus_{i,j} ^{\circ}\mathbb{K}(H_i, H_j)$ completely isometrically,  for some real Hilbert spaces $H_i$, $H_j$.
\end{theorem}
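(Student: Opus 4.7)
The plan is to reduce to the complex classification of right $M$-embedded TROs via complexification, and then descend using the conjugation on $A_c$ whose fixed-point set is $A$.

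First, I would observe that since $A$ is a real TRO, its complexification $A_c = A + iA$ is naturally a complex TRO: the triple product $[x,y,z] = xy^{\star}z$ on $A$ extends to a well-defined complex triple product on $A_c$, and from the concrete realization $A \subset B(K,H)$ we get $A_c \subset B(K_c, H_c)$. By the discussion immediately preceding the statement, $A$ is right $M$-embedded if and only if $A_c$ is right $M$-embedded, so $A_c$ is a complex right $M$-embedded TRO. Applying the complex classification \cite[Proposition 2.12]{Sha2} gives a complete isometric ternary isomorphism
\[ \Phi: A_c \;\xrightarrow{\cong}\; \bigoplus_{(i,j)\in I}^{\circ} \mathbb{K}(K_i, K_j) \]
for some complex Hilbert spaces $\{K_i\}$.

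Next, transport the conjugation ``$-$'' from $A_c$ across $\Phi$ to a conjugate-linear triple automorphism $\sigma = \Phi \circ (-) \circ \Phi^{-1}$ of the $c_0$-sum $B := \bigoplus_{(i,j)}^{\circ} \mathbb{K}(K_i,K_j)$, whose fixed-point set is ternary-isomorphic to $A$. Since each summand $\mathbb{K}(K_i,K_j)$ is a minimal ternary ideal of $B$ (these are exactly the minimal closed ternary ideals of the $c_0$-sum), $\sigma$ must permute the summands by an involution $\tau$ of the index set $I$. Two cases arise: either $(i,j)$ is a fixed point of $\tau$, so $\sigma$ restricts to a conjugate-linear triple automorphism of $\mathbb{K}(K_i, K_j)$; or $\tau$ swaps $(i,j)$ with $(i',j')$, and $\sigma$ exchanges $\mathbb{K}(K_i,K_j)$ with $\mathbb{K}(K_{i'},K_{j'})$.

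For the swapped case, the fixed-point set of $\sigma$ restricted to $\mathbb{K}(K_i,K_j) \oplus^{\circ} \mathbb{K}(K_{i'},K_{j'})$ is the graph $\{(a,\sigma(a))\}$, which is ternary-isomorphic as a real TRO to $\mathbb{K}(K_i,K_j)$ viewed as a real operator space; a complex Hilbert space viewed as a real Hilbert space is a real Hilbert space of doubled dimension, so this is $\mathbb{K}(H_i, H_j)$ for suitable real Hilbert spaces. For the fixed case, conjugate-linear triple automorphisms of $\mathbb{K}(K_i,K_j)$ are classified (each arises from a pair of anti-unitaries $u \colon K_i \to K_i$, $v \colon K_j \to K_j$ with $u^2 = \pm 1$, $v^2 = \pm 1$, via $a \mapsto v \bar a u^{-1}$), and the fixed-point set is a real TRO of the form $\mathbb{K}(H_i, H_j)$ where $H_i, H_j$ are the real Hilbert spaces realized as fixed points of the relevant real structures on $K_i, K_j$.

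Finally, I would assemble the pieces using the preceding lemma $\left(\oplus_k^{\circ} X_k\right)_c \cong \oplus_k^{\circ}(X_k)_c$ applied in reverse: the fixed points of $\sigma$ on the $c_0$-sum form the $c_0$-sum of the fixed-point pieces, yielding $A \cong \bigoplus_{(i,j)}^{\circ} \mathbb{K}(H_i, H_j)$ completely isometrically. The main obstacle is the case-analysis in the third step, specifically verifying that a conjugate-linear triple involution of the complex compact-operator TRO $\mathbb{K}(K,L)$ always has a real fixed-point TRO of the form $\mathbb{K}(H_K, H_L)$; this requires understanding real structures (real forms) on the ambient complex Hilbert spaces, and is the only step that goes beyond invoking the complex theorem and the lemma directly.
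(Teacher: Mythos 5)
Your strategy is the same as the paper's: the paper disposes of this theorem in one line, citing the lemma $(\oplus_k^{\circ}X_k)_c\cong \oplus_k^{\circ}(X_k)_c$, the complex classification \cite[Proposition 2.12]{Sha2}, and the equivalence ``$X$ right $M$-embedded $\iff$ $X_c$ right $M$-embedded.'' You go further than the paper by actually attempting the descent from $A_c\cong \oplus^{\circ}\mathbb{K}(K_i,K_j)$ back to $A$, and your reduction steps (complexify, transport the conjugation to a period-two conjugate-linear triple automorphism $\sigma$, observe that $\sigma$ permutes the ternary-simple summands, reassemble with the $c_0$-sum lemma) are all sound. The problem is exactly the step you flag as ``the main obstacle'': the claim that in each case the fixed-point set is $\mathbb{K}(H,H')$ for \emph{real} Hilbert spaces $H,H'$. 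That claim is false, and as written your case analysis contains a genuine error rather than a fillable gap.

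Concretely: in the swapped case you identify the graph $\{(a,\sigma(a))\}$ with ``$\mathbb{K}(K_i,K_j)$ viewed as a real operator space'' and then assert this equals $\mathbb{K}(H_i,H_j)$ for real Hilbert spaces because ``a complex Hilbert space viewed as a real Hilbert space is a real Hilbert space of doubled dimension.'' This conflates $\mathbb{K}(K)$ regarded as a real TRO with $\mathbb{K}(K_r)$: the complex-linear compact operators form a proper real sub-TRO of the real-linear ones (for $K=\mathbb{C}$ this is $\mathbb{C}$ inside $M_2(\mathbb{R})$). The real TRO $\mathbb{C}$ (with triple product $x\bar y z$) is \emph{not} ternary isomorphic, hence by the uniqueness of the triple product not completely isometric, to any $\oplus_{i,j}^{\circ}\mathbb{K}(H_i,H_j)$ with $H_i,H_j$ real: in real dimension $2$ the only candidates are $M_{1,2}(\mathbb{R})$, $M_{2,1}(\mathbb{R})$ and $\mathbb{R}\oplus_{\infty}\mathbb{R}$, and none of these has the triple-product (or matrix-norm) structure of $\mathbb{C}$. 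Similarly, in the fixed case the anti-unitaries $u,v$ may satisfy $u^2=-1$ (so there is no real form of $K_i$ at all), and the fixed-point TRO is then quaternionic, e.g.\ $\mathbb{H}\subset M_2(\mathbb{C})$, which again is not of the form $\mathbb{K}(H,H')$ over $\mathbb{R}$. So a correct version of your descent yields a $c_0$-sum of spaces $\mathbb{K}(H,K)$ where $H,K$ are Hilbert spaces over $\mathbb{R}$, $\mathbb{C}$, or $\mathbb{H}$ (exactly parallel to the classification of real type I factors), not over $\mathbb{R}$ alone; the extra types cannot be ruled out, since $A=\mathbb{C}$ is itself a finite-dimensional (hence right $M$-embedded) real TRO realizing the complex type. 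In short, the step you identified as the obstacle is not a technicality to be checked but the point where the argument -- and, as literally stated, the conclusion -- breaks down; the paper's one-line proof silently passes over the same issue.
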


\subsection*{Acknowledgments}

We thank Dr.\ David Blecher, for proposing this project and continually supporting the work. We are grateful for his insightful comments and many suggestions and corrections.

\end{document}